\newtheorem{theorem}{Theorem}[section]
\newtheorem{lemma}{Lemma}[section]
\newtheorem{definition}{Definition}[section]
\newtheorem{corollary}{Corollary}[section]
\newtheorem{proposition}{Proposition}
\newtheoremstyle{myremarkstyle}
{3pt} 
{3pt} 
{\normalfont} 
{} 
{\bfseries} 
{.} 
{.5em} 
{} 
\theoremstyle{myremarkstyle}
\newtheorem{remark}{Remark}[section]
\newtheorem{example}{Example}[section]
\numberwithin{equation}{section}
\journal{Advances in Mathematics}
\begin{document}

\begin{frontmatter}



\title{Generalized singular value decompositions of dual quaternion matrices and beyond}


\author[1]{Sitao Ling} 
\ead{lingsitao2004@163.com}

\author[2]{Wenxuan Ma\corref{cor1}}
\ead{marcoper2013@gmail.com}

\author[3]{Musheng Wei}
\ead{mwei@shnu.edu.cn}
\address[1]{School of Mathematics, Jiangsu Center for Applied Mathematics at CUMT,\\
	China University of Mining and Technology, Xuzhou, China}
\address[2]{School of Mathematics, China University of Mining and Technology, Xuzhou, China}
\address[3]{College of Mathematics and Science, Shanghai Normal University, Shanghai China}

\cortext[cor1]{Corresponding author}

\begin{abstract}
In high-dimensional data processing and data analysis related to dual quaternion statistics, generalized singular value decomposition  (GSVD) of a dual quaternion matrix pair is an essential numerical linear algebra tool for an elegant problem formulation and numerical implementation. In this paper, building upon the existing singular value decomposition (SVD) of a dual quaternion matrix, we put forward several types of GSVD of dual quaternion data matrices in accordance with their dimensions. Explicitly, for a given dual quaternion matrix pair $\{{\boldsymbol A}, {\boldsymbol B}\}$, if ${\boldsymbol A}$ and ${\boldsymbol B}$ have the same number of columns, we investigate two forms of their quotient-type SVD (DQGSVD) through different strategies, which can be selected to use in different scenarios. Three artificial examples are presented to illustrate the principle of the DQGSVD.

Alternatively, if ${\boldsymbol A}$ and ${\boldsymbol B}$ have the same number of rows, we consider their canonical correlation decomposition (DQCCD). If ${\boldsymbol A}$ and ${\boldsymbol B}$ are consistent for dual quaternion matrix multiplication, we present their product-type SVD (DQPSVD). As a preparation, we also study the QR decomposition of a dual quaternion matrix based on the dual quaternion Householder transformation, and introduce the CS decomposition of an 2-by-2 blocked unitary dual quaternion matrix. Due to the peculiarity of containing dual part for dual quaternion matrices, the obtained series of GSVD of dual quaternion matrices dramatically distinguish from those in the real number field, the complex number field, and even the quaternion ring, but can be treated as an extension of them to some extent.
\end{abstract}



\begin{keyword}
Generalized singular value decomposition\sep Dual quaternion matrix, Householder transformation\sep QR decomposition\sep CS decomposition

\MSC[2020]{15A18, 15B33, 65F15}

\end{keyword}

\end{frontmatter}



\section{Introduction}

Dual quaternion is an extended algebraic structure of dual number that originally introduced by British mathematician William Kingdon Clifford in 1873. With the aid of dual quaternions, the researchers are able to significantly improve the efficiency of computation and optimize graphic rendering and animation effects. In particular, when dealing with 3D rotation and interpolation operations, using dual quaternions can effectively avoid the gimbal lock problem, improve numerical stability, and achieve smooth transition effects. Dual quaternions and the matrices of dual quaternions play an important role in the hand-eye calibration problem \cite{s1}, the simultaneous localization and mapping (SLAM) problem, and the spacecraft position tracking problem. In summary, they have been widely used in various areas \cite{s7,s4,s3,s2,s6,s1}, such as neuroscience, robotics, computer graphics, and multi-agent formation control \cite{QWL2023}. 

Recent emerging researches about dual quaternion matrix problem concentrate on their relevant algebraic properties and numerical computations. For example, Qi, Ling and Yan \cite{s15} proposed a method for describing the magnitude of a dual quaternion and defined the norms of dual quaternion vectors. Qi and Luo \cite{s12} studied the spectral theory of dual quaternion matrices and provided the unitary decomposition of a dual quaternion Hermitian matrix. Later on, Cui and Qi \cite{CJSC2024} derived a power method for computing the dominant eigenvalue of dual quaternion Hermitian matrices. With regard to the general form of a dual quaternion, it contains the infinitesimal part and it does not obey noncommutative law for multiplication, which are the major obstacles for us to set up theoretical analysis of dual quaternion matrices, and some properties about complex matrices or quaternion matrices fall down for dual quaternion matrices.

Low-rank approximation is an efficient method for data compression while dealing with large scale hypercomplex matrices. Singular value decomposition (SVD) is an essential tool for the low-rank approximation problem. The SVD of a quaternion matrix was first introduced in \cite{Zhang1997}. Real or complex structure preserving orthogonal transformation algorithms \cite{Li2014,DZhang2024} and Lanczos iteration algorithm \cite{Jia2019} were designed for its computation. Recently, Liu, Ling and Jia \cite{LLJ2022} presented the randomized SVD algorithm for low-rank quaternion matrix approximation and applied it to color face recognition.  In the dual complex number system, Wei, Ding and Wei \cite{s9} proposed the compact dual singular value decomposition (CDSVD) of dual complex matrices, from which they reported the optimal solution to the best rank-$k$ approximation under a newly defined quasi-metric, and developed an approach to recognize traveling waves in the brain. Based on the SVD of a general dual quaternion matrix \cite{s12}, Ling, He and Qi \cite{s11} established Eckart-Young like theorem for dual quaternion matrices and characterized the optimal low-rank approximation within a given subspace.
Through the SVD of dual quaternion data matrices, we can also afford the tasks of data dimension reduction and feature extraction. This is very helpful for processing large-scale data sets, discovering the main features in the data and improving the computational efficiency.

As a generalization of the SVD for one matrix, the generalized singular value decomposition (GSVD) of a matrix pair is very useful in discriminant analysis for information retrieval systems \cite{Howland}. It can be used in multidimensional machine condition monitoring problems for decision making in a multidimensional case \cite{Cempel}, and DNA copy in biology engineering \cite{Bradley2019}. It can also be applied to solve for the generalized eigenvalue decomposition problem, such a problem arises in certain data science settings where we are interested in extracting the most discriminative information from one dataset of particular interest relative to the other \cite{Chen2019}. The theory of GSVD of quaternion matrices was originally presented in \cite{jiang2006}. Explicitly, for the given ${\mathbf A}$ $\in {\bf \mathbb{Q}}^{m_a\times n}$ and ${\mathbf B}$ $\in {\bf \mathbb{Q}}^{m_b\times n}$ with $m_a\ge n$, there exist unitary matrices ${\mathbf U}$, ${\mathbf V}$ and a nonsingular matrix ${\mathbf X}$ such that 
\begin{equation*}
	\begin{split}
		{\mathbf U}^{*}{\mathbf A}{\mathbf X}&=D_A=\begin{pmatrix}
			I&0&0\\
			0&{\rm diag}(\alpha_{1}, \ldots, \alpha_{s})&0\\
			0&0&0
		\end{pmatrix}, \\
		{\mathbf V}^{*}{\mathbf B}{\mathbf X}&=D_B=\begin{pmatrix}
			I&0&0\\
			0&{\rm diag}(\beta_{1}, \ldots, \beta_{s})&0\\
			0&0&0
		\end{pmatrix},
	\end{split}
\end{equation*}
where $I$ denotes the identity matrix of suitable size, and the real numbers $\alpha_i, \beta_i$ satisfy 
$$1>\alpha_1\ge \alpha_2\ge\cdots\ge \alpha_s>0, \  \  0<\beta_1\le \beta_2\le\cdots\le \beta_s<1,$$ 
$${\alpha_{i}}^2+{\beta_{i}}^2=1, \  i=1,2,\ldots,s.$$
Recently, the joint Lanczos bidiagonalization method with thick-restart was proposed for the partial GSVD of a quaternion matrix pair \cite{ZHu2024}. Obviously, the SVD and the GSVD of quaternion matrices share the same form as those of real matrices. But the SVD of dual quaternion matrices is different from that of real matrices and quaternion matrices \cite{s17}. This prompts us to delve deeper into the question: are there any differences in form between the GSVD of dual quaternion matrices (DQGSVD) and that of real matrices, thereby revealing their unique properties and characteristics within different algebraic structures? However, the current research on the DQGSVD remains unexplored. Therefore, we aim to commence with fundamental principles of matrix decomposition and delve into discussions regarding the DQGSVD. In this paper, building upon the existing SVD of a dual quaternion matrix, we put forward several types of GSVD of dual quaternion data matrices in accordance with their dimensions.

Let us make a short review of different types of GSVD for real matrices.
If the real matrices $A$ and $B$ have the same number of columns, the well known quotient SVD (QSVD) was originally introduced in \cite{s13} and further developed in \cite{page1981, Zha1996}. 
Efficient algorithms for the computation of QSVD contains the CS decomposition plus the Lanczos bidiagonalization process \cite{Zha1996} and the Kogbetliantz algorithm \cite{paige1986} for small and medium scale problem, as well as the joint Lanczos bidiagonalization for large scale problem \cite{h6, h25}. If the matrices $A$ and $B$ are consistent for matrix multiplication, product-type SVD (PSVD) was discussed in \cite{Fernando, Heath}, high relative accuracy Jacobi-type algorithm for the PSVD was developed in \cite{Drmac1998}. The restricted SVD (RSVD) is a simultaneous decomposition of a matrix triplet $\{A, B, C\}$ with compatible dimensions to quasi-diagonal forms. An implicit Kogbetliantz algorithm was proposed for the computation of the RSVD \cite{s20}, followed by Jacobi-type iteration and nonorthonormal transformations \cite{Drmac2000}, and QR-type algorithm based on the CS decomposition \cite{Chu2000}. Recent emerged two neural network models for finding approximations of the GSVD and the RSVD illustrated their efficiency \cite{Zhang2021}.
For the relationship between SVD, GSVD and RSVD we refer to \cite{s19}.

For the given two real matrices $A$ and $B$ having the same number of rows, the canonical correlations between the range spaces spanned by their column vectors have enormous applications, such as neuroscience, machine learning, and bioinformatics \cite{cao2015}. Numerical algorithms using the QR decomposition and SVD together with a first order perturbation analysis for computing the canonical correlations came with the literature \cite{s28}.
Golub and Zha \cite[Theorem 2.1]{GolubZha1994} derived canonical correlation decomposition (CCD) theorem for the given real matrix pair, which can not only explicitly exhibit the canonical correlations of the matrix pair, but also reveal some of its other intrinsic structures. 
Subsequently, the CCD decomposition was well used to find the least-squares solution \cite{XuWei1998} of the matrix equation $AXB+CYD=E$, and the optimal approximate solution in its least-squares solution set \cite{LiaoBai2005}.

In this paper, we study the DQGSVD and relevant theorems over dual quaternion ring.
In \cref{section2}, we introduce some fundamental knowledge about dual quaternions and dual quaternion matrices. 
In \cref{section3}, we present Householder transformation, QR decomposition of dual quaternion matrices and CS decomposition of unitary  dual quaternion matrices.
In \cref{section4}, we introduce two types of the quotient-type SVD for dual quaternion matrices, and further investigate the PSVD, and CCD of dual quaternion matrices.
In \cref{examp5}, we present three artificially toy examples to verify the principle of different quotient-type SVD of the dual quaternion matrices.
In \cref{section6} we conclude the paper by pointing out the differences between two forms of DQGSVD, as well as the differences between the DQSVD and the SVD of complex matrices and quaternion matrices.

\section{Dual quaternions and dual quaternion matrices}\label{section2}

Denote the sets of real numbers, dual numbers, quaternions and dual quaternions by ${\bf \mathbb{R}}$, ${\bf \mathbb{D}}$, ${\bf \mathbb{Q}}$ and ${\bf \mathbb{DQ}}$, respectively. A dual number $\mathsf q$ has the form ${\mathsf q}=q_{st}+q_{in}\epsilon$, where $q_{st},\ q_{in} \in {\bf \mathbb{R}}$ and $\epsilon^{2}=0$. If the standard part $q_{st}$ is nonzero, then $q$ is said to be {\it appreciable}. The conjugate of ${\mathsf q}$ is itself that is denoted by ${\mathsf q}^{*}$.
For a given real coefficient polynomial $F(X)$, evaluate $F$ on a dual number to get $F( a+b\epsilon )=F(a)+F^{'}(a)b\epsilon$, where $F^{'}$ is the derivative of $F$. The multiple of $\epsilon$ is sometimes referred to be {\it infinitesimal} part, which intuitively means that $\epsilon$ is \textquotedblleft so small" that it squares to zero.

As introduced in \cite{s15}, we may define a total order of dual numbers.  Dual numbers ${\mathsf p}=p_{st}+p_{in}\epsilon$ and ${\mathsf q}=q_{st}+q_{in}\epsilon$ $\in {\bf \mathbb{D}}$ with $p_{st},\ q_{st},\ p_{in}, \ q_{in} \in {\bf \mathbb{R}}$, satisfy ${\mathsf p}>{\mathsf q}$, if $p_{st}>q_{st}$ or $p_{st}=q_{st}$ and $p_{in}>q_{in}$; ${\mathsf p}={\mathsf q}$, if and only if $p_{st}=q_{st}$ and $p_{in}=q_{in}$.  We say ${\mathsf q}$ a positive dual number if ${\mathsf q}>0$, and  ${\mathsf q}$ a nonnegative dual number if ${\mathsf q}\ge 0$. For any  positive integer $k$, it is easy to verify the following properties:
\begin{itemize}
	\item[(1)]
	${\mathsf p}+{\mathsf q}=p_{st}+q_{st}+(p_{in}+q_{in})\epsilon$. 
	
	\item[(2)]
	${\mathsf p}{\mathsf q}=p_{st}q_{st}+(p_{st}q_{in}+p_{in}q_{st})\epsilon$.
	
	\item[(3)] 
	${\mathsf q}^{k}=q_{st}^{k}+kq_{st}^{k-1}q_{in}\epsilon$.
	
	\item[(4)]
	If ${\mathsf q}$ is appreciable, then ${\mathsf q}$ is invertible and ${\mathsf q}^{-1}=q_{st}^{-1}-q_{st}^{-1}q_{in}q_{st}^{-1}\epsilon$.
	
	\item[(5)]
	If ${\mathsf q}$ is nonnegative and appreciable, then $\sqrt{{\mathsf q}}=\sqrt{q_{st}}+\frac{q_{in}}{2\sqrt{q_{st}}}\epsilon$.
\end{itemize}

Define the absolute value of ${\mathsf q}$ $\in {\bf \mathbb{D}}$ as 
\begin{equation*}
	\left | {\mathsf q} \right |=\begin{cases}
		\left | q_{st} \right |+{\rm sgn}(q_{st})q_{in}\epsilon& \text{ if } \ q_{st}\neq 0\\
		\left | q_{in} \right |\epsilon& \text{ otherwise }  
	\end{cases}, 
\end{equation*}
where 
\begin{equation*}
	{\rm sgn}(x)=\begin{cases}
		-1,& \text{ if } \ x>0\\
		\ 0,& \text{ if } \ x=0 \\
		\ 1,& \ \text{if} \ \ x<0
	\end{cases}.
\end{equation*}
Then, we have
\begin{itemize}
	\item[(1)]
	$\left | {\mathsf q} \right |=0$ if and only if ${\mathsf q}=0$.
	
	\item[(2)]
	$\left | {\mathsf q} \right | \geq  {\mathsf q}$, and $\left | {\mathsf q} \right | = {\mathsf q}$ holds if ${\mathsf q} \ge 0$.
	
	\item[(3)]
	$\left | {\mathsf q} \right |= \sqrt{{\mathsf q}^{2}}$ if ${\mathsf q}$ is appreciable.
	
	\item[(4)]
	$\left | {\mathsf p}{\mathsf q} \right |=\left | {\mathsf p} \right |\left | {\mathsf q} \right |$.
	
	\item[(5)]
	$\left | {\mathsf p}+{\mathsf q} \right |\le \left | {\mathsf p} \right |+\left | {\mathsf q} \right |$.
\end{itemize}

Throughout the paper, we use bold case letters to denote quaternions or dual quaternions. However, it is easy to determine which kind they belong to from the context, based on whether they are italicized or not.
Donate $\mathbf{q}=q_{0}+q_{1}{\bf i}+q_{2}{\bf j}+q_{3}{\bf k}$ $\in {\bf \mathbb{Q}}$, where $q_{0}$, $q_{1}$, $q_{2}$, $q_{3}$ $\in {\bf \mathbb{R}}$, and $\bf{i}$, $\bf{j}$, $\bf{k}$ are imaginary units satisfying
\begin{equation*}
	\bf{i}^{2}=\bf{j}^{2}=\bf{k}^{2}=\bf{ijk}=-1, \  \   \bf{ij}=-\bf{ji}=\bf{k},  \  \   \bf{jk}=-\bf{kj}=\bf{i}, \  \  \bf{ki}=-\bf{ik}=\bf{j}.
\end{equation*} 
The multiplication of quaternions indeed follows the distribution law, yet it exhibits noncommutative properties, setting it apart from more conventional algebraic structures. The conjugate of a quaternion $\bf q$ is ${\bf q}^{*}=q_{0}-q_{1}{\bf i}-q_{2}{\bf j}-q_{3}{\bf k}$, and the magnitude of $\bf q$ is $\left |{\bf q} \right | =\sqrt{{\bf qq}^*} =\sqrt{q_{0}^{2}+q_{1}^{2}+q_{2}^{2}+q_{3}^{2}}$.

As a derivation of the dual number, a dual quaternion $\boldsymbol{q}$ has the form $\boldsymbol{q}={\bf q}_{st}+{\bf q}_{in}\epsilon$, where ${\bf q}_{st}$ and ${\bf q}_{in}$ are quaternions, and $\epsilon$ is the dual unit satisfying $\epsilon^{2}=0$. $\epsilon$ is commutative for multiplication when it encounters real number, complex number, or quaternion. 
We call ${\bf q}_{st}$ the standard part of $\boldsymbol{q}$, and ${\bf q}_{in}$ the dual part of $\boldsymbol{q}$. If ${\bf q}_{st} \ne 0$, we say that $\boldsymbol{q}$ is appreciable, otherwise, $\boldsymbol{q}$ is infinitesimal. The conjugate of $\boldsymbol{q} $ is denoted as $\boldsymbol{q}^{*}={\bf q}_{st}^{*}+{\bf q}_{in}^{*}\epsilon$. The dual quaternion vector inner product $\left \langle \cdot,\cdot \right \rangle$ in the dual quaternion ring is defined by $\left \langle \boldsymbol{u},\boldsymbol{v} \right \rangle=\boldsymbol{v}^* \boldsymbol{u}$ for any $\boldsymbol{u},\boldsymbol{v}\in {\bf \mathbb{DQ}}^{m}$, from which the induced dual quaternion vector norm is a dual number, defined by $\left \| \boldsymbol{u} \right \|_{2}=\sqrt{\left \langle \boldsymbol{u},\boldsymbol{u} \right \rangle}$.

Ling, He and Qi \cite{s11} defined the orthogonality of two appreciable dual quaternion vectors. In the following definition we make a complement that covers infinitesimal dual quaternion vectors, which is called the weak orthogonality of two general dual quaternion vectors.
\begin{definition}\label{dqorth}
	Let $\boldsymbol{u},\boldsymbol{v}\in {\bf \mathbb{DQ}}^{m}$. Then the dual quaternion vectors $\boldsymbol{u},\boldsymbol{v}$ are weakly orthogonal if $\left \langle \boldsymbol{u},\boldsymbol{v} \right \rangle =0$. Furthermore, if both $\boldsymbol{u}$ and $\boldsymbol{v}$ are appreciable dual quaternion vectors, then they are orthogonal to each other. A more general case, for $k$ appreciable dual quaternion vectors $\boldsymbol{u}_1,\boldsymbol{u}_2,\ldots,\boldsymbol{u}_k\in {\bf \mathbb{DQ}}^{n}$, 
	the $k$-tuple $\{\boldsymbol{u}_1,\boldsymbol{u}_2,\ldots,\boldsymbol{u}_k\}$ is said to be orthogonal if $\left \langle \boldsymbol{u}_{i},\boldsymbol{u}_{j} \right \rangle =0$ for $i\ne j$, and orthonormal if it is orthogonal and further satisfies $\left \langle \boldsymbol{u}_{i},\boldsymbol{u}_{i} \right \rangle =1$ for $i=1,2,\ldots,k$.
\end{definition}

Notice that two orthogonal dual quaternion vectors must be weakly orthogonal, but not vise versa, because the inner product of two infinitesimal dual quaternion vectors is zero obviously.

Denote the collections of $m \times n$ quaternion matrices and dual quaternion  matrices by ${\bf \mathbb{Q}}^{m\times n}$ and ${\bf \mathbb{DQ}}^{m\times n}$, respectively. A dual quaternion matrix $\boldsymbol{A}=({\boldsymbol a}_{ij})\in {\bf \mathbb{DQ}}^{m \times n}$ has the form $\boldsymbol{A}={\bf A}_{st}+{\bf A}_{in}\epsilon$, where ${\bf A}_{st},\ {\bf A}_{in} \in {\bf \mathbb{Q}}^{m\times n}$ are the standard part and the infinitesimal part of $\boldsymbol{A}$, respectively. We say $\boldsymbol{A}$ is appreciable if ${\bf A}_{st} \ne O$; otherwise, $\boldsymbol{A}$ is infinitesimal. As Ling, He and Qi introduced in \cite{s10}, the conjugate transpose of $\boldsymbol{A}$ is denoted as $\boldsymbol{A}^{*}=({\boldsymbol a}_{ji}^{*})={\bf A}_{st}^{*}+{\bf A}_{in}^{*}\epsilon$, and $(\boldsymbol{AB})^{*}=\boldsymbol{B}^{*}\boldsymbol{A}^{*}$ for some $\boldsymbol{B}\in {\bf \mathbb{DQ}}^{n\times p}$. For a square matrix ${\boldsymbol A}\in {\bf \mathbb{DQ}}^{m\times m}$, we say $\boldsymbol{A}$ is nonsingular if there exists $\boldsymbol{AB}=\boldsymbol{BA}=I_{m}$ for some square matrix $\boldsymbol{B}\in {\bf \mathbb{DQ}}^{m\times m}$, and the inverse of $\boldsymbol{A}$ is $\boldsymbol{A}^{-1}=\boldsymbol{B}$. If the square matrix $\boldsymbol{A}\in {\bf \mathbb{DQ}}^{m\times m}$ satisfies $\boldsymbol{A}^{*}=\boldsymbol{A}$, then $\boldsymbol{A}$ is Hermitian. Furthermore, $\boldsymbol{A}$ is unitary if $\boldsymbol{A}^{*}\boldsymbol{A}=\boldsymbol{A}\boldsymbol{A}^{*}=I_m$.

\section{QR and CS decompositions of  dual quaternion matrices}\label{section3}

In order to better understand the GSVD of dual quaternion matrices, we first discuss the Householder transform, QR decomposition and CS decomposition of matrices over dual quaternion ring. These theoretical results are similar but essentially distinguish from those over complex number field. 

\subsection{QR decomposition of a dual quaternion matrix}

First we consider the properties of a dual quaternion Householder transformation matrix. Although it has been well investigated in \cite{s17}, we make a short complement about the eigenvalues of the dual quaternion Householder transformation matrix. 

\begin{proposition}[Householder transformation]\label{lm:3.1}
	Let $\boldsymbol{a}, \boldsymbol{b}\in {\bf \mathbb{DQ}}^{n}$ with $\boldsymbol{a}\ne \boldsymbol{b}$, and $\boldsymbol{H}=I_n-2\boldsymbol{v}\boldsymbol{v}^{*}\in{\bf \mathbb{DQ}}^{n \times n}$ with $\boldsymbol{v}$ being a unit dual quaternion vector. Then the following properties hold:
	\begin{itemize}
		\item[(1)]
		$\boldsymbol{H}^{*}=\boldsymbol{H}$, i.e. $\boldsymbol{H}$ is Hermitian.
		
		\item[(2)]
		$\boldsymbol{H}^{*}\boldsymbol{H}=I_n$, i.e. $\boldsymbol{H}$ is unitary.
		
		\item[(3)]
		$\boldsymbol{H}$ has $n-1$ right eigenvalues $1$, and one right eigenvalue $-1$.
		
		\item[(4)]
		There exists a Householder transformation matrix $\boldsymbol{H}$ such that $\boldsymbol{Ha}=\boldsymbol{b}$, if and only if 
		\begin{equation*}
			\boldsymbol{a}^{*}\boldsymbol{a}=\boldsymbol{b}^{*}\boldsymbol{b},  \quad   \boldsymbol{a}^{*}\boldsymbol{b}=\boldsymbol{b}^{*}\boldsymbol{a}.
		\end{equation*}
		Notice that for any nonzero dual quaternion vector $\boldsymbol{a}=(\boldsymbol{\alpha}_{1}, \ldots, \boldsymbol{\alpha}_{n})$, we have $\boldsymbol{Ha}=-\boldsymbol{\delta} {\left \| \boldsymbol{a} \right \|_{2}}e_{1}$, where
		\begin{equation*}
			\boldsymbol{\delta}=\left\{\begin{matrix}
				\frac{\boldsymbol{\alpha}_{1}}{\left | \boldsymbol{\alpha}_{1} \right |}, & \boldsymbol{\alpha}_{1} \ \text{\rm is appreciable} \\
				1,& \text{\rm otherwise}
			\end{matrix}\right..
		\end{equation*}
	\end{itemize}
\end{proposition}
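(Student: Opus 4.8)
The plan is to verify properties (1)--(4) in turn, leaning on the algebra of dual quaternions recalled in \cref{section2} and mimicking the classical complex/quaternion arguments while being careful about the infinitesimal part and about right versus left eigenvalues. For (1), I would simply compute $\boldsymbol{H}^{*}=(I_n-2\boldsymbol{v}\boldsymbol{v}^{*})^{*}=I_n-2(\boldsymbol{v}\boldsymbol{v}^{*})^{*}=I_n-2\boldsymbol{v}\boldsymbol{v}^{*}=\boldsymbol{H}$, using $(\boldsymbol{vv}^{*})^{*}=\boldsymbol{v}^{**}\boldsymbol{v}^{*}=\boldsymbol{vv}^{*}$ and the fact that $\boldsymbol{v}$ being a unit vector means $\boldsymbol{v}^{*}\boldsymbol{v}=1$ (a real scalar, hence central). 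For (2), expand $\boldsymbol{H}^{*}\boldsymbol{H}=\boldsymbol{H}^2=(I_n-2\boldsymbol{vv}^{*})^2=I_n-4\boldsymbol{vv}^{*}+4\boldsymbol{v}(\boldsymbol{v}^{*}\boldsymbol{v})\boldsymbol{v}^{*}=I_n-4\boldsymbol{vv}^{*}+4\boldsymbol{vv}^{*}=I_n$; the only subtlety is that $\boldsymbol{v}^{*}\boldsymbol{v}=1$ exactly because $\boldsymbol{v}$ is a \emph{unit} dual quaternion vector, so no infinitesimal correction survives.

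For (3), I would work with right eigenvectors. Clearly $\boldsymbol{Hv}=\boldsymbol{v}-2\boldsymbol{v}(\boldsymbol{v}^{*}\boldsymbol{v})=\boldsymbol{v}-2\boldsymbol{v}=-\boldsymbol{v}$, so $-1$ is a right eigenvalue with eigenvector $\boldsymbol{v}$. To produce $n-1$ independent right eigenvectors for the eigenvalue $1$, I would pick any dual quaternion vector $\boldsymbol{w}$ weakly orthogonal to $\boldsymbol{v}$ in the sense of \cref{dqorth}, i.e. $\boldsymbol{v}^{*}\boldsymbol{w}=0$; then $\boldsymbol{Hw}=\boldsymbol{w}-2\boldsymbol{v}(\boldsymbol{v}^{*}\boldsymbol{w})=\boldsymbol{w}$. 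The space of such $\boldsymbol{w}$ is a free right $\mathbb{DQ}$-module of rank $n-1$ (complete $\boldsymbol{v}$ to a unitary matrix, say via the SVD or a Householder reduction, and take the remaining columns), which gives the claimed multiplicities. Here I should be mildly cautious: over the dual quaternions ``dimension counting'' must be phrased through appreciable/unitary completions rather than naive linear algebra, but the unitary extension of a unit vector is exactly the kind of statement the paper already uses.

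For (4), the forward direction is immediate: if $\boldsymbol{Ha}=\boldsymbol{b}$ then, since $\boldsymbol{H}$ is unitary and Hermitian, $\boldsymbol{b}^{*}\boldsymbol{b}=\boldsymbol{a}^{*}\boldsymbol{H}^{*}\boldsymbol{Ha}=\boldsymbol{a}^{*}\boldsymbol{a}$ and $\boldsymbol{a}^{*}\boldsymbol{b}=\boldsymbol{a}^{*}\boldsymbol{Ha}=(\boldsymbol{Ha})^{*}\boldsymbol{a}=\boldsymbol{b}^{*}\boldsymbol{a}$ (using $\boldsymbol{H}^{*}=\boldsymbol{H}$). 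For the converse I would set $\boldsymbol{v}=(\boldsymbol{a}-\boldsymbol{b})/\|\boldsymbol{a}-\boldsymbol{b}\|_{2}$ — this requires $\boldsymbol{a}-\boldsymbol{b}$ to be appreciable so the norm is invertible, which is the case handled by the ``otherwise'' clause being separated out — and check $\boldsymbol{Ha}=\boldsymbol{b}$ by computing $\boldsymbol{v}^{*}\boldsymbol{a}$ and showing $2\boldsymbol{v}(\boldsymbol{v}^{*}\boldsymbol{a})=\boldsymbol{a}-\boldsymbol{b}$; this reduces, after clearing $\|\boldsymbol{a}-\boldsymbol{b}\|_{2}^{2}$, to the identity $2(\boldsymbol{a}-\boldsymbol{b})^{*}\boldsymbol{a}=(\boldsymbol{a}-\boldsymbol{b})^{*}(\boldsymbol{a}-\boldsymbol{b})$, i.e. to $\boldsymbol{a}^{*}\boldsymbol{a}-\boldsymbol{a}^{*}\boldsymbol{b}-\boldsymbol{b}^{*}\boldsymbol{a}+\boldsymbol{b}^{*}\boldsymbol{b}=2\boldsymbol{a}^{*}\boldsymbol{a}-2\boldsymbol{b}^{*}\boldsymbol{a}$, which collapses to exactly the two hypotheses $\boldsymbol{a}^{*}\boldsymbol{a}=\boldsymbol{b}^{*}\boldsymbol{b}$ and $\boldsymbol{a}^{*}\boldsymbol{b}=\boldsymbol{b}^{*}\boldsymbol{a}$. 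Finally, for the displayed normalization $\boldsymbol{Ha}=-\boldsymbol{\delta}\|\boldsymbol{a}\|_{2}e_1$, I would take $\boldsymbol{b}=-\boldsymbol{\delta}\|\boldsymbol{a}\|_{2}e_1$ with $\boldsymbol{\delta}$ as specified and verify the two compatibility conditions: $\boldsymbol{b}^{*}\boldsymbol{b}=\|\boldsymbol{a}\|_{2}^{2}|\boldsymbol{\delta}|^{2}=\|\boldsymbol{a}\|_{2}^{2}=\boldsymbol{a}^{*}\boldsymbol{a}$ since $|\boldsymbol{\delta}|=1$ by construction, and $\boldsymbol{a}^{*}\boldsymbol{b}=-\|\boldsymbol{a}\|_{2}\boldsymbol{\alpha}_1^{*}\boldsymbol{\delta}$, which equals its own conjugate precisely because $\boldsymbol{\alpha}_1^{*}\boldsymbol{\delta}=\boldsymbol{\alpha}_1^{*}\boldsymbol{\alpha}_1/|\boldsymbol{\alpha}_1|=|\boldsymbol{\alpha}_1|$ is a (nonnegative) dual number when $\boldsymbol{\alpha}_1$ is appreciable, and is $0$ otherwise. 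The main obstacle I anticipate is not any single computation but the bookkeeping around appreciability: every time I invert a dual-quaternion norm or normalize a vector I must confirm the standard part is nonzero, and this is exactly why the statement splits $\boldsymbol{\delta}$ into the appreciable and non-appreciable cases; the rank-$(n-1)$ eigenspace claim in (3) similarly needs the unitary-completion fact rather than a bare dimension argument.
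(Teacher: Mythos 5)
Your argument for part (3) is the same as the paper's: extend the appreciable unit vector $\boldsymbol{v}$ to an orthonormal basis $\widetilde{\boldsymbol{V}}=(\boldsymbol{v},\boldsymbol{v}_2,\ldots,\boldsymbol{v}_n)$ of $\mathbb{DQ}^n$ (the paper invokes [s9, Props.\ 3.4, 3.11] for this unitary completion) and read off $\boldsymbol{H}\boldsymbol{v}=-\boldsymbol{v}$ and $\boldsymbol{H}\boldsymbol{v}_i=\boldsymbol{v}_i$ for $i\ge 2$. For (1), (2) and (4) the paper does not give a proof but cites [s17]; your direct computations for (1), (2) and for the iff in (4) are the standard ones and are correct. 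In particular your reduction of $\boldsymbol{H}\boldsymbol{a}=\boldsymbol{b}$ to the scalar identity $2(\boldsymbol{a}-\boldsymbol{b})^{*}\boldsymbol{a}=(\boldsymbol{a}-\boldsymbol{b})^{*}(\boldsymbol{a}-\boldsymbol{b})$ is sound once one notes that $\|\boldsymbol{a}-\boldsymbol{b}\|_2$ and its square are dual numbers, hence central, so no noncommutativity interferes.

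There is, however, a concrete slip in your verification of the displayed "Notice" formula. You assert that $\boldsymbol{\alpha}_1^{*}\boldsymbol{\delta}=0$ when $\boldsymbol{\alpha}_1$ is not appreciable. That is false: not appreciable only means the standard part of $\boldsymbol{\alpha}_1$ vanishes, so $\boldsymbol{\alpha}_1$ may be a nonzero infinitesimal dual quaternion, for instance $\boldsymbol{\alpha}_1=\mathbf{i}\epsilon$. In that case $\boldsymbol{\delta}=1$ and $\boldsymbol{\alpha}_1^{*}\boldsymbol{\delta}=\boldsymbol{\alpha}_1^{*}=-\mathbf{i}\epsilon$, which is neither zero nor self-conjugate, so the compatibility condition $\boldsymbol{a}^{*}\boldsymbol{b}=\boldsymbol{b}^{*}\boldsymbol{a}$ with $\boldsymbol{b}=-\boldsymbol{\delta}\|\boldsymbol{a}\|_2 e_1$ actually fails; by the very iff you just established, no Householder then maps such an $\boldsymbol{a}$ to $-\boldsymbol{\delta}\|\boldsymbol{a}\|_2 e_1$. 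So the case split on $\boldsymbol{\delta}$ as you wrote it does not repair the appreciability bookkeeping you rightly flagged as the main hazard, and this step of your proof is a gap. (Since the paper itself defers (4), including the Notice, to [s17], this is a gap in your write-up rather than a deviation from the paper's argument; but whether the cited statement needs an additional hypothesis on $\boldsymbol{\alpha}_1$ — for example that it be appreciable or a pure infinitesimal dual number rather than a general infinitesimal dual quaternion — is worth checking before you rely on it.)
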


\begin{proof} 
	The properties (1), (2) and (4) can be found in \cite{s17}. We only prove (3). Obviously, $\boldsymbol{H}\boldsymbol{v}=-\boldsymbol{v}$.
	From \cite[Proposition 3.4 and Proposition 3.11]{s9}, we can expand $\boldsymbol{v}$  to an orthonormal basis of ${\bf \mathbb{DQ}}^{n}$ that is donated by $\widetilde{\boldsymbol{V}}=(\boldsymbol{v}, \boldsymbol{v}_2,\ldots, \boldsymbol{v}_{n})$. From the definition of right eigenvalue of dual quaternion matrices, it is easy to obtain 
	\begin{equation*}
		\boldsymbol{H}\widetilde{\boldsymbol{V}}=\widetilde{\boldsymbol{V}} {\rm diag}(\lambda_{1}, \lambda_{2}, \ldots, \lambda_{n}),
	\end{equation*}
	where $\lambda_{i}\ (i=1, 2, \ldots, n)$ are the right eigenvalues of $\boldsymbol{H}$, and 
	\begin{equation*}
		\lambda_{i}=\left\{\begin{matrix}
			-1,\qquad \quad  i=1 \\
			\  1,  \  \quad {\rm otherwise}
		\end{matrix}\right..
	\end{equation*}
	Then we complete the proof.
\end{proof}

With the Householder transformation of dual quaternion matrices in hand, we deduce the QR decomposition of dual quaternion matrices  in the following.

\begin{theorem}[QR decomposition]\label{QRT}
	Suppose that $\boldsymbol{A}\in {\bf \mathbb{DQ}}^{m \times n}$ with ${\rm rank}(\boldsymbol{A})=r > 0$. Then there exists an $n\times n$ permutation matrix $\Pi$, a unitary matrix $\boldsymbol{Q}\in {\bf \mathbb{DQ}}^{m \times m}$ and an upper trapezoidal dual matrix $\boldsymbol{R}\in {\bf \mathbb{DQ}}^{r \times n}$ such that
	$$\boldsymbol{A}\Pi=
	\boldsymbol{Q}\begin{pmatrix}
		\boldsymbol{R}\\
		0 
	\end{pmatrix},$$
	where the number of non-infinitesimal rows  of $\boldsymbol{R}$ provides the appreciable rank of $\boldsymbol{A}$, denoted by ${\rm Arank}(\boldsymbol{A})$.
\end{theorem}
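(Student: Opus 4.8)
The plan is to triangularize $\boldsymbol{A}$ from left to right by a pivoted Householder sweep, exactly as in the classical (quaternion) case, but split into two phases, because the dual quaternion $2$-norm is degenerate on infinitesimal vectors: every infinitesimal $\boldsymbol{u}$ has $\|\boldsymbol{u}\|_2=0$, so the identity $\boldsymbol{Ha}=-\boldsymbol{\delta}\|\boldsymbol{a}\|_2 e_1$ from \cref{lm:3.1}(4) is only useful while the pivot column is appreciable, and a purely standard Householder will have to take over once the working block turns infinitesimal. Throughout, I would use that the standard part of a product of dual quaternion matrices is the product of the standard parts, so that the standard part of the whole algorithm is precisely a classical pivoted quaternion QR of $\mathbf{A}_{st}$.

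\emph{Appreciable phase.} Set $\boldsymbol{A}^{(1)}=\boldsymbol{A}$ and suppose that after $k-1$ steps we have unitary matrices $\boldsymbol{Q}_j={\rm diag}(I_{j-1},\boldsymbol{H}_j)$ ($1\le j\le k-1$), each $\boldsymbol{H}_j$ a dual quaternion Householder matrix (so each $\boldsymbol{Q}_j$ is unitary by \cref{lm:3.1}(1)--(2)), and transposition matrices $\Pi_1,\dots,\Pi_{k-1}$ with $\boldsymbol{Q}_{k-1}^{*}\cdots\boldsymbol{Q}_1^{*}\,\boldsymbol{A}\,\Pi_1\cdots\Pi_{k-1}$ upper trapezoidal in its leading $(k-1)\times(k-1)$ block and with trailing block $\boldsymbol{A}^{(k)}$ on rows $k,\dots,m$, columns $k,\dots,n$. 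Using the total order on ${\bf \mathbb{D}}$ from \cref{section2}, I would pick a column of $\boldsymbol{A}^{(k)}$ of maximal $\|\cdot\|_2\in{\bf \mathbb{D}}$; its standard part is the quaternion norm of the corresponding column of $(\boldsymbol{A}^{(k)})_{st}$, so this coincides with classical column pivoting of $\mathbf{A}_{st}$. If that maximum is appreciable, move the column to the front with $\Pi_k$ and apply \cref{lm:3.1}(4) to get $\boldsymbol{H}_k$ annihilating the entries below the $(k,k)$ position; the new $(k,k)$ entry $-\boldsymbol{\delta}_k\|\cdot\|_2$ is appreciable. Since the standard part of the accumulated transform carries out a classical pivoted quaternion QR of $\mathbf{A}_{st}$, this phase runs for exactly $t:={\rm rank}(\mathbf{A}_{st})$ steps, after which the maximal $\|\cdot\|_2$ drops to $0$, i.e.\ $\boldsymbol{A}^{(t+1)}$ is infinitesimal, while the first $t$ diagonal entries produced are appreciable.

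\emph{Infinitesimal phase and assembly.} Write $\boldsymbol{A}^{(t+1)}=\mathbf{B}\epsilon$ with $\mathbf{B}\in{\bf \mathbb{Q}}^{(m-t)\times(n-t)}$. As every column of $\boldsymbol{A}^{(t+1)}$ now has $\|\cdot\|_2=0$, I would instead pivot on the quaternion norms of the columns of $\mathbf{B}$, bring the largest to the front, and annihilate its subdiagonal entries using the purely standard dual quaternion Householder $\boldsymbol{H}_k=I-2\mathbf{v}\mathbf{v}^{*}$ induced by the classical quaternion Householder of that column (legitimate since $\mathbf{v}$ is an appreciable unit vector, and $\boldsymbol{H}_k$ acts coordinatewise on the infinitesimal part $\mathbf{B}\epsilon$); the resulting $(k,k)$ entry is infinitesimal. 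This sub-sweep is a classical pivoted quaternion QR of $\mathbf{B}$, hence terminates after ${\rm rank}(\mathbf{B})$ steps with trailing block $O$. Putting $\Pi=\Pi_1\cdots\Pi_{r'}$, $\boldsymbol{Q}=\boldsymbol{Q}_1\cdots\boldsymbol{Q}_{r'}$ where $r':=t+{\rm rank}(\mathbf{B})$, and letting $\boldsymbol{R}$ be the top $r'$ rows of $\boldsymbol{Q}^{*}\boldsymbol{A}\Pi$, I obtain $\boldsymbol{A}\Pi=\boldsymbol{Q}\begin{pmatrix}\boldsymbol{R}\\ 0\end{pmatrix}$ with $\boldsymbol{Q}$ unitary, $\boldsymbol{R}$ upper trapezoidal, rows $1,\dots,t$ non-infinitesimal and rows $t+1,\dots,r'$ infinitesimal; hence $\boldsymbol{R}$ has exactly $t$ non-infinitesimal rows. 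To finish, invariance of the dual quaternion rank under unitary and permutation transformations gives ${\rm rank}(\boldsymbol{A})={\rm rank}\begin{pmatrix}\boldsymbol{R}\\ 0\end{pmatrix}$, and a direct count on this trapezoidal matrix (equivalently, the additivity ${\rm rank}(\boldsymbol{A})={\rm rank}(\mathbf{A}_{st})+{\rm rank}(\mathbf{B})$ obtained by splitting off the invertible leading triangular block) yields the value $r'$; thus $r'=r$ and $t={\rm Arank}(\boldsymbol{A})$.

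\emph{Main obstacle.} The Householder mechanics are handed to us by \cref{lm:3.1}, so the two substantive difficulties are: (i) the degeneracy $\|\boldsymbol{u}\|_2=0$ on infinitesimal $\boldsymbol{u}$, which forces a genuine phase change — one must detect when the working block becomes infinitesimal and then pivot and annihilate inside its infinitesimal part with purely standard Householders, checking these still obey the framework of \cref{lm:3.1}; and (ii) the rank bookkeeping over the noncommutative, nilpotent-carrying ring ${\bf \mathbb{DQ}}$, i.e.\ proving that the dual quaternion rank is invariant under unitary and permutation transformations, that ${\rm rank}(\boldsymbol{A})={\rm rank}(\mathbf{A}_{st})+{\rm rank}(\mathbf{B})$, and that ${\rm Arank}(\boldsymbol{A})={\rm rank}(\mathbf{A}_{st})$ — these resting on the standard/infinitesimal splitting rather than on any field-theoretic shortcut. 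Everything else (that ${\rm diag}(I_{k-1},\boldsymbol{H}_k)$ is unitary, that a product of unitaries is unitary, and that the trapezoidal zero pattern is preserved because each $\boldsymbol{Q}_k$ only touches rows $\ge k$) is routine.
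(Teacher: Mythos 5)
Your proof is correct in its essentials, but it takes a genuinely different route from the paper's. The paper proves the result by a short induction on $\operatorname{rank}(\boldsymbol{A})$: permute a nonzero column to the front, apply one Householder transformation from \cref{lm:3.1}(4) to compress it to ${\boldsymbol r}_{11}e_1$, and recurse on the $(m-1)\times(n-1)$ trailing block, which is asserted to have rank one less; the permutation and unitary factor are then assembled by a single block product. Your construction instead runs the whole Householder sweep explicitly, pivoting columns by their dual $2$-norm (using the total order on $\mathbb{D}$), and, crucially, splits the sweep into an \emph{appreciable phase} and an \emph{infinitesimal phase}, passing at step $t=\operatorname{rank}(\mathbf{A}_{st})$ from the dual-norm Householder of \cref{lm:3.1}(4) to purely standard (quaternion) Householders acting on the trailing infinitesimal block $\mathbf{B}\epsilon$.

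The two-phase structure is the real added value. You correctly observe that the closed-form target $\boldsymbol{Ha}=-\boldsymbol{\delta}\|\boldsymbol{a}\|_2 e_1$ in \cref{lm:3.1}(4) degenerates to $0$ when $\boldsymbol{a}$ is a nonzero infinitesimal vector (since $\|\boldsymbol{a}\|_2=0$ while $\boldsymbol{H}$ is invertible), so the Householder target must instead be the infinitesimal vector $\pm\|\mathbf{a}_{in}\|\,e_1\epsilon$; this point is glossed over in the paper's inductive step, where \cref{lm:3.1}(4) is invoked uniformly for the pivot column. Your phase split also makes the final clause of the theorem — that the number of non-infinitesimal rows of $\boldsymbol{R}$ equals $\operatorname{Arank}(\boldsymbol{A})$ — transparent (rows produced in the appreciable phase have appreciable pivots, rows produced in the infinitesimal phase are wholly infinitesimal), whereas the paper's induction does not actually address this clause at all. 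On the other hand, the paper's argument is shorter and sidesteps the rank-bookkeeping lemmas you flag as needing proof (invariance of $\operatorname{rank}$ under unitary and permutation factors, the identity $\operatorname{rank}(\boldsymbol{A})=\operatorname{rank}(\mathbf{A}_{st})+\operatorname{rank}(\mathbf{B})$, and $\operatorname{Arank}(\boldsymbol{A})=\operatorname{rank}(\mathbf{A}_{st})$), by simply asserting $\operatorname{rank}(\boldsymbol{A}_1)=k-1$ in the inductive step without justification — so the paper achieves brevity at the cost of rigour on exactly the points your version makes explicit. Both proofs are acceptable at their respective levels of detail; yours is the more careful route, the paper's the leaner one.

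One caveat worth fixing in your write-up: when you say "this coincides with classical column pivoting of $\mathbf{A}_{st}$," that is only true up to ties in $\|(\mathbf{A}_{st})_{:,j}\|$ (the dual total order breaks ties via the infinitesimal part), so the standard part of your sweep is \emph{a} pivoted QR of $\mathbf{A}_{st}$, not necessarily \emph{the same} pivot sequence a pure quaternion code would choose; the conclusion (the appreciable phase lasts exactly $\operatorname{rank}(\mathbf{A}_{st})$ steps) is unaffected, but the phrasing should be loosened to avoid claiming more than you need.
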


\begin{proof} 
	We perform the proof by mathematical induction for the rank of $\boldsymbol{A}$. When $r=1$, the conclusion clearly holds. Assuming the conclusion holds for $1\leq r<k$. While $r=k$, there exists a permutation matrix $\Pi_{k}$ such that $\widetilde{\boldsymbol{A}}=\boldsymbol{A}\Pi_{k}$, and the first column $\boldsymbol{a}_{1}$ of $\widetilde{\boldsymbol{A}}$ is a nonzero dual quaternion vector. From \cref{lm:3.1} there exists a Householder transformation $\widetilde{\boldsymbol{Q}}\in {\bf \mathbb{DQ}}^{m \times m}$ such that
	$\widetilde{\boldsymbol{Q}}^{*}\boldsymbol{a}_{1}={\boldsymbol r}_{11} e_{1}$. Then we have
	$$\widetilde{\boldsymbol{Q}}^{*}\widetilde{\boldsymbol{A}}=\begin{pmatrix}
		{\boldsymbol r}_{11}& \ast\\
		0&\boldsymbol{A}_{1} 
	\end{pmatrix},$$
	where $\boldsymbol{A}_{1} \in {\bf \mathbb{DQ}}^{(m-1)\times (n-1)}$ and ${\rm rank}(\boldsymbol{A}_{1})=k-1$. It can be inferred from the inductive assumption that there exists an $(n-1)\times (n-1)$ permutation matrix $\widetilde{\Pi}_{1}$ and a unitary matrix $\widetilde{\boldsymbol{Q}}_{1}\in {\bf \mathbb{DQ}}^{(m-1)\times (m-1)}$ such that
	$$\boldsymbol{A}_{1}\widetilde{\Pi}_{1}=\widetilde{\boldsymbol{Q}}_{1}\begin{pmatrix}
		\boldsymbol{R}_{1}\\
		0 
	\end{pmatrix},$$
	where $\boldsymbol{R}_{1}\in {\bf \mathbb{DQ}}^{(k-1) \times (n-1)}$ is an upper trapezoidal matrix  with ${\rm rank}(\boldsymbol{R}_{1})=k-1$.
	
	Let 
	$$\Pi=\Pi_k\begin{pmatrix}
		1&0\\
		0&\widetilde{\Pi}_{1}
	\end{pmatrix}, \quad
	\boldsymbol{Q}=\widetilde{\boldsymbol{Q}}\begin{pmatrix}
		1&0\\
		0&\widetilde{\boldsymbol{Q}}_{1}
	\end{pmatrix}.$$
	We have 
	$$\boldsymbol{A}\Pi=
	\boldsymbol{Q}\begin{pmatrix}
		\boldsymbol{R}\\
		0 
	\end{pmatrix},$$	
	which completes the proof.
\end{proof}

An alternative form of the QR decomposition of the dual quaternion matrix $\boldsymbol{A}$ in \cref{QRT} is as follows.

\begin{corollary}\label{QRR}
	Suppose that $\boldsymbol{A}\in {\bf \mathbb{DQ}}^{m \times n}$ and ${\rm rank}(\boldsymbol{A})=r > 0$. Then there exists a unitary matrix $\boldsymbol{Q}\in {\bf \mathbb{DQ}}^{m \times m}$ and an upper trapezoidal matrix $\boldsymbol{R}\in {\bf \mathbb{DQ}}^{m \times n}$ such that
	$\boldsymbol{A}=\boldsymbol{Q}\boldsymbol{R}$.
\end{corollary}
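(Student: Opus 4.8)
The plan is to re-run the constructive Householder reduction behind \cref{QRT}, but with \emph{no} column pivoting: the permutation there was needed only to make the non-infinitesimal rows of $\boldsymbol{R}$ appear first (so as to read off $\mathrm{Arank}(\boldsymbol{A})$), and the present statement makes no such claim. I would argue by induction on the number of columns $n$, and I would also allow the degenerate case $\mathrm{rank}(\boldsymbol{A})=0$ (handled trivially by $\boldsymbol{A}=I_m\cdot 0$), so that the deflated blocks produced below feed back into the induction uniformly. The base case $n=1$ is immediate: either $\boldsymbol{a}_1=0$ and we take $\boldsymbol{Q}=I_m$, or $\boldsymbol{a}_1\neq 0$ and \cref{lm:3.1} supplies a Householder matrix $\boldsymbol{Q}$ with $\boldsymbol{Q}^{*}\boldsymbol{a}_1={\boldsymbol r}_{11}e_1$.

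For the inductive step, write $\boldsymbol{A}=(\boldsymbol{a}_1\mid \boldsymbol{A}')$ with first column $\boldsymbol{a}_1\in{\bf \mathbb{DQ}}^{m}$ and $\boldsymbol{A}'\in{\bf \mathbb{DQ}}^{m\times(n-1)}$. If $\boldsymbol{a}_1=0$, apply the inductive hypothesis to $\boldsymbol{A}'$ to obtain $\boldsymbol{A}'=\boldsymbol{Q}\boldsymbol{R}'$ with $\boldsymbol{Q}$ unitary and $\boldsymbol{R}'\in{\bf \mathbb{DQ}}^{m\times(n-1)}$ upper trapezoidal, and set $\boldsymbol{R}=(0\mid \boldsymbol{R}')$, which is again upper trapezoidal; then $\boldsymbol{A}=\boldsymbol{Q}\boldsymbol{R}$. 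If $\boldsymbol{a}_1\neq 0$, use \cref{lm:3.1} to pick a Householder matrix $\widetilde{\boldsymbol{Q}}\in{\bf \mathbb{DQ}}^{m\times m}$ with $\widetilde{\boldsymbol{Q}}^{*}\boldsymbol{a}_1={\boldsymbol r}_{11}e_1$, so that
$$\widetilde{\boldsymbol{Q}}^{*}\boldsymbol{A}=\begin{pmatrix}{\boldsymbol r}_{11}&\ast\\ 0&\boldsymbol{A}_1\end{pmatrix},\qquad \boldsymbol{A}_1\in{\bf \mathbb{DQ}}^{(m-1)\times(n-1)}.$$
Applying the inductive hypothesis to $\boldsymbol{A}_1$ gives $\boldsymbol{A}_1=\widetilde{\boldsymbol{Q}}_1\boldsymbol{R}_1$ with $\widetilde{\boldsymbol{Q}}_1$ unitary and $\boldsymbol{R}_1$ upper trapezoidal, and putting
$$\boldsymbol{Q}=\widetilde{\boldsymbol{Q}}\begin{pmatrix}1&0\\ 0&\widetilde{\boldsymbol{Q}}_1\end{pmatrix},\qquad \boldsymbol{R}=\begin{pmatrix}{\boldsymbol r}_{11}&\ast\\ 0&\boldsymbol{R}_1\end{pmatrix},$$
one checks that $\boldsymbol{Q}$ is unitary (a product of unitary matrices), that $\boldsymbol{R}\in{\bf \mathbb{DQ}}^{m\times n}$ is upper trapezoidal because $\boldsymbol{R}_1$ is, and that $\boldsymbol{A}=\widetilde{\boldsymbol{Q}}\,\widetilde{\boldsymbol{Q}}^{*}\boldsymbol{A}=\boldsymbol{Q}\boldsymbol{R}$.

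I do not anticipate a real obstacle: this is \cref{QRT} with the pivoting bookkeeping removed. The one point worth flagging is that one cannot derive the corollary from \cref{QRT} simply by absorbing the permutation on the right as $\boldsymbol{A}=\boldsymbol{Q}\left(\begin{smallmatrix}\boldsymbol{R}\\ 0\end{smallmatrix}\right)\Pi^{-1}$, since right-multiplication by a permutation in general destroys the upper trapezoidal shape; the reduction must genuinely be carried out column by column from the left. A second, very mild subtlety, inherited from \cref{QRT}, is that an intermediate column may be infinitesimal yet nonzero, which is exactly the $\boldsymbol{\delta}=1$ branch in \cref{lm:3.1}, so the required Householder step is available at every stage regardless.
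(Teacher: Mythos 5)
The paper states \cref{QRR} without proof, as ``an alternative form'' of \cref{QRT}, so there is no paper proof to compare against; your fresh Householder induction is exactly the argument one would supply, and it is essentially \cref{QRT} with pivoting dropped and with zero columns passed through to $\boldsymbol{R}$ instead of permuted away. Your observation that the corollary cannot be obtained by writing $\boldsymbol{A}=\boldsymbol{Q}\bigl(\begin{smallmatrix}\boldsymbol{R}\\0\end{smallmatrix}\bigr)\Pi^{T}$ and absorbing $\Pi^{T}$ into $\boldsymbol{R}$ is correct and worth spelling out, since the column permutation generally destroys the trapezoidal shape.

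One point you flag is slightly misidentified, however. You say the case of a nonzero but infinitesimal column is ``exactly the $\boldsymbol{\delta}=1$ branch'' of \cref{lm:3.1}. The $\boldsymbol{\delta}=1$ branch only concerns the first \emph{entry} $\boldsymbol{\alpha}_1$ being non\mbox{-}appreciable; it does not cover the whole column $\boldsymbol{a}_1$ being infinitesimal. In that situation $\left\langle \boldsymbol{a}_1,\boldsymbol{a}_1\right\rangle=0$, so $\|\boldsymbol{a}_1\|_2=0$ and the explicit formula $\boldsymbol{H}\boldsymbol{a}_1=-\boldsymbol{\delta}\|\boldsymbol{a}_1\|_2 e_1$ of \cref{lm:3.1} would force $\boldsymbol{H}\boldsymbol{a}_1=0$, which is impossible for a unitary $\boldsymbol{H}$ and $\boldsymbol{a}_1\neq 0$. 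The step is nonetheless available: writing $\boldsymbol{a}_1={\bf a}_{in}\epsilon$ with ${\bf a}_{in}\neq 0$, take the ordinary quaternion Householder ${\bf H}_0$ with ${\bf H}_0{\bf a}_{in}={\bf c}\,e_1$ and view it as a dual quaternion Householder with zero dual part; then ${\bf H}_0\boldsymbol{a}_1={\bf c}\epsilon\, e_1$, a nonzero infinitesimal multiple of $e_1$. Since \cref{QRT} itself applies \cref{lm:3.1} to a ``nonzero'' pivot column, this is an issue inherited from the paper rather than a flaw in your argument; still, if you present your proof, it is cleaner to justify the reduction of an infinitesimal column this way rather than by appeal to the $\boldsymbol{\delta}=1$ branch.
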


From \cref{QRT} we can derive a full-rank decomposition of the dual quaternion matrix $\boldsymbol{A}$ as below.

\begin{corollary}\label{coro:FRD}
	Suppose that $\boldsymbol{A}\in {\bf \mathbb{DQ}}^{m\times n}$ and ${\rm rank}(\boldsymbol{A})=r>0$. Then there exist $\boldsymbol{F}\in {\bf \mathbb{DQ}}^{m\times r}$ and $\boldsymbol{G}\in {\bf \mathbb{DQ}}^{r\times n}$ such that
	$$\boldsymbol{A}=\boldsymbol{F}\boldsymbol{G}.$$
\end{corollary}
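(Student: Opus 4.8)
The plan is to deduce the full-rank decomposition directly from the QR decomposition established in \cref{QRT}, mimicking the classical argument over $\mathbb{R}$ or $\mathbb{C}$. First I would invoke \cref{QRT} to obtain an $n\times n$ permutation matrix $\Pi$, a unitary $\boldsymbol{Q}\in{\bf\mathbb{DQ}}^{m\times m}$, and an upper trapezoidal $\boldsymbol{R}\in{\bf\mathbb{DQ}}^{r\times n}$ with $\boldsymbol{A}\Pi=\boldsymbol{Q}\begin{pmatrix}\boldsymbol{R}\\0\end{pmatrix}$. Partition $\boldsymbol{Q}=(\boldsymbol{Q}_1,\boldsymbol{Q}_2)$ conformally, where $\boldsymbol{Q}_1\in{\bf\mathbb{DQ}}^{m\times r}$ consists of the first $r$ columns of $\boldsymbol{Q}$. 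Then $\boldsymbol{A}\Pi=\boldsymbol{Q}_1\boldsymbol{R}$, and since $\Pi$ is a permutation matrix it is unitary (in particular invertible with $\Pi^{-1}=\Pi^{*}$), so $\boldsymbol{A}=\boldsymbol{Q}_1\boldsymbol{R}\Pi^{-1}$.

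Next I would set $\boldsymbol{F}=\boldsymbol{Q}_1\in{\bf\mathbb{DQ}}^{m\times r}$ and $\boldsymbol{G}=\boldsymbol{R}\Pi^{-1}\in{\bf\mathbb{DQ}}^{r\times n}$, which have the required dimensions and satisfy $\boldsymbol{A}=\boldsymbol{F}\boldsymbol{G}$. The only remaining point is to confirm that this is genuinely a \emph{full-rank} factorization, i.e. that ${\rm rank}(\boldsymbol{F})={\rm rank}(\boldsymbol{G})=r$. Since $\boldsymbol{Q}$ is unitary its columns are linearly independent over ${\bf\mathbb{DQ}}$, so ${\rm rank}(\boldsymbol{F})=r$; and $\boldsymbol{G}=\boldsymbol{R}\Pi^{-1}$ is obtained from $\boldsymbol{R}$ by right-multiplication by an invertible matrix, hence ${\rm rank}(\boldsymbol{G})={\rm rank}(\boldsymbol{R})=r$ because $\boldsymbol{R}$, being upper trapezoidal of rank $r$ with $r$ rows, has full row rank. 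Alternatively one argues ${\rm rank}(\boldsymbol{A})\le\min\{{\rm rank}(\boldsymbol{F}),{\rm rank}(\boldsymbol{G})\}\le r={\rm rank}(\boldsymbol{A})$, forcing equality throughout.

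The main obstacle I anticipate is purely technical: rank is a more delicate notion over the dual quaternion ring than over a field, because ${\bf\mathbb{DQ}}$ has zero divisors (the infinitesimal elements) and the notion of rank used here must be the one consistent with \cref{QRT}. In particular I must be careful that statements like ``the columns of a unitary dual quaternion matrix are independent'' and ``right-multiplication by an invertible matrix preserves rank'' are valid in the sense of rank adopted in the paper; these should follow from the rank conventions already implicit in \cref{QRT} and \cref{QRR}, but they deserve an explicit sentence rather than being taken for granted. Since the corollary as stated only asserts existence of $\boldsymbol{F}$ and $\boldsymbol{G}$ (it does not even insist on the full-rank property in its wording), a minimal proof can simply exhibit $\boldsymbol{F}=\boldsymbol{Q}_1$ and $\boldsymbol{G}=\boldsymbol{R}\Pi^{-1}$ and verify $\boldsymbol{A}=\boldsymbol{F}\boldsymbol{G}$, leaving the rank remarks as a brief addendum.
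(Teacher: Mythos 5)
Your proposal is exactly the paper's argument: invoke \cref{QRT}, take $\boldsymbol{F}=\boldsymbol{Q}_1$ (first $r$ columns of $\boldsymbol{Q}$) and $\boldsymbol{G}=\boldsymbol{R}\Pi^{-1}=\boldsymbol{R}\Pi^{T}$, and observe that $\boldsymbol{A}=\boldsymbol{F}\boldsymbol{G}$. The paper's proof is even more terse than yours, simply asserting full column/row rank of $\boldsymbol{F}$ and $\boldsymbol{G}$ without the cautionary remarks you add about rank over a ring with zero divisors.
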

\begin{proof}
	In \cref{QRT}, we directly take the first $r$ columns of $\boldsymbol{Q}$ as $\boldsymbol{F}=\boldsymbol{Q}_{1}\in {\bf \mathbb{DQ}}^{m\times r}$ that is of full column rank, and $\boldsymbol{G}=\boldsymbol{R}\Pi^{T}\in {\bf \mathbb{DQ}}^{r\times n}$ is of full row rank.
\end{proof}

\begin{remark}\label{UD}
	The full-rank decomposition of the dual quaternion matrix is not unique.	Taking the first $r$ columns of $\boldsymbol{Q}$ as $\boldsymbol{Q}_{1}$, $\widetilde{\boldsymbol{R}}=\boldsymbol{R}\Pi^{T}\in {\bf \mathbb{DQ}}^{r\times n}$ in \cref{QRT}, we call the decomposition $\boldsymbol{A}=\boldsymbol{Q}_{1}\widetilde{\boldsymbol{R}}$ {\it unitary decomposition} of dual quaternion matrices.
\end{remark}

\subsection{CS decomposition of a unitary dual quaternion matrix}

CS decomposition of a partitioned real orthogonal matrix is one of the important tools in matrix analysis and scientific computing \cite{page1981,s16}. It plays a central role for computing the GSVD of real matrices \cite{{ZBai},Chu2000,1994History}. In this subsection, we study the CS decomposition of a partitioned unitary dual quaternion matrix, which is a preparation for the forthcoming GSVD of unitary dual quaternion matrices.

\begin{lemma}{\label{infiniorth}}
	Suppose $\boldsymbol{u}=\boldsymbol{u}_{in}\epsilon,\boldsymbol{v}=\boldsymbol{v}_{st}+\boldsymbol{v}_{in}\epsilon\in {\bf \mathbb{DQ}}^{n}$. If $\left \langle \boldsymbol{u},\boldsymbol{v} \right \rangle =0$, then $\boldsymbol{u}_{in}$ and $\boldsymbol{v}_{st}$ are orthogonal over quaternion ring.
\end{lemma}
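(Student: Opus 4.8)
The plan is to expand the dual quaternion inner product directly and read off the coefficient of $\epsilon$. Recall from \cref{section2} that $\langle \boldsymbol{u},\boldsymbol{v}\rangle=\boldsymbol{v}^{*}\boldsymbol{u}$ and that conjugate transposition acts on the standard and infinitesimal parts separately, so $\boldsymbol{v}^{*}=\boldsymbol{v}_{st}^{*}+\boldsymbol{v}_{in}^{*}\epsilon$. Substituting $\boldsymbol{u}=\boldsymbol{u}_{in}\epsilon$ and this expression for $\boldsymbol{v}^{*}$, and using that $\epsilon$ is commutative with quaternions together with $\epsilon^{2}=0$, one obtains
\[
\langle \boldsymbol{u},\boldsymbol{v}\rangle=\bigl(\boldsymbol{v}_{st}^{*}+\boldsymbol{v}_{in}^{*}\epsilon\bigr)\boldsymbol{u}_{in}\epsilon=\boldsymbol{v}_{st}^{*}\boldsymbol{u}_{in}\,\epsilon+\boldsymbol{v}_{in}^{*}\boldsymbol{u}_{in}\,\epsilon^{2}=\bigl(\boldsymbol{v}_{st}^{*}\boldsymbol{u}_{in}\bigr)\epsilon .
\]

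Next I would invoke the hypothesis $\langle \boldsymbol{u},\boldsymbol{v}\rangle=0$. Since a dual quaternion is zero precisely when both its standard and its infinitesimal part vanish, the identity above forces $\boldsymbol{v}_{st}^{*}\boldsymbol{u}_{in}=0$ in ${\bf \mathbb{Q}}$. But $\boldsymbol{v}_{st}^{*}\boldsymbol{u}_{in}=\sum_{i=1}^{n}(\boldsymbol{v}_{st})_{i}^{*}(\boldsymbol{u}_{in})_{i}$ is exactly the quaternion inner product of the quaternion vectors $\boldsymbol{u}_{in}$ and $\boldsymbol{v}_{st}$, so this says precisely that $\boldsymbol{u}_{in}$ and $\boldsymbol{v}_{st}$ are orthogonal over the quaternion ring, which is the claim.

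There is essentially no obstacle here; the argument is a one-line computation. The only points that require a little care are the bookkeeping intrinsic to the noncommutative setting — keeping track of which factor is conjugated when forming $\boldsymbol{v}^{*}\boldsymbol{u}$ — and remembering that the cross term is annihilated by $\epsilon^{2}=0$, so that the whole inner product collapses to a purely infinitesimal quantity whose coefficient is the desired quaternion inner product.
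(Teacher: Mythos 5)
Your argument is correct and matches the paper's proof essentially line for line: both expand $\langle\boldsymbol{u},\boldsymbol{v}\rangle=(\boldsymbol{v}_{st}^{*}+\boldsymbol{v}_{in}^{*}\epsilon)\boldsymbol{u}_{in}\epsilon$, use $\epsilon^{2}=0$, and conclude $\boldsymbol{v}_{st}^{*}\boldsymbol{u}_{in}=0$. The only difference is that you spell out the cancellation of the cross term a bit more explicitly than the paper does.
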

\begin{proof}
	From \cref{dqorth} the dual quaternion vectors $\boldsymbol{u},\boldsymbol{v}$ are orthogonal. Then we have 
	\begin{equation*}
		(\boldsymbol{v}_{st}^{*}+\boldsymbol{v}_{in}^{*}\epsilon)\boldsymbol{u}_{in}\epsilon=0,
	\end{equation*}
	which implies
	\begin{equation*}
		\boldsymbol{v}_{st}^{*}\boldsymbol{u}_{in}=0.
	\end{equation*}
	Then the conclusion holds.
\end{proof}

\begin{lemma}{\label{PreCS}}
	Suppose the dual quaternion matrix $\boldsymbol{A} \in {\bf \mathbb{DQ}}^{m\times n}$ satisfies 
	\begin{equation}\label{AA}
		\boldsymbol{A}^{*}\boldsymbol{A}=\begin{pmatrix}
			0_s&0\\
			0&{\sf \Delta}
		\end{pmatrix},
	\end{equation} 
	where ${\sf \Delta}={\rm diag}({ \sf d}_{1},\ldots,{\sf d}_{t})$ with ${\sf d}_{i}$ $(> 0)$ being the appreciable dual number for $i=1,2,\ldots,t$, and $s+t=n$. Then there exists an $m\times m$ unitary dual quaternion matrix $\boldsymbol{U}_1$ such that 
	\begin{equation*}
		\boldsymbol{U}_1^{*}\boldsymbol{A}=\begin{pmatrix}
			{\bf T}\epsilon&0\\
			0& {\sf \Theta} 
		\end{pmatrix}, 
	\end{equation*}
	where ${\sf \Theta}={\rm diag}({\sf h}_{1},\ldots,{\sf h}_{t})$ with ${\sf h}_{i}$ $(>0)$ being the appreciable dual number for $i=1,2,\ldots,t$, and $\bf T$ is an $(m-t)\times s$ quaternion matrix.
\end{lemma}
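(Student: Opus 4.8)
The plan is to decompose $\boldsymbol{A}$ columnwise. Write $\boldsymbol{A}=(\boldsymbol{A}_1\ \ \boldsymbol{A}_2)$ according to the block structure $s+t=n$, so that \eqref{AA} says $\boldsymbol{A}_1^{*}\boldsymbol{A}_1=0_s$, $\boldsymbol{A}_1^{*}\boldsymbol{A}_2=0$, and $\boldsymbol{A}_2^{*}\boldsymbol{A}_2={\sf \Delta}$. From $\boldsymbol{A}_1^{*}\boldsymbol{A}_1=0$ the standard part $({\bf A}_1)_{st}$ satisfies $({\bf A}_1)_{st}^{*}({\bf A}_1)_{st}=0$ over the quaternion ring, hence $({\bf A}_1)_{st}=O$, i.e. $\boldsymbol{A}_1={\bf A}_{1,in}\epsilon$ is infinitesimal. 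Write $\boldsymbol{A}_1={\bf A}_{1,in}\epsilon$ with ${\bf A}_{1,in}\in{\bf \mathbb{Q}}^{m\times s}$.

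First I would handle the appreciable block $\boldsymbol{A}_2$. Since ${\sf d}_i>0$ is appreciable, each column $\boldsymbol{a}_i$ of $\boldsymbol{A}_2$ has appreciable norm $\sqrt{{\sf d}_i}$, and the columns are mutually weakly orthogonal; because they are appreciable they form an orthogonal $t$-tuple in the sense of \cref{dqorth}. Normalizing, $\{\boldsymbol{a}_i/\sqrt{{\sf d}_i}\}$ is an orthonormal $t$-tuple, which by \cite[Proposition 3.4 and Proposition 3.11]{s9} (as used in the proof of \cref{lm:3.1}) extends to an orthonormal basis of ${\bf \mathbb{DQ}}^{m}$; collecting the complementary vectors into $\boldsymbol{W}\in{\bf \mathbb{DQ}}^{m\times(m-t)}$ gives a unitary $\widehat{\boldsymbol{U}}=(\boldsymbol{W}\ \ \boldsymbol{A}_2{\sf \Delta}^{-1/2})$ (here ${\sf \Delta}^{-1/2}={\rm diag}({\sf d}_i^{-1/2})$ makes sense by properties (4)--(5) of dual numbers). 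Then $\widehat{\boldsymbol{U}}^{*}\boldsymbol{A}_2=\begin{pmatrix}\boldsymbol{W}^{*}\boldsymbol{A}_2\\ {\sf \Delta}^{-1/2}\boldsymbol{A}_2^{*}\boldsymbol{A}_2\end{pmatrix}=\begin{pmatrix}0\\ {\sf \Delta}^{1/2}\end{pmatrix}$, the lower block $\boldsymbol{W}^{*}\boldsymbol{A}_2$ vanishing because the columns of $\boldsymbol{A}_2$ lie in the span of $\boldsymbol{A}_2{\sf \Delta}^{-1/2}$ which is orthonormal-orthogonal to $\boldsymbol{W}$. So already $\widehat{\boldsymbol{U}}^{*}\boldsymbol{A}=\begin{pmatrix}\boldsymbol{W}^{*}\boldsymbol{A}_1 & 0\\ {\sf \Delta}^{-1/2}\boldsymbol{A}_2^{*}\boldsymbol{A}_1 & {\sf \Delta}^{1/2}\end{pmatrix}$, and since $\boldsymbol{A}_2^{*}\boldsymbol{A}_1=0$ the $(2,1)$ block is zero, leaving $\widehat{\boldsymbol{U}}^{*}\boldsymbol{A}=\begin{pmatrix}\boldsymbol{W}^{*}\boldsymbol{A}_1 & 0\\ 0 & {\sf \Delta}^{1/2}\end{pmatrix}$. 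Set ${\sf \Theta}:={\sf \Delta}^{1/2}={\rm diag}(\sqrt{{\sf d}_i})$, which is diagonal with appreciable positive dual entries by property (5).

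It remains to bring the top-left block $\boldsymbol{B}:=\boldsymbol{W}^{*}\boldsymbol{A}_1\in{\bf \mathbb{DQ}}^{(m-t)\times s}$ to the form ${\bf T}\epsilon$ with ${\bf T}$ a pure quaternion matrix, using a further unitary transformation acting only on the first $m-t$ rows. But $\boldsymbol{A}_1$ is infinitesimal and $\boldsymbol{W}^{*}$ is an arbitrary dual quaternion matrix, so $\boldsymbol{B}={\bf W}_{st}^{*}{\bf A}_{1,in}\epsilon$ is automatically infinitesimal; thus $\boldsymbol{B}={\bf B}_{in}\epsilon$ for ${\bf B}_{in}={\bf W}_{st}^{*}{\bf A}_{1,in}\in{\bf \mathbb{Q}}^{(m-t)\times s}$ and we may simply take ${\bf T}={\bf B}_{in}$ and $\boldsymbol{U}_1=\widehat{\boldsymbol{U}}$, with no further reduction needed. (If one prefers a cleaner $\bf T$, e.g.\ upper trapezoidal, apply \cref{QRR} to ${\bf B}_{in}$ over ${\bf \mathbb{Q}}$ and absorb the quaternion unitary factor into the top-left block of a block-diagonal unitary dual quaternion matrix ${\rm diag}(\boldsymbol{Q}^{*},I_t)$; this does not affect the $(2,2)$ block.)

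The only genuinely delicate point is justifying that an orthonormal $t$-tuple of appreciable dual quaternion vectors can be extended to a full orthonormal basis of ${\bf \mathbb{DQ}}^{m}$ — this is the dual-quaternion Gram–Schmidt step, which is not automatic because infinitesimal obstructions can appear; I would invoke \cite[Proposition 3.4 and Proposition 3.11]{s9} exactly as the paper already does in the proof of \cref{lm:3.1}, after checking that $m-t\ge 0$ (which follows since the $t$ columns of $\boldsymbol{A}_2$ are linearly independent over ${\bf \mathbb{DQ}}$, so $t\le m$). Everything else — that a dual quaternion matrix with zero Gram matrix is infinitesimal, that ${\sf \Delta}^{\pm1/2}$ is well defined, and that the off-diagonal blocks vanish — is a direct computation using the listed arithmetic rules for dual numbers and the identity $(\boldsymbol{AB})^{*}=\boldsymbol{B}^{*}\boldsymbol{A}^{*}$.
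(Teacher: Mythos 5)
Your proof is correct and follows the same strategy as the paper's: observe that the first $s$ columns are infinitesimal, normalize the last $t$ columns using ${\sf \Delta}^{-1/2}$ to get an orthonormal $t$-tuple, extend to an orthonormal basis of ${\bf \mathbb{DQ}}^{m}$, and check that the Gram-matrix conditions force the off-diagonal blocks to vanish and the top-left block to be infinitesimal. You spell out the block computations more explicitly than the paper does and correctly flag the basis-extension step (justified by \cite[Propositions 3.4 and 3.11]{s9}) as the one nontrivial ingredient, but the argument is the same one the paper gives.
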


\begin{proof}
	The assumption \eqref{AA} implies that the columns of $\boldsymbol{A}$ are weakly orthogonal. More explicitly, the first $s$ columns of $\boldsymbol{A}$ are infinitesimal dual quaternion vectors, and the remaining $n-s$ columns are mutually orthogonal dual quaternion vectors. Starting from column normalization of the last $n-s$ columns of $\boldsymbol{A}$ we obtain a dual quaternion matrix $\hat{\boldsymbol{U}}_2$ with orthonormal columns. We then extend $\hat{\boldsymbol{U}}_2$ by adding a dual quaternion matrix $\hat{\boldsymbol{U}}_1$ with orthonormal columns to form an $m\times m$ unitary dual quaternion matrix $\hat{\boldsymbol{U}}=(\hat{\boldsymbol{U}}_1,\hat{\boldsymbol{U}}_2)$. The columns of $\hat{\boldsymbol{U}}$ afford an orthonormal basis of dual quaternion space ${\bf \mathbb{DQ}}^{m}$. Then we have
	\begin{equation*}
		\hat{\boldsymbol{U}}^{*}\boldsymbol{A}=\begin{pmatrix}
			{\bf T}\epsilon&0\\
			0& {\sf \Theta}
		\end{pmatrix},
	\end{equation*}
	where $\bf T$ is an $(m-t)\times s$ quaternion matrix.
\end{proof}

With regard to the CS decomposition of dual quaternion matrices, we present the following results.

\begin{theorem}[DQCS decomposition]\label{lm:3.3}
	Suppose that $\boldsymbol{W}\in{\bf \mathbb{DQ}}^{n \times n}$ is unitary. Partition $\boldsymbol{W}$ as follows
	\begin{equation*}
		\boldsymbol{W}=\begin{pmatrix}
			\boldsymbol{W}_{11}&\boldsymbol{W}_{12}\\
			\boldsymbol{W}_{21}&\boldsymbol{W}_{22}\\
		\end{pmatrix},
	\end{equation*}
	where $\boldsymbol{W}_{11}\in {\bf \mathbb{DQ}}^{r_1\times t_1}$, $\boldsymbol{W}_{12}\in {\bf \mathbb{DQ}}^{r_1\times t_2}$, $\boldsymbol{W}_{21}\in {\bf \mathbb{DQ}}^{r_2\times t_1}$, $\boldsymbol{W}_{22}\in {\bf \mathbb{DQ}}^{r_2\times t_2}$. Then there exist unitary matrices $\boldsymbol{U}_{1}\in {\bf \mathbb{DQ}}^{r_{1} \times r_{1}}$, $\boldsymbol{U}_{2}\in {\bf \mathbb{DQ}}^{r_{2} \times r_{2}}$, $\boldsymbol{V}_{1}\in {\bf \mathbb{DQ}}^{t_{1} \times t_{1}}$, $\boldsymbol{V}_{2}\in {\bf \mathbb{DQ}}^{t_{2} \times t_{2}}$ such that 
	\begin{equation}\label{DQCSD}
		\begin{split}
			& \begin{pmatrix}
				\boldsymbol{U}_{1}^*&0\\
				0&\boldsymbol{U}_{2}^*
			\end{pmatrix}
			\boldsymbol{W}
			\begin{pmatrix}
				\boldsymbol{V}_{1}&0\\
				0&\boldsymbol{V}_{2}
			\end{pmatrix}
			=\begin{pmatrix}
				{\sf D}_{11}&{\sf D}_{12}\\
				{\sf D}_{21}&{\sf D}_{22}
			\end{pmatrix}  \\
			&=
			\left(\begin{array}{ccccc|ccccc}
				I&&&&&{\Sigma}\epsilon&&&\\
				&I&&&&&0\\
				&&{\sf C}&&&&&{\sf S}&\\
				&&&D\epsilon&&&&&I\\
				&&&&0&&&&&I\\
				\hline
				{\Sigma}\epsilon&&&&&-I&&\\
				&0&&&&&-I\\
				&&{\sf S}&&&&&-{\sf C}&\\
				&&&I&&&&&-D\epsilon\\
				&&&&I&&&&&0
			\end{array}\right),
		\end{split}
	\end{equation}
	where
	$I$ is the identity matrix of suitable size,
	$\Sigma$, $D$ are real diagonal matrices, ${\sf C}, {\sf S}$ are appreciable dual matrices and
	\begin{equation*}
		\begin{split}
			&\Sigma={\rm diag}{(\sigma_{1},\sigma_{2},\ldots,\sigma_{r})},\ \  \sigma_{1}\ge\sigma_{2}\ge\cdots\ge\sigma_{r}>0,\\
			& D ={\rm diag}(d_1,d_2,\ldots\,d_t), \quad d_1\ge d_2\ge\cdots\ge d_t>0, \\
			& {\sf C}={\rm diag}({\sf c}_1,{\sf c}_2,\ldots\,{\sf c}_l),\quad  1>{\sf c}_1\ge {\sf c}_2\ge\cdots\ge {\sf c}_l>0, \\
			& {\sf S} ={\rm diag}({\sf s}_1,{\sf s}_2,\ldots\,{\sf s}_l),\quad  0<{\sf s}_1\le {\sf s}_2\le\cdots\le {\sf s}_l<1, \\
			& {\sf C}^2+{\sf S}^2=I_{l}.
		\end{split}
	\end{equation*}
\end{theorem}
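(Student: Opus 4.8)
First I would approach this by mimicking the classical construction of the CS decomposition of a $2\times2$ block unitary matrix, but with every ordinary SVD step replaced by the SVD of a dual quaternion matrix together with \cref{infiniorth} and \cref{PreCS}, and with the identities forced by unitarity of $\boldsymbol{W}$ used to carry the block structure through all four quadrants. I will use repeatedly that the squared norm $\langle\boldsymbol{u},\boldsymbol{u}\rangle$ of a dual quaternion vector is either an appreciable positive dual number (when $\boldsymbol{u}$ is appreciable) or exactly $0$ (when $\boldsymbol{u}$ is infinitesimal): there is no nonzero infinitesimal squared norm, and it is precisely this rigidity that forces the splitting into the five diagonal blocks of \eqref{DQCSD}.

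First I would diagonalize $\boldsymbol{W}_{11}$. Unitarity gives $\boldsymbol{W}_{11}^{*}\boldsymbol{W}_{11}=I_{t_1}-\boldsymbol{W}_{21}^{*}\boldsymbol{W}_{21}$ and $\boldsymbol{W}_{11}\boldsymbol{W}_{11}^{*}=I_{r_1}-\boldsymbol{W}_{12}\boldsymbol{W}_{12}^{*}$, so $0\le\boldsymbol{W}_{11}^{*}\boldsymbol{W}_{11}\le I_{t_1}$ and $0\le\boldsymbol{W}_{11}\boldsymbol{W}_{11}^{*}\le I_{r_1}$ in the total dual order. Diagonalizing the Hermitian dual quaternion matrix $\boldsymbol{W}_{11}^{*}\boldsymbol{W}_{11}$ (by the unitary decomposition of a Hermitian dual quaternion matrix \cite{s12}) via a unitary $\boldsymbol{V}_1$, then applying \cref{PreCS} to $\boldsymbol{W}_{11}\boldsymbol{V}_1$ --- whose columns are then weakly orthogonal, those of zero squared norm being infinitesimal --- produces a unitary $\boldsymbol{U}_1$ after which $\boldsymbol{U}_1^{*}\boldsymbol{W}_{11}\boldsymbol{V}_1$ is block-diagonal with an appreciable diagonal block $\Theta$ and an infinitesimal block ${\bf T}\epsilon$ (and a zero block). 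An appreciable singular value in $\Theta$ with standard part $1$ must equal $1$ exactly, for otherwise the image under $\boldsymbol{W}_{21}$ of the corresponding right singular vector would have squared norm $1-\sigma^{2}$, a nonzero infinitesimal dual number. Splitting $\Theta$ into that identity block together with a dual diagonal ${\sf C}$ whose standard part lies in $(0,1)$, and applying a quaternion SVD to ${\bf T}$ (absorbed into $\boldsymbol{U}_1$ and $\boldsymbol{V}_1$ on the rows and columns that \cref{PreCS} keeps disjoint from $\Theta$), brings $\boldsymbol{W}_{11}$ to ${\rm diag}(I,\,{\sf C},\,D\epsilon,\,0)$ with $D$ real positive, where ${\sf C}$ and $D$ are the appreciable and infinitesimal singular values of $\boldsymbol{W}_{11}$.

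Next I would propagate the structure to the other three quadrants. Row--orthonormality gives $\boldsymbol{U}_1^{*}\boldsymbol{W}_{12}\boldsymbol{W}_{12}^{*}\boldsymbol{U}_1=I-{\rm diag}(I,{\sf C},D\epsilon,0)\,{\rm diag}(I,{\sf C},D\epsilon,0)^{*}={\rm diag}(0,\,{\sf S}^{2},\,I)$ with ${\sf S}^{2}=I-{\sf C}^{2}$ appreciable, so the rows of $\boldsymbol{U}_1^{*}\boldsymbol{W}_{12}$ are weakly orthogonal, those over the identity block infinitesimal; repeating the previous step (now \cref{PreCS} applied to $(\boldsymbol{U}_1^{*}\boldsymbol{W}_{12})^{*}$, with \cref{infiniorth} ensuring the infinitesimal rows occupy columns disjoint from the appreciable ones, and a quaternion SVD of the leftover quaternion block) produces a unitary $\boldsymbol{V}_2$ making $\boldsymbol{U}_1^{*}\boldsymbol{W}_{12}\boldsymbol{V}_2$ assume the required pattern with a block $\Sigma\epsilon$, and simultaneously splits the identity block of $\boldsymbol{W}_{11}$ into the two identity blocks of \eqref{DQCSD} --- one carrying a nonzero infinitesimal coupling, one not. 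The mirror argument on the columns of $\boldsymbol{W}_{21}$ produces $\boldsymbol{U}_2$. A single real diagonal $\Sigma$ (and a single $D$) must serve in both off--diagonal quadrants: this is read off the infinitesimal part of $\boldsymbol{W}^{*}\boldsymbol{W}=I$, which over the identity block forces the dual part of $\boldsymbol{W}_{21}$ to be the conjugate transpose, up to sign, of the dual part of $\boldsymbol{W}_{12}$, so one quaternion SVD diagonalizes both; the same $\epsilon$--part relation shows the dual parts living inside the diagonal blocks are skew--Hermitian, hence removable by dual unitary factors $I+\boldsymbol{K}\epsilon$. Finally, with $\boldsymbol{U}_1,\boldsymbol{U}_2,\boldsymbol{V}_1,\boldsymbol{V}_2$ fixed, $\boldsymbol{U}_2^{*}\boldsymbol{W}_{22}\boldsymbol{V}_2$ is determined by the remaining column and row orthonormality relations, and a short computation identifies it block by block with $-I$, $-I$, $-{\sf C}$, $-D\epsilon$, $0$; reordering the blocks so that $\Sigma$, $D$, ${\sf C}$, ${\sf S}$ are monotone and ${\sf C}^{2}+{\sf S}^{2}=I$ completes the proof.

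I expect the main obstacle to be precisely this coordination: carrying out the quaternion SVD sub-steps on the infinitesimal pieces of $\boldsymbol{W}_{12}$ and $\boldsymbol{W}_{21}$ without undoing what \cref{PreCS} and the earlier reductions have achieved, and --- above all --- establishing via the $\epsilon$--part of the unitarity identity that those infinitesimal pieces are conjugate--transpose compatible in every block, which is what lets a single pair of real diagonal matrices $\Sigma$ and $D$ appear symmetrically and is the very feature distinguishing the dual quaternion CS decomposition from its complex and quaternion counterparts.
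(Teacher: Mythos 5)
Your proposal follows essentially the same route as the paper's proof: a dual quaternion SVD of $\boldsymbol{W}_{11}$ (which you decompose into a Hermitian diagonalization of $\boldsymbol{W}_{11}^{*}\boldsymbol{W}_{11}$ plus an application of \cref{PreCS}), then \cref{PreCS} together with a quaternion SVD on the infinitesimal pieces of $\boldsymbol{W}_{21}$ and $\boldsymbol{W}_{12}$, and finally the unitarity relations to pin down $\boldsymbol{W}_{22}$ and align the infinitesimal coupling blocks $\Sigma\epsilon$. The only notable bookkeeping difference is in that last step --- you diagonalize the two infinitesimal off-diagonal quaternion pieces independently and invoke the $\epsilon$-part of the unitarity identity to force the two resulting real diagonals to coincide, whereas the paper diagonalizes only the piece from $\boldsymbol{W}_{21}$ and afterwards absorbs the unitary $-\boldsymbol{X}_{11}^{*}$, read off from a residual row-orthogonality constraint in \eqref{DD}, into $\boldsymbol{V}_2$ --- and your explicit rigidity remark (that an appreciable singular value of $\boldsymbol{W}_{11}$ with standard part $1$ must equal $1$ exactly, since a squared dual-quaternion vector norm cannot be a nonzero infinitesimal) usefully spells out a step that the paper leaves implicit.
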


\begin{proof}
	Let $\hat{\boldsymbol{U}}_{1}^*\boldsymbol{W}_{11}\hat{\boldsymbol{V}}_{1}={\sf D}_{11}$ be the DQSVD of $\boldsymbol{W}_{11}$ with $\hat{\boldsymbol{U}}_{1}\in{\bf \mathbb{DQ}}^{r_{1} \times r_{1}}$ and $\hat{\boldsymbol{V}}_{1}\in {\bf \mathbb{DQ}}^{t_{1} \times t_{1}}$ being unitary. Since $\boldsymbol{W}$ is a unitary dual quaternion matrix, $\boldsymbol{W}_{11}^*\boldsymbol{W}_{11}+\boldsymbol{W}_{21}^*\boldsymbol{W}_{21}=I_{t_1}$, $\boldsymbol{W}_{11}^*\boldsymbol{W}_{11}$ and $\boldsymbol{W}_{21}^*\boldsymbol{W}_{21}$ are both $t_1\times t_1$ Hermitian positive semidefinite matrices. Then the eigenvalues of $\boldsymbol{W}_{11}^*\boldsymbol{W}_{11}$ and $\boldsymbol{W}_{21}^*\boldsymbol{W}_{21}$ are all dual numbers within the interval $[0,1]$, and ${\sf D}_{11}$ has the form in \eqref{DQCSD}. Since
	\begin{equation*}
		(\boldsymbol{W}_{21}\hat{\boldsymbol{V}}_{1})^{*}(\boldsymbol{W}_{21}\hat{\boldsymbol{V}}_{1})=I_{r_1}-{\sf D}_{11}^{*}{\sf D}_{11}=
		\begin{pmatrix}
			{0_p}&&\\
			&I_l-{\sf C}^2&\\
			&&I_{t_1-l-p}
		\end{pmatrix},
	\end{equation*}
	the column vectors of $\boldsymbol{W}_{21}\hat{\boldsymbol{V}}_1$ are weakly orthogonal while the first $p$ columns are infinitesimal. From \cref{PreCS} there exists an $r_2\times r_2$ unitary dual quaternion matrix $\boldsymbol{U}_{2}^{(1)}$ such that  
	\begin{equation*}
		(\boldsymbol{U}_{2}^{(1)})^{*}\boldsymbol{W}_{21}\hat{\boldsymbol{V}}_{1}=
		\begin{blockarray}{ccc}
			p & t_{1}-p &  \\
			\begin{block}{(cc)c}
				{\bf T}\epsilon & 0 & r_2-t_1+p \\
				0 & {\sf \Theta} &  t_1-p  \\
			\end{block}\\
		\end{blockarray},
	\end{equation*}
	where ${\bf T}$ is an $(r_2-t_1+p)\times p$ quaternion matrix, and ${\sf \Theta}={\rm diag}({\sf h}_{1},\ldots,{\sf h}_{t_1-p})$ is a dual matrix with ${\sf h}_{i}$ $(>0)$ being the appreciable dual numbers for $i=1,2,\ldots,t_1-p$.
	For the QSVD of $\bf T$, there exist unitary quaternion matrices ${\bf P}\in{\bf \mathbb{Q}}^{(r_2-t_1+p)\times (r_2-t_1+p)}$, ${\bf Q}\in{\bf \mathbb{Q}}^{p\times p}$ such that 
	\begin{equation*}
		{\bf P}^{*}{\bf TQ}=\begin{pmatrix}
			\Sigma& 0\\
			0&0
		\end{pmatrix},
	\end{equation*}
	where $\Sigma={\rm diag}{(\sigma_{1},\sigma_{2},\ldots,\sigma_{r})}$ is a real matrix with $\sigma_{1}\ge\sigma_{2}\ge\cdots\ge\sigma_{r}>0$, and $r={\rm rank}({\bf T})$.
	Let 
	\begin{equation*}
		{\boldsymbol{U}}_{1}=\hat{\boldsymbol{U}}_{1}\begin{pmatrix}
			{\bf Q}&0 \\
			0&I_{r_1-p}
		\end{pmatrix},\quad
		{\boldsymbol{U}}_{2}=\boldsymbol{U}_{2}^{(1)}\begin{pmatrix}
			{\bf P}&0\\
			0&I_{t_1-p}
		\end{pmatrix},\quad{\boldsymbol{V}}_{1}=\hat{\boldsymbol{V}}_{1}\begin{pmatrix}
			{\bf Q}&0\\
			0&I_{t_1-p}
		\end{pmatrix}.
	\end{equation*}
	Then we have ${\boldsymbol{U}}_{1}^{*}\boldsymbol{W}_{11}{\boldsymbol{V}}_{1}={\sf D}_{11}$, ${\boldsymbol{U}}_{2}^{*}\boldsymbol{W}_{21}{\boldsymbol{V}}_{1}={\sf D}_{21}$.
	Similarly, there exists a unitary matrix $\widetilde{\boldsymbol{V}}_{2}\in{\bf \mathbb{DQ}}^{t_{2} \times t_{2}}$ such that ${\boldsymbol{U}}_{1}^*\boldsymbol{W}_{12}\widetilde{\boldsymbol{V}}_{2}=\hat{\boldsymbol D}_{12}$, where $\hat{\boldsymbol D}_{12}={\rm diag}({\bf T}_{1}\epsilon,{\sf S},I,I)$ as given in the following \eqref{DD}. Let ${\boldsymbol{U}}_{2}^*\boldsymbol{W}_{22}\widetilde{\boldsymbol{V}}_{2}=\hat{\boldsymbol D}_{22}$. Then we have
	\begin{equation}\label{DD}
		\begin{split}
			&\begin{pmatrix}
				\widetilde{\boldsymbol{U}}_{1}^*&0\\
				0&\widetilde{\boldsymbol{U}}_{2}^*\\
			\end{pmatrix}
			\begin{pmatrix}
				\boldsymbol{W}_{11}&\boldsymbol{W}_{12}\\
				\boldsymbol{W}_{21}&\boldsymbol{W}_{22}\\
			\end{pmatrix}
			\begin{pmatrix}
				\widetilde{\boldsymbol{V}}_{1}&0\\
				0&\widetilde{\boldsymbol{V}}_{2}\\
			\end{pmatrix}  \\
			&=
			\begin{pmatrix}
				{\sf D}_{11}&\hat{\boldsymbol D}_{12}\\
				{\sf D}_{21}&\hat{\boldsymbol D}_{22}
			\end{pmatrix}  
			=\left(\begin{array}{cccc|cccc}
				I&&&&{\bf T}_{1}\epsilon&&\\
				&{\sf C}&&&&{\sf S}&\\
				&&D\epsilon&&&&I\\
				&&&0&&&&I\\
				\hline
				\begin{pmatrix}
					{\Sigma}\epsilon&0\\
					0&0
				\end{pmatrix}&&&&\boldsymbol{X}_{11}&\boldsymbol{X}_{12}&\boldsymbol{X}_{13}&\boldsymbol{X}_{14}\\
				&{\sf S}&&&\boldsymbol{X}_{21}&\boldsymbol{X}_{22}&\boldsymbol{X}_{23}&\boldsymbol{X}_{24}\\
				&&I&&\boldsymbol{X}_{31}&\boldsymbol{X}_{32}&\boldsymbol{X}_{33}&\boldsymbol{X}_{34}\\
				&&&I&\boldsymbol{X}_{41}&\boldsymbol{X}_{42}&\boldsymbol{X}_{43}&\boldsymbol{X}_{44}
			\end{array}\right),
		\end{split}
	\end{equation}
	which is also a unitary dual quaternion matrix. 
	
	Considering the right-hand side matrix in \eqref{DD}, the column orthogonality between the second, third, fourth block columns and the last four block columns implies $\boldsymbol{X}_{21}, \boldsymbol{X}_{31}, \boldsymbol{X}_{41}$, $\boldsymbol{X}_{32}, \boldsymbol{X}_{42}$, $\boldsymbol{X}_{23}, \boldsymbol{X}_{43}$, $\boldsymbol{X}_{24}, \boldsymbol{X}_{34}, \boldsymbol{X}_{44}$ to be zero matrices and $\boldsymbol{X}_{22}=-{\sf C}, \boldsymbol{X}_{33}=-D\epsilon$. Similarly, the row orthogonality between the second, third, fourth block rows and the fifth block row implies $\boldsymbol{X}_{12}, \boldsymbol{X}_{13}, \boldsymbol{X}_{14}$ to be zero matrices. The identity of the fifth block row gives $\boldsymbol{X}_{11}\boldsymbol{X}_{11}^{*}=I$. Eventually, the row orthogonality between the fourth block row and the first one leads to
	\begin{equation*}
		-{\bf T}_{1}\boldsymbol{X}_{11}^{*}\epsilon
		=\begin{pmatrix}
			{\Sigma}\epsilon&0\\  0&0
		\end{pmatrix}^{T}.
	\end{equation*} 
	
	Let 
	${\boldsymbol{V}}_{2}=\widetilde{\boldsymbol{V}}_{2}{\rm diag}(-\boldsymbol{X}_{11}^{*},I,I,I)$. 
	Then we have the following decomposition
	\begin{equation*}
		\begin{aligned}
			&\begin{pmatrix}
				{\boldsymbol{U}}_{1}^*&0\\
				0&{\boldsymbol{U}}_{2}^*\\
			\end{pmatrix}
			\begin{pmatrix}
				\boldsymbol{W}_{11}&\boldsymbol{W}_{12}\\
				\boldsymbol{W}_{21}&\boldsymbol{W}_{22}\\
			\end{pmatrix}
			\begin{pmatrix}
				{\boldsymbol{V}}_{1}&0\\
				0&\boldsymbol{V}_{2}\\
			\end{pmatrix}\\    
			&=
			\left(\begin{array}{ccccc|ccccc}
				I&&&&&{\Sigma}\epsilon&&\\
				&I&&&&&0\\
				&&{\sf C}&&&&&{\sf S}&\\
				&&&D\epsilon&&&&&I\\
				&&&&0&&&&&I\\
				\hline
				{\Sigma}\epsilon&&&&&-I&&&\\
				&0&&&&&-I\\
				&&{\sf S}&&&&&-{\sf C}&\\
				&&&I&&&&&-D\epsilon&\\
				&&&&I&&&&&0
			\end{array}\right)\\
			&\equiv
			\begin{pmatrix}
				{\sf D}_{11}&{\sf D}_{12}\\
				{\sf D}_{21}&{\sf D}_{22}
			\end{pmatrix},
		\end{aligned}
	\end{equation*}
	which is the equalities given in \eqref{DQCSD}.
\end{proof}

\begin{remark}\label{coroCS}
	Through the proof of \cref{lm:3.3}, if we partition the unitary dual quaternion matrix $\boldsymbol{W}$ to be 2-by-1 blocked matrix, we can obtain the CS decomposition of the 2-by-1 blocked unitary dual quaternion matrix as follows
	\begin{equation*}
		\boldsymbol{W}=\begin{pmatrix}
			\boldsymbol{W}_{11}\\
			\boldsymbol{W}_{21}
		\end{pmatrix}=\begin{pmatrix}
			\boldsymbol{U}_{1}&0\\
			0&\boldsymbol{U}_{2}
		\end{pmatrix}\begin{pmatrix}
			{\sf D}_{11}\\
			{\sf D}_{21}
		\end{pmatrix}\boldsymbol{V}_1^*.
	\end{equation*}
\end{remark}

\begin{remark}\label{rem:3.4}
	If the matrix $\boldsymbol{W}$ in \cref{lm:3.3} is a unitary quaternion matrix, then the blocked row or column containing the infinitesimal elements in \eqref{DQCSD} naturally vanish. As a result, \cref{lm:3.3} is in accordance with the CS decomposition of the unitary quaternion matrix.
\end{remark}

\section{DQGSVD of a dual quaternion matrix pair}\label{section4}

In this section, we put forward several types of GSVD of dual quaternion matrices in accordance with their dimensions.
For the given dual quaternion matrix pair $\{{\boldsymbol A}, {\boldsymbol B}\}$, we investigate two forms of their quotient-type SVD (DQGSVD) which can be selected to use in different scenarios, their product-type SVD (DQPSVD) and canonical correlation decomposition (DQCCD).  We also characterize the relations between these several types GSVD and those in the real number field, the complex number field, and even the quaternion ring.

In order to derive the GSVD of a dual quaternion matrix pair, it is imperative to possess a thorough understanding of the SVD of a dual quaternion matrix. 
\begin{lemma}[\cite{s12}, Theorem 6.1, DQSVD]\label{lm:4.1}
	Suppose that $\boldsymbol{A} \in {\bf \mathbb{DQ}}^{m \times n}$. There exists a unitary dual quaternion matrices $\boldsymbol{U} \in {\bf \mathbb{DQ}}^{m \times m}$ and $\boldsymbol{V} \in {\bf \mathbb{DQ}}^{n \times n}$ such that 
	$$\boldsymbol{U}^{*}\boldsymbol{AV}=\begin{pmatrix}
		{\sf \Sigma}_{t}&0\\
		0&0 
	\end{pmatrix},$$
	where ${\sf \Sigma}_{t}={\rm diag}({\sf \mu}_{1}, \ldots, {\sf \mu}_{r}, \ldots, {\sf \mu}_{t})\in {\bf \mathbb{D}}^{t \times t}$,  $r \le t \le \min \{m,n\}$, ${\sf \mu}_{1} \ge {\sf \mu}_{2} \ge \cdots \ge {\sf \mu}_{r}$ are positive appreciable dual numbers, and ${\sf \mu}_{r+1} \ge {\sf \mu}_{r+2} \ge \cdots \ge {\sf \mu}_{t}$ are positive infinitesimal dual numbers. Counting possible multiplicities of the diagonal entries, the form of ${\sf \Sigma}_{t}$ is unique and $r={\rm Arank}(\boldsymbol{A}),\ t={\rm rank}(\boldsymbol{A})$.
\end{lemma}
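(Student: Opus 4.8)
The plan is to bootstrap from the quaternion SVD (Zhang \cite{Zhang1997}) and the unitary diagonalization of dual quaternion Hermitian matrices (Qi--Luo \cite{s12}), exploiting the splitting $\boldsymbol{A}=\mathbf{A}_{st}+\mathbf{A}_{in}\epsilon$ and the relation $\epsilon^{2}=0$. The decomposition has two layers: the standard part produces the appreciable singular values, and a residual quaternion matrix produces the infinitesimal ones.

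\textbf{Reduction of the standard part.} First I would apply the quaternion SVD to $\mathbf{A}_{st}$, obtaining quaternion-unitary $\mathbf{U}_{0},\mathbf{V}_{0}$ with $\mathbf{U}_{0}^{*}\mathbf{A}_{st}\mathbf{V}_{0}=\mathrm{diag}(\Sigma_{r},0)$, where $\Sigma_{r}$ is real positive diagonal and $r=\mathrm{rank}(\mathbf{A}_{st})$. Regarding $\mathbf{U}_{0},\mathbf{V}_{0}$ as unitary dual quaternion matrices, $\mathbf{U}_{0}^{*}\boldsymbol{A}\mathbf{V}_{0}=\mathrm{diag}(\Sigma_{r},0)+\mathbf{B}\epsilon$ with $\mathbf{B}=\mathbf{U}_{0}^{*}\mathbf{A}_{in}\mathbf{V}_{0}$, partitioned conformally as $(\mathbf{B}_{ij})_{i,j=1}^{2}$. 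Next, left- and right-multiplication by infinitesimal unitary dual quaternion matrices of the form $I+\mathbf{X}\epsilon$ (unitary precisely when $\mathbf{X}^{*}=-\mathbf{X}$, since $\epsilon^{2}=0$) eliminate the off-diagonal blocks: left-multiplying by $\left(\begin{smallmatrix} I & \mathbf{C}^{*}\epsilon \\ -\mathbf{C}\epsilon & I \end{smallmatrix}\right)$ with $\mathbf{C}=\mathbf{B}_{21}\Sigma_{r}^{-1}$ kills $\mathbf{B}_{21}$, a symmetric right multiplication kills $\mathbf{B}_{12}$, and $\epsilon^{2}=0$ guarantees the $(2,2)$ block is unaffected. (Here invertibility of $\Sigma_{r}$, hence $r\ge 1$, is used; if $r=0$ this step is vacuous.) What remains is the block-diagonal matrix $\mathrm{diag}(\boldsymbol{M},\ \mathbf{B}_{22}\epsilon)$ with $\boldsymbol{M}=\Sigma_{r}+\mathbf{B}_{11}\epsilon$.

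\textbf{Treating the two blocks.} The block $\boldsymbol{M}$ is appreciable, so $\boldsymbol{M}^{*}\boldsymbol{M}$ is an appreciable Hermitian positive-definite dual quaternion matrix; by \cite{s12} there is a unitary $\boldsymbol{V}'$ with $(\boldsymbol{V}')^{*}\boldsymbol{M}^{*}\boldsymbol{M}\boldsymbol{V}'=\mathrm{diag}(\mathsf{d}_{1},\ldots,\mathsf{d}_{r})$, each $\mathsf{d}_{i}$ appreciable positive. Setting $\mu_{i}=\sqrt{\mathsf{d}_{i}}$ (well-defined and appreciable positive by property (5), and invertible by property (4)) and $\boldsymbol{U}'=\boldsymbol{M}\boldsymbol{V}'\,\mathrm{diag}(\mu_{1},\ldots,\mu_{r})^{-1}$, a direct check gives $(\boldsymbol{U}')^{*}\boldsymbol{U}'=I$ and $(\boldsymbol{U}')^{*}\boldsymbol{M}\boldsymbol{V}'=\mathrm{diag}(\mu_{1},\ldots,\mu_{r})$. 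For the block $\mathbf{B}_{22}\epsilon$, I would apply the quaternion SVD to $\mathbf{B}_{22}\in{\bf \mathbb{Q}}^{(m-r)\times(n-r)}$: quaternion-unitary $\mathbf{P},\mathbf{Q}$ with $\mathbf{P}^{*}\mathbf{B}_{22}\mathbf{Q}=\mathrm{diag}(\nu_{1},\ldots,\nu_{t-r},0,\ldots,0)$, $\nu_{j}>0$ and $t-r=\mathrm{rank}(\mathbf{B}_{22})$, whence $\mathbf{P}^{*}(\mathbf{B}_{22}\epsilon)\mathbf{Q}$ is a diagonal of positive infinitesimal dual numbers $\nu_{j}\epsilon$ followed by zeros. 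Composing all left and right unitary factors yields $\boldsymbol{U},\boldsymbol{V}$, and a final row/column permutation arranges the diagonal so the appreciable $\mu_{1}\ge\cdots\ge\mu_{r}$ precede the infinitesimal $\mu_{r+1}\ge\cdots\ge\mu_{t}$ and then zeros. Since unitary multiplication preserves rank and appreciable rank, $r=\mathrm{Arank}(\boldsymbol{A})=\mathrm{rank}(\mathbf{A}_{st})$ and $t=\mathrm{rank}(\boldsymbol{A})$.

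\textbf{Uniqueness and the main obstacle.} For uniqueness of the form of ${\sf \Sigma}_{t}$: the $\mu_{i}^{2}$ with $i\le r$ are precisely the nonzero eigenvalues (with multiplicity) of the Hermitian positive semidefinite $\boldsymbol{A}^{*}\boldsymbol{A}$, determined by the dual quaternion spectral theory, while the coefficients $\nu_{j}$ of the infinitesimal values coincide with the nonzero singular values of the quaternion matrix $\mathbf{P}_{\ker(\mathbf{A}_{st}^{*})}\,\mathbf{A}_{in}\,\mathbf{P}_{\ker(\mathbf{A}_{st})}$ (orthogonal projectors over ${\bf \mathbb{Q}}$), which is $\mathbf{B}_{22}$ up to quaternion-unitary equivalence and hence has invariant singular values. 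The main obstacle is exactly the point that makes the infinitesimal part delicate: because an infinitesimal singular value squares to zero, it is invisible to $\boldsymbol{A}^{*}\boldsymbol{A}$, so the Hermitian spectral theorem alone cannot produce the full SVD; one must genuinely peel off the standard part first and then decompose the leftover quaternion matrix on the cokernel/kernel of $\mathbf{A}_{st}$, and one must verify that the infinitesimal unitary manipulations used to decouple the blocks do not recontaminate them --- which is where $\epsilon^{2}=0$ is doing the work.
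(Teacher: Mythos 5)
This lemma is cited in the paper from Qi--Luo \cite{s12}, Theorem~6.1, and the paper does not reproduce a proof of it; so there is nothing internal to compare your argument against. Your reconstruction is correct and self-contained, and it follows what is essentially the only natural route: quaternion SVD of the standard part $\mathbf{A}_{st}$, block decoupling via infinitesimal unitaries of the form $I+\mathbf{X}\epsilon$ (with $\mathbf{X}^{*}=-\mathbf{X}$, which is where $\epsilon^{2}=0$ is used), and separate treatment of the appreciable block $\boldsymbol{M}=\Sigma_{r}+\mathbf{B}_{11}\epsilon$ and the purely infinitesimal block $\mathbf{B}_{22}\epsilon$.

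The computations all check: the block-elimination with $\mathbf{C}=\mathbf{B}_{21}\Sigma_{r}^{-1}$ and $\mathbf{D}=\Sigma_{r}^{-1}\mathbf{B}_{12}$ does kill the off-diagonal blocks without disturbing the $(2,2)$ block; the definition $\boldsymbol{U}'=\boldsymbol{M}\boldsymbol{V}'\operatorname{diag}(\mu_{i})^{-1}$ yields $(\boldsymbol{U}')^{*}\boldsymbol{U}'=I$, and for square dual quaternion matrices a one-sided identity implies two-sided unitarity (the $\epsilon$-order condition follows from the standard-part one by conjugation with $\mathbf{U}'_{st}$), so $\boldsymbol{U}'$ is genuinely unitary; and the positivity/appreciability of the $\mathsf{d}_{i}$ follows because the standard part of $\boldsymbol{M}^{*}\boldsymbol{M}$ is $\Sigma_{r}^{2}\succ 0$. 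The ordering step is actually automatic rather than requiring a permutation, since under the total order introduced in \cref{section2} every appreciable positive dual number exceeds every infinitesimal one, so the concatenation $\mu_{1}\ge\cdots\ge\mu_{r}$ followed by the $\nu_{j}\epsilon$'s already has the required monotone form.

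You correctly flag the main subtlety: $\boldsymbol{A}^{*}\boldsymbol{A}$ alone is blind to the infinitesimal singular values because $(\nu_j\epsilon)^{2}=0$, so the Hermitian spectral theorem of \cite{s12} cannot deliver the full SVD without first peeling off the standard part. Your uniqueness argument is also sound: the appreciable $\mu_i^2$ are the nonzero eigenvalues of $\boldsymbol{A}^*\boldsymbol{A}$, while the $\nu_j$ are the singular values of $\mathbf{A}_{in}$ compressed to $\ker(\mathbf{A}_{st}^*)\times\ker(\mathbf{A}_{st})$, an object defined intrinsically up to quaternion-unitary equivalence and hence with well-defined singular values. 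One cosmetic remark: you could have stated explicitly that $r=\operatorname{rank}(\mathbf{A}_{st})$ agrees with the paper's definition of $\operatorname{Arank}(\boldsymbol{A})$ and that $t=r+\operatorname{rank}(\mathbf{B}_{22})$ agrees with $\operatorname{rank}(\boldsymbol{A})$, both of which are preserved under unitary (in particular, appreciable-invertible) factors; but the assertion as you wrote it is correct.
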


Suppose the dual quaternion matrices $\boldsymbol{A}$ and $\boldsymbol{B}$ have the same number of columns. In the following we embark on the derivation of the first quotient-type DQGSVD of the dual quaternion matrix pair $\{\boldsymbol{A}, \boldsymbol{B}\}$, which is closely related to the rank or appreciable rank of the 2-by-1 blocked dual quaternion matrix $\boldsymbol{C}=\begin{pmatrix}
	\boldsymbol{A}\\
	\boldsymbol{B}
\end{pmatrix}$.

\begin{theorem}[DQGSVD1]\label{thm:4.3}
	Let $\boldsymbol{C}=\begin{pmatrix}
		\boldsymbol{A}\\
		\boldsymbol{B}
	\end{pmatrix}\in {\bf \mathbb{DQ}}^{(m+p) \times n}$ with $\boldsymbol{A}\in {\bf \mathbb{DQ}}^{m \times n}$, $\boldsymbol{B}\in {\bf \mathbb{DQ}}^{p \times n}$, and ${\rm rank}(\boldsymbol{C})=k$, ${\rm Arank}(\boldsymbol{C})=t$. There exist two unitary dual quaternion matrices $\boldsymbol{U}\in {\bf \mathbb{DQ}}^{m \times m}, \boldsymbol{V}\in {\bf \mathbb{DQ}}^{p \times p}$ and a dual quaternion matrix $\boldsymbol{X}\in {\bf \mathbb{DQ}}^{n \times n}$ such that 
	\begin{equation}\label{eq:AB1}
		\boldsymbol{A}=\boldsymbol{U}({\sf \Sigma}_{\boldsymbol{A}}, \  0)\boldsymbol{X}, \  \
		\boldsymbol{B}=\boldsymbol{V}({\sf \Sigma}_{\boldsymbol{B}}, \ 0)\boldsymbol{X},
	\end{equation}
	where
	\begin{subequations}\label{eq:SigAB}
		\begin{align}
			&{\sf \Sigma}_{\boldsymbol{A}}=
			\begin{blockarray}{ccccc}
				r & q & t_{1} & k-r-q-t_{1}  \\
				\begin{block}{(cccc)c}
					I&0  &0&0&r\\
					0& \ {\sf S}_{\boldsymbol{A}}  &0&0&q \\
					0&0&\Xi\epsilon&0&t_{1}\\
					0 &0 &0&0&m-r-q-t_{1}\\
				\end{block}
			\end{blockarray},\\ 
			&{\sf \Sigma}_{\boldsymbol{B}}=
			\begin{blockarray}{cccccc}
				r_1 &r-r_1 & q & t_{1} & k-r-q-t_{1}  \\
				\begin{block}{(ccccc)c}
					{\Sigma}\epsilon&0&0&0&0&r_2\\
					0&0&0&0&0&p+r-k-r_2\\
					0&0&\ {\sf S}_{\boldsymbol{B}}   &0&0&q \\
					0&0&0&I&0&t_1\\
					0&0&0&0&I& k-r-q-t_{1}\\
				\end{block}
			\end{blockarray}.
		\end{align}
	\end{subequations}
	$\Sigma$, $\Xi_{\boldsymbol{A}}$ are real diagonal matrices, ${\sf S}_{\boldsymbol{A}}$ and ${\sf S}_{\boldsymbol{B}}$ are appreciable dual matrices and
	\begin{equation*}
		\begin{split}
			\Sigma&={\rm diag}{(\sigma_{1},\sigma_{2},\ldots,\sigma_{r_{1}})},\quad \sigma_{1}\ge\sigma_{2}\cdots\ge\sigma_{r_{1}}>0, \\
			\Xi&={\rm diag}(\xi_1,\xi_2,\ldots,\xi_{t_{1}}), \xi_1\ge \xi_2\ge\cdots\ge \xi_{t_{1}}>0,\\
			{\sf S}_{\boldsymbol{A}} &={\rm diag}({\sf c}_{1}, {\sf c}_{2}, \ldots, {\sf c}_{q}),\ 0<{\sf c}_{1}\le \cdots\le {\sf c}_{q}<1,  \\
			{\sf S}_{\boldsymbol{B}} &={\rm diag}({\sf s}_{1}, {\sf s}_{2}, \ldots, {\sf s}_{q}),\ 1>{\sf s}_{1}\ge\cdots\ge {\sf s}_{q}>0,\\
			&\qquad {\sf c}_{i}^{2}+{\sf s}_{i}^2=1,\ i=1, 2,\ldots, q.
		\end{split}
	\end{equation*}
	More precisely, there exist two unitary dual quaternion matrices $\hat{\boldsymbol{U}}\in {\bf \mathbb{DQ}}^{m \times m}, \hat{\boldsymbol{V}}\in {\bf \mathbb{DQ}}^{p \times p}$ and a nonsingular dual quaternion matrix $\hat{\boldsymbol{X}}\in {\bf \mathbb{DQ}}^{n \times n}$ such that
	\begin{equation}\label{eq:AB2}
		\boldsymbol{A}=\hat{\boldsymbol{U}}(\hat{\sf \Sigma}_{\boldsymbol{A}},  \boldsymbol{N}_{\boldsymbol{A}}\epsilon,0)\hat{\boldsymbol{X}}, \  \
		\boldsymbol{B}=\hat{\boldsymbol{V}}(\hat{{\sf \Sigma}}_{\boldsymbol{B}},\boldsymbol{N}_{\boldsymbol{B}}\epsilon, 0)\hat{\boldsymbol{X}},
	\end{equation}
	where the blocked dual quaternion matrix 
	$\begin{blockarray}{cc}
		s &  \\
		\begin{block}{(c)c}
			\boldsymbol{N}_{\boldsymbol{A}} &  m\\
			\boldsymbol{N}_{\boldsymbol{B}}  &  p  \\
		\end{block}
	\end{blockarray}$ 
	has orthonormal columns,
	\begin{subequations}\label{SigABhat}
		\begin{align}
			&\hat{{\sf \Sigma}}_{\boldsymbol{A}}=
			\begin{blockarray}{ccccc}
				\hat{r} & \hat{q} & \hat{t} & t-\hat{r}-\hat{q}-\hat{t} \\
				\begin{block}{(cccc)c}
					I&0  &0&0&\hat{r}\\
					0& \ \hat{{\sf S}}_{\boldsymbol{A}}  &0&0&\hat{q} \\
					0&0&\hat{\Xi}\epsilon&0&\hat{t}\\
					0 &0 &0&0&m-\hat{r}-\hat{q}-\hat{t}\\
				\end{block}
			\end{blockarray},\\ 
			&\hat{{\sf \Sigma}}_{\boldsymbol{B}}=
			\begin{blockarray}{cccccc}
				\hat{r}_1 &\hat{r}-\hat{r}_1 & \hat{q} & \hat{t} & t-\hat{r}-\hat{q}-\hat{t}  \\
				\begin{block}{(ccccc)c}
					\hat{\Sigma}\epsilon&0&0&0&0&\hat{r}_2\\
					0&0&0&0&0&p+\hat{r}-t-\hat{r}_2\\
					0&0&\  \hat{{\sf S}}_{\boldsymbol{B}}   &0&0&\hat{q} \\
					0&0&0&I&0&\hat{t}\\
					0&0&0&0&I& t-\hat{r}-\hat{q}-\hat{t}\\
				\end{block}
			\end{blockarray}.
		\end{align}
	\end{subequations}
	$\hat{\Sigma}$, $\hat{\Xi}$ are real diagonal matrices, $\hat{{\sf S}}_{\boldsymbol{A}}$ and $\hat{{\sf S}}_{\boldsymbol{B}}$ are appreciable dual matrices and
	\begin{equation*}
		\begin{split}
			\hat{\Sigma}&={\rm diag}{(\hat{\sigma}_{1},\hat{\sigma}_{2},\ldots,\hat{\sigma}_{\hat{r}_{1}})},\quad \hat{\sigma}_{1}\ge \hat{\sigma}_{2}\ge\cdots\ge \hat{\sigma}_{\hat{r}_{1}}>0,\\
			\hat{\Xi}&={\rm diag}(\hat{\xi}_1,\hat{\xi}_2,\ldots\,\hat{\xi}_{\hat{t}}),\ \hat{\xi}_1\ge \hat{\xi}_2\ge\cdots\ge \hat{\xi}_{\hat{t}}>0,\\
			\hat{{\sf S}}_{\boldsymbol{A}} &={\rm diag}(\hat{{\sf c}}_{1}, \hat{{\sf c}}_{2}, \ldots, \hat{{\sf c}}_{\hat{q}}),\ 0<\hat{{\sf c}}_{1}\le \cdots\le \hat{{\sf c}}_{\hat{{q}}}<1,  \\
			\hat{{\sf S}}_{\boldsymbol{B}} &={\rm diag}(\hat{{\sf s}}_{1}, \hat{{\sf s}}_{2}, \ldots, \hat{{\sf s}}_{\hat{q}}),\ 1>\hat{{\sf s}}_{1}\ge\cdots\ge \hat{{\sf s}}_{\hat{q}}>0,\\
			&\qquad \hat{{\sf c}}_{i}^{2}+\hat{{\sf s}}_{i}^2=1,\ i=1,2,\ldots,\hat{q}.
		\end{split}
	\end{equation*}
\end{theorem}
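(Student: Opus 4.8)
The plan is to assemble both forms from two tools already available in the excerpt: the DQSVD of the stacked matrix $\boldsymbol{C}$ (\cref{lm:4.1}) and the DQCS decomposition of a matrix with orthonormal columns split into two row blocks (\cref{lm:3.3}, in the $2$-by-$1$ shape of \cref{coroCS}). The guiding picture, as in the complex case, is: factor $\boldsymbol{C}$ so that its column space is carried by a matrix $\boldsymbol{P}_1$ with orthonormal columns, split $\boldsymbol{P}_1$ into its $\boldsymbol{A}$-rows and its $\boldsymbol{B}$-rows, CS-decompose this $2$-by-$1$ block, and push everything else into the right factor. The dual structure creates a real fork: either we keep all $k={\rm rank}(\boldsymbol{C})$ singular directions together, which forces the right factor to inherit the infinitesimal singular values and hence makes it singular (this produces \eqref{eq:AB1}), or we peel off the $k-t$ infinitesimal directions into a separate infinitesimal block $\boldsymbol{N}_{\boldsymbol{A}}\epsilon,\boldsymbol{N}_{\boldsymbol{B}}\epsilon$, which buys back a nonsingular right factor (this produces \eqref{eq:AB2}).

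\emph{First form.} Using \cref{lm:4.1}, write the DQSVD of $\boldsymbol{C}$ in compact shape $\boldsymbol{C}=\boldsymbol{P}_1{\sf \Sigma}_{\boldsymbol{C}}\boldsymbol{Q}_1^{*}$, where $\boldsymbol{P}_1\in{\bf \mathbb{DQ}}^{(m+p)\times k}$ and $\boldsymbol{Q}_1\in{\bf \mathbb{DQ}}^{n\times k}$ are the first $k$ columns of the unitary factors $\boldsymbol{U}_{\boldsymbol{C}},\boldsymbol{V}_{\boldsymbol{C}}$ and ${\sf \Sigma}_{\boldsymbol{C}}={\rm diag}({\sf \mu}_1,\ldots,{\sf \mu}_k)$ is positive dual with its first $t$ diagonal entries appreciable. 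Partition $\boldsymbol{P}_1=\begin{pmatrix}\boldsymbol{G}_{\boldsymbol{A}}\\ \boldsymbol{G}_{\boldsymbol{B}}\end{pmatrix}$ conformally with the row splitting of $\boldsymbol{C}$, so $\boldsymbol{A}=\boldsymbol{G}_{\boldsymbol{A}}{\sf \Sigma}_{\boldsymbol{C}}\boldsymbol{Q}_1^{*}$ and $\boldsymbol{B}=\boldsymbol{G}_{\boldsymbol{B}}{\sf \Sigma}_{\boldsymbol{C}}\boldsymbol{Q}_1^{*}$. Applying \cref{lm:3.3} to $\boldsymbol{U}_{\boldsymbol{C}}$ with row partition $(m,p)$ and column partition $(k,\,m+p-k)$ produces unitary $\boldsymbol{U}\in{\bf \mathbb{DQ}}^{m\times m}$, $\boldsymbol{V}\in{\bf \mathbb{DQ}}^{p\times p}$, $\boldsymbol{Z}\in{\bf \mathbb{DQ}}^{k\times k}$ with $\boldsymbol{U}^{*}\boldsymbol{G}_{\boldsymbol{A}}\boldsymbol{Z}={\sf D}_{11}$ and $\boldsymbol{V}^{*}\boldsymbol{G}_{\boldsymbol{B}}\boldsymbol{Z}={\sf D}_{21}$ in the CS shape of \eqref{DQCSD}. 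Then $\boldsymbol{X}=\begin{pmatrix}\boldsymbol{Z}^{*}{\sf \Sigma}_{\boldsymbol{C}}\boldsymbol{Q}_1^{*}\\ \ast\end{pmatrix}\in{\bf \mathbb{DQ}}^{n\times n}$ (last $n-k$ rows arbitrary, e.g.\ the remaining rows of $\boldsymbol{V}_{\boldsymbol{C}}^{*}$) gives $\boldsymbol{A}=\boldsymbol{U}({\sf D}_{11},0)\boldsymbol{X}$, $\boldsymbol{B}=\boldsymbol{V}({\sf D}_{21},0)\boldsymbol{X}$. Renaming ${\sf D}_{11}$ as ${\sf \Sigma}_{\boldsymbol{A}}$ and ${\sf D}_{21}$ as ${\sf \Sigma}_{\boldsymbol{B}}$ completes the identification: on the $\boldsymbol{A}$-side the two identity blocks of \eqref{DQCSD} merge into a single $I_r$, on the $\boldsymbol{B}$-side they survive as the blocks ${\Sigma}\epsilon$ and $0$, and the parameters $r,q,t_1,r_1,r_2$ and the diagonal matrices ${\Sigma},\Xi,{\sf S}_{\boldsymbol{A}},{\sf S}_{\boldsymbol{B}}$ are those of \cref{lm:3.3}, up to a flip permutation inside the ${\sf C}/{\sf S}$ block to meet the stated monotonicities.

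\emph{Second form.} Split ${\sf \Sigma}_{\boldsymbol{C}}={\rm diag}({\sf \Sigma}_{a},\,\Gamma\epsilon)$ with ${\sf \Sigma}_{a}={\rm diag}({\sf \mu}_1,\ldots,{\sf \mu}_t)$ appreciable and $\Gamma$ a positive real diagonal matrix of order $k-t$, and split $\boldsymbol{P}_1=(\boldsymbol{P}_{a},\boldsymbol{P}_{b})$, $\boldsymbol{Q}_1=(\boldsymbol{Q}_{a},\boldsymbol{Q}_{b})$ accordingly, so $\boldsymbol{A}=\boldsymbol{P}_a^{\boldsymbol{A}}{\sf \Sigma}_{a}\boldsymbol{Q}_{a}^{*}+\boldsymbol{P}_b^{\boldsymbol{A}}\Gamma\epsilon\,\boldsymbol{Q}_{b}^{*}$ and similarly for $\boldsymbol{B}$, with $\boldsymbol{P}_{a}=\begin{pmatrix}\boldsymbol{P}_a^{\boldsymbol{A}}\\ \boldsymbol{P}_a^{\boldsymbol{B}}\end{pmatrix}$, $\boldsymbol{P}_{b}=\begin{pmatrix}\boldsymbol{P}_b^{\boldsymbol{A}}\\ \boldsymbol{P}_b^{\boldsymbol{B}}\end{pmatrix}$. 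Now invoke \cref{lm:3.3} on $\boldsymbol{U}_{\boldsymbol{C}}$ with column partition $(t,\,m+p-t)$, so the CS decomposition only sees the appreciable block $\boldsymbol{P}_a$: it yields unitary $\hat{\boldsymbol{U}},\hat{\boldsymbol{V}},\hat{\boldsymbol{Z}}$ with $\hat{\boldsymbol{U}}^{*}\boldsymbol{P}_a^{\boldsymbol{A}}\hat{\boldsymbol{Z}}=\hat{{\sf D}}_{11}$, $\hat{\boldsymbol{V}}^{*}\boldsymbol{P}_a^{\boldsymbol{B}}\hat{\boldsymbol{Z}}=\hat{{\sf D}}_{21}$. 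Since multiplication by $\epsilon$ annihilates dual parts, $\hat{\boldsymbol{U}}^{*}\boldsymbol{P}_b^{\boldsymbol{A}}\Gamma\epsilon\,\boldsymbol{Q}_b^{*}=\boldsymbol{N}_{\boldsymbol{A}}\,\Gamma\epsilon\,\boldsymbol{Q}_b^{*}$ with $\boldsymbol{N}_{\boldsymbol{A}}:=(\hat{\boldsymbol{U}}_{st})^{*}(\boldsymbol{P}_b^{\boldsymbol{A}})_{st}$ a quaternion matrix, and likewise $\boldsymbol{N}_{\boldsymbol{B}}:=(\hat{\boldsymbol{V}}_{st})^{*}(\boldsymbol{P}_b^{\boldsymbol{B}})_{st}$; then $\begin{pmatrix}\boldsymbol{N}_{\boldsymbol{A}}\\ \boldsymbol{N}_{\boldsymbol{B}}\end{pmatrix}={\rm diag}\bigl((\hat{\boldsymbol{U}}_{st})^{*},(\hat{\boldsymbol{V}}_{st})^{*}\bigr)(\boldsymbol{P}_{b})_{st}$ has orthonormal columns, being a unitary quaternion matrix times a quaternion matrix with orthonormal columns. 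Taking
\[\hat{\boldsymbol{X}}=\begin{pmatrix}\hat{\boldsymbol{Z}}^{*}{\sf \Sigma}_{a}\boldsymbol{Q}_{a}^{*}\\ \Gamma\boldsymbol{Q}_{b}^{*}\\ \boldsymbol{Q}_{c}^{*}\end{pmatrix}\in{\bf \mathbb{DQ}}^{n\times n},\]
with $\boldsymbol{Q}_{c}$ the remaining $n-k$ columns of $\boldsymbol{V}_{\boldsymbol{C}}$, a direct computation gives $\boldsymbol{A}=\hat{\boldsymbol{U}}(\hat{{\sf D}}_{11},\boldsymbol{N}_{\boldsymbol{A}}\epsilon,0)\hat{\boldsymbol{X}}$, $\boldsymbol{B}=\hat{\boldsymbol{V}}(\hat{{\sf D}}_{21},\boldsymbol{N}_{\boldsymbol{B}}\epsilon,0)\hat{\boldsymbol{X}}$ with $s=k-t$, and $\hat{{\sf D}}_{11},\hat{{\sf D}}_{21}$ are renamed $\hat{{\sf \Sigma}}_{\boldsymbol{A}},\hat{{\sf \Sigma}}_{\boldsymbol{B}}$ exactly as in the first form.

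\emph{The crux.} The one genuine point is that $\hat{\boldsymbol{X}}$ is nonsingular: its standard part stacks $(\hat{\boldsymbol{Z}}_{st})^{*}({\sf \Sigma}_{a})_{st}(\boldsymbol{Q}_{a})_{st}^{*}$, $\Gamma(\boldsymbol{Q}_{b})_{st}^{*}$, $(\boldsymbol{Q}_{c})_{st}^{*}$, and because $(\boldsymbol{Q}_{a})_{st},(\boldsymbol{Q}_{b})_{st},(\boldsymbol{Q}_{c})_{st}$ jointly form a unitary quaternion matrix, $({\sf \Sigma}_{a})_{st},\Gamma$ are positive real diagonal, and $\hat{\boldsymbol{Z}}_{st}$ is unitary, that standard part is invertible over ${\bf \mathbb{Q}}$; hence $\hat{\boldsymbol{X}}$ is nonsingular. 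This is exactly what fails for the first form, where a single right factor must carry all of ${\sf \Sigma}_{\boldsymbol{C}}$, whose infinitesimal tail leaves its standard part rank-deficient, and it is why the blocks $\boldsymbol{N}_{\boldsymbol{A}}\epsilon,\boldsymbol{N}_{\boldsymbol{B}}\epsilon$ cannot be avoided. What remains is bookkeeping: matching the block sizes of \eqref{DQCSD} with those in the displayed forms of ${\sf \Sigma}_{\boldsymbol{A}},{\sf \Sigma}_{\boldsymbol{B}},\hat{{\sf \Sigma}}_{\boldsymbol{A}},\hat{{\sf \Sigma}}_{\boldsymbol{B}}$, merging the two identity blocks on the $\boldsymbol{A}$-side, verifying the row-size identities (such as $r_2+(p+r-k-r_2)+q+t_1+(k-r-q-t_1)=p$), and arranging monotonicity by flip permutations; the degenerate cases $k=0$ and $t=0$ are immediate.
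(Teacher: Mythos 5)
Your argument follows the same route as the paper's proof: apply the DQSVD to the stacked matrix $\boldsymbol{C}$, then feed the first $k$ (respectively first $t$) columns of the resulting unitary factor into the DQCS decomposition, push the remaining factors into the right-hand matrix $\boldsymbol{X}$, and observe that absorbing all $k$ singular directions leaves $\boldsymbol{X}$ with an infinitesimal tail (hence possibly singular), whereas peeling off the $s=k-t$ infinitesimal directions into the $\boldsymbol{N}_{\boldsymbol{A}}\epsilon,\boldsymbol{N}_{\boldsymbol{B}}\epsilon$ blocks restores nonsingularity of $\hat{\boldsymbol{X}}$. The only cosmetic divergence is that you define $\boldsymbol{N}_{\boldsymbol{A}},\boldsymbol{N}_{\boldsymbol{B}}$ via standard parts while the paper keeps them as full dual quaternion matrices $\hat{\boldsymbol{U}}^{*}\hat{\boldsymbol{P}}_{12},\hat{\boldsymbol{V}}^{*}\hat{\boldsymbol{P}}_{22}$; since they are multiplied by $\epsilon$ this is immaterial.
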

\begin{proof}
	Applying \cref{lm:4.1} to the the DQSVD of $\boldsymbol C$, there exist unitary  dual quaternion matrices $\boldsymbol{P}\in{\bf \mathbb{DQ}}^{(m+p) \times (m+p)}$ and $\boldsymbol{Q}\in{\bf \mathbb{DQ}}^{n\times n}$ such that 
	\begin{equation}\label{SVDP}
		\boldsymbol{P}^{*}\boldsymbol{CQ}=\begin{pmatrix}
			{\sf \Sigma}_{\boldsymbol{C}}&0\\
			0&0
		\end{pmatrix}\equiv 
		\left(\begin{array}{cc|c}
			{\sf \Sigma}_{t}&0&0\\
			0&\Sigma_{s}\epsilon&0\\
			\hline
			0&0&0
		\end{array}\right),
	\end{equation} 
	where ${\sf \Sigma}_{\boldsymbol{C}}={\rm diag}({\sf g}_1, \ldots, {\sf g}_k)$, and the positive dual numbers ${\sf g}_1,\ldots, {\sf g}_k$ are singular values of $\boldsymbol{C}$, $s+t=k$. 
	
	Partition $\boldsymbol{P}$ in \eqref{SVDP} to be the following 2-by-2 blocked matrix  
	\begin{equation}\label{P2blk}
		\boldsymbol{P}=\begin{blockarray}{ccc}
			k &  m+p-k\\
			\begin{block}{(cc)c}
				\boldsymbol{P}_{11}&\boldsymbol{P}_{12}&m  \\
				\boldsymbol{P}_{21}&\boldsymbol{P}_{22}&p  \\
			\end{block}
		\end{blockarray}.
	\end{equation} 
	Considering the first block column of $\boldsymbol{P}$ in \eqref{P2blk}, from \cref{coroCS} we obtain the following DQCS decomposition 	
	\begin{equation*}
		\begin{pmatrix}
			\boldsymbol{P}_{11}\\
			\boldsymbol{P}_{21}
		\end{pmatrix}=
		\begin{pmatrix}
			\boldsymbol{U}&0\\
			0&\boldsymbol{V}
		\end{pmatrix}
		\begin{pmatrix}
			{\sf \Sigma}_{\boldsymbol{A}}\\
			{\sf \Sigma}_{\boldsymbol{B}}
		\end{pmatrix}
		\boldsymbol{W}^*,
	\end{equation*}
	where $\boldsymbol{U}\in{\bf \mathbb{DQ}}^{m \times m}$, $\boldsymbol{V}\in{\bf \mathbb{DQ}}^{p \times p}$ and $\boldsymbol{W}\in{\bf \mathbb{DQ}}^{k \times k}$,
	${\sf \Sigma}_{\boldsymbol{A}}$ and ${\sf \Sigma}_{\boldsymbol{B}}$ are given by \eqref{eq:SigAB}. Then, the first equality of \eqref{SVDP} becomes 
	\begin{equation}\label{csGsvd}
		\begin{pmatrix}
			{\boldsymbol{A}}\\
			{\boldsymbol{B}}
		\end{pmatrix}{\boldsymbol{Q}}=
		\begin{blockarray}{ccc}
			k&   n-k &  \\
			\begin{block}{(cc)c}
				{\boldsymbol{U}}{\sf \Sigma}_{\boldsymbol{A}}\boldsymbol{W}^*{{\sf \Sigma}_{\boldsymbol{C}}}  \  &  \  0  &  m\\
				{\boldsymbol{V}}{\sf \Sigma}_{\boldsymbol{B}}\boldsymbol{W}^*{{\sf \Sigma}_{\boldsymbol{C}}}  \  &  \  0   &  p  \\
			\end{block}
		\end{blockarray}.
	\end{equation}
	By taking \begin{equation}\label{QSVDX1}
		\boldsymbol{X}={\rm diag}(\boldsymbol{W}^{*}{\sf \Sigma}_{\boldsymbol{C}},\,I_{n-k})\boldsymbol{Q}^{*}
	\end{equation} 
	we obtain the decomposition of the form \eqref{eq:AB1}. It is necessary to point out that the nonsingularity of $\boldsymbol{X}$ is without guarantee here. 
	
	Alternatively, partition $\boldsymbol{P}$ in \eqref{SVDP} to be the following 2-by-3 blocked matrix  
	\begin{equation}\label{P3blk}
		\boldsymbol{P}=\begin{blockarray}{cccc}
			t&   s &  m+p-k\\
			\begin{block}{(ccc)c}
				\hat{\boldsymbol{P}}_{11}& \hat{\boldsymbol{P}}_{12}& \hat{\boldsymbol{P}}_{13}&m  \\
				\hat{\boldsymbol{P}}_{21}&  \hat{\boldsymbol{P}}_{22}&  \hat{\boldsymbol{P}}_{23}&p  \\
			\end{block}
		\end{blockarray}.
	\end{equation} 
	From \cref{coroCS} we obtain the DQCS decomposition of the first block column of $\boldsymbol{P}$ in \eqref{P3blk} to be
	\begin{equation}\label{P3CS}
		\begin{pmatrix}
			\hat{\boldsymbol{P}}_{11}\\
			\hat{\boldsymbol{P}}_{21}
		\end{pmatrix}=
		\begin{pmatrix}
			\hat{\boldsymbol{U}}&0\\
			0&\hat{\boldsymbol{V}}
		\end{pmatrix}
		\begin{pmatrix}
			\hat{{\sf \Sigma}}_{\boldsymbol{A}}\\
			\hat{{\sf \Sigma}}_{\boldsymbol{B}}
		\end{pmatrix}
		\hat{\boldsymbol{W}}^*,
	\end{equation} 
	where $\hat{\boldsymbol{U}}\in{\bf \mathbb{DQ}}^{m \times m}$, $\hat{\boldsymbol{V}}\in{\bf \mathbb{DQ}}^{p \times p}$ and $\hat{\boldsymbol{W}}\in{\bf \mathbb{DQ}}^{t \times t}$, $\hat{{\sf \Sigma}}_{\boldsymbol{A}}$ and $\hat{{\boldsymbol \Sigma}}_{\boldsymbol{B}}$ are given by \eqref{SigABhat}.
	Substituting \eqref{P3blk} and \eqref{P3CS} into \eqref{csGsvd} and recalling the explicit form of ${\sf \Sigma}_{\boldsymbol{C}}={\rm diag}({\sf \Sigma}_{t}, \Sigma_{s}\epsilon)$, we have
	\begin{equation*}
		\begin{split}
			\begin{pmatrix}
				{\boldsymbol{A}}\\
				{\boldsymbol{B}}
			\end{pmatrix}{\boldsymbol{Q}}&=
			\begin{pmatrix}
				\hat{\boldsymbol{P}}_{11}{{\sf \Sigma}_{t}}  \  &\hat{\boldsymbol{P}}_{12}{\Sigma}_{s}\epsilon&  \  0\\
				\hat{\boldsymbol{P}}_{21}{{\sf \Sigma}_{t}}  \  & \hat{\boldsymbol{P}}_{22}{\Sigma}_{s}\epsilon&  \  0
			\end{pmatrix}=
			\begin{blockarray}{cccc}
				t&  s & n-k &  \\
				\begin{block}{(ccc)c}
					\hat{\boldsymbol{U}}\hat{{\sf \Sigma}}_{\boldsymbol{A}}\hat{\boldsymbol{W}}^*{\sf \Sigma}_{t}&\hat{\boldsymbol{P}}_{12}\Sigma_{s}\epsilon&0 &  m\\
					\hat{\boldsymbol{V}}\hat{{\boldsymbol \Sigma}}_{\boldsymbol{B}}\hat{\boldsymbol{W}}^*{\sf \Sigma}_{t}&\hat{\boldsymbol{P}}_{22}\Sigma_{s}\epsilon&0 &  p  \\
				\end{block}
			\end{blockarray}\\
			&=\begin{pmatrix}
				\hat{\boldsymbol{U}}& 0\\
				0 &\hat{\boldsymbol{V}}
			\end{pmatrix}
			\begin{pmatrix}
				\hat{{\sf \Sigma}}_{\boldsymbol{A}}\hat{\boldsymbol{W}}^*{\sf \Sigma}_{t}&\hat{\boldsymbol{U}}^*\hat{\boldsymbol{P}}_{12}\Sigma_{s}\epsilon&0\\
				\hat{{\sf{\Sigma}}}_{\boldsymbol{B}}\hat{\boldsymbol{W}}^*{\sf \Sigma}_{t}&\hat{\boldsymbol{V}}^*\hat{\boldsymbol{P}}_{22}\Sigma_{s}\epsilon&0	
			\end{pmatrix}.
		\end{split}
	\end{equation*}
	
	By taking 
	\begin{equation}\label{XXhat}
		\hat{\boldsymbol{X}}={\rm diag}(\hat{\boldsymbol{W}}^*{\sf \Sigma}_{t},\Sigma_{s},I_{n-k})\boldsymbol{Q}^{*},\ \boldsymbol{N}_{\boldsymbol{A}}=\hat{\boldsymbol{U}}^*\hat{\boldsymbol{P}}_{12},\ \boldsymbol{N}_{\boldsymbol{B}}=\hat{\boldsymbol{V}}^*\hat{\boldsymbol{P}}_{22},
	\end{equation} 
	we obtain the decomposition of the form \eqref{eq:AB2}. It is easy to verify that $\hat{\boldsymbol{X}}$ is a nonsingular dual quaternion matrix, and $\boldsymbol{N}_{\boldsymbol{A}}^*\boldsymbol{N}_{\boldsymbol{A}}+\boldsymbol{N}_{\boldsymbol{B}}^*\boldsymbol{N}_{\boldsymbol{B}}=I_s$.
\end{proof}

Distinct from \cref{thm:4.3}, the other quotient-type DQGSVD of the dual quaternion matrix pair $\{\boldsymbol{A}, \boldsymbol{B}\}$ is closely related to the appreciable rank of the 2-by-1 blocked dual quaternion matrix 
$\boldsymbol{C}=\begin{pmatrix}
	\boldsymbol{A}\\
	\boldsymbol{B}
\end{pmatrix}$.

\begin{theorem}[DQGSVD2]\label{DDQGSVD}
	Let	$\boldsymbol{C}=\begin{pmatrix}
		\boldsymbol{A}\\
		\boldsymbol{B}
	\end{pmatrix} \in {\bf \mathbb{DQ}}^{(m+p) \times n} 	$
	with $\boldsymbol{A} \in {\bf \mathbb{DQ}}^{m \times n}, \boldsymbol{B} \in {\bf \mathbb{DQ}}^{p \times n}$, and ${\rm Arank}(\boldsymbol{C})=t$. 
	There exist two unitary dual quaternion matrices $\boldsymbol{U}\in {\bf \mathbb{DQ}}^{m \times m}, \boldsymbol{V}\in {\bf \mathbb{DQ}}^{p \times p}$  and a dual quaternion matrix $\boldsymbol{X}\in {\bf \mathbb{DQ}}^{n \times n}$, such that 
	\begin{equation}\label{eq:AB3}
		\boldsymbol{U}^{*}\boldsymbol{A}\boldsymbol{X}=
		({{\sf \Sigma}_{\boldsymbol{A}}}, \  0), \  \
		\boldsymbol{V}^{*}\boldsymbol{BX}=
		({\sf \Sigma}_{\boldsymbol{B}}, \  0),
	\end{equation}
	where
	\begin{equation*}
		\begin{aligned}
			{\sf \Sigma}_{\boldsymbol{A}}
			&= \begin{blockarray}{ccccc}
				r& q & l  & t-(r+q+l)&   \\
				\begin{block}{(cccc)c}
					I &0  &0 &0  & r  \\
					0&\ {\sf S}_{\boldsymbol{A}}  &0 &0  & q\\
					0&0&{\Xi}\epsilon&0  & l \\
					0&0&0&0  &  m-r-q-l   \\
				\end{block}
			\end{blockarray},  \\
			{{\sf \Sigma}}_{\boldsymbol{B}}&=
			\begin{blockarray}{ccccc}
				r_1& r-r_1 & q & t-(r+q)\\
				\begin{block}{(cccc)c}
					{\Sigma}\epsilon&0&0&0&r_2\\
					0&0&0&0&p-t+r-r_2\\
					0&0&{\sf S}_{\boldsymbol{B}}&0&q\\
					0&0&0&I&t-(r+q)\\
				\end{block}
			\end{blockarray}.
		\end{aligned}
	\end{equation*}
	$\Sigma$, $\Xi$ are real diagonal matrices,  ${\sf S}_{\boldsymbol{A}}$ and ${\sf S}_{\boldsymbol{B}}$ are appreciable dual matrices and
	\begin{equation*}
		\begin{split}
			\Sigma&={\rm diag}{(\sigma_{1},\sigma_{2},\ldots,\sigma_{r_{1}})}, \  \   \sigma_{1}\ge\sigma_{2}\ge\cdots\ge\sigma_{r_{1}}>0,\\
			\Xi &={\rm diag}(\xi_1,\xi_2,\ldots,\xi_l),\ \xi_1\ge \xi_2\ge\cdots\ge \xi_l>0,   \\
			{\sf S}_{\boldsymbol{A}} &={\rm diag}({\sf c}_{1}, {\sf c}_{2}, \ldots, {\sf c}_{q}),\ 0<{\sf c}_{1}\le \cdots\le {\sf c}_{q}<1, \\
			{\sf S}_{\boldsymbol{B}} &={\rm diag}({\sf s}_{1}, {\sf s}_{2}, \ldots, {\sf s}_{q}),\ 1>{\sf s}_{1}\ge\cdots\ge{\sf s}_{q}>0, \\
			&\qquad {\sf c}_{i}^{2}+{\sf s}_{i}^2=1,\ i=1,2,\ldots,q.
		\end{split}
	\end{equation*}
\end{theorem}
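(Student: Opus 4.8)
The plan is to follow the same two-ingredient recipe as in the proof of \cref{thm:4.3} --- the DQSVD of $\boldsymbol{C}$ together with the DQCS decomposition of a $2$-by-$1$ block --- but this time to peel off from the DQSVD of $\boldsymbol{C}$ \emph{only its appreciable singular-value block}, which is invertible, and then to let $\boldsymbol{X}$ annihilate all the remaining (infinitesimal and null) directions. First I would apply \cref{lm:4.1} to $\boldsymbol{C}$ to get unitary $\boldsymbol{P}\in{\bf \mathbb{DQ}}^{(m+p)\times(m+p)}$, $\boldsymbol{Q}\in{\bf \mathbb{DQ}}^{n\times n}$ with
\begin{equation*}
	\boldsymbol{P}^{*}\boldsymbol{C}\boldsymbol{Q}=\left(\begin{array}{cc|c}
		{\sf \Sigma}_{t}&0&0\\
		0&{\sf \Gamma}_{s}\epsilon&0\\
		\hline
		0&0&0
	\end{array}\right),
\end{equation*}
where $k={\rm rank}(\boldsymbol{C})$, $s=k-t$, ${\sf \Sigma}_{t}={\rm diag}({\sf g}_{1},\ldots,{\sf g}_{t})$ is a diagonal matrix of positive appreciable dual numbers --- so ${\sf \Sigma}_{t}^{-1}$ exists and is again a diagonal appreciable dual matrix --- and ${\sf \Gamma}_{s}$ is a positive real diagonal matrix. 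Partitioning $\boldsymbol{P}$ conformably as the $2$-by-$3$ block matrix $\boldsymbol{P}=\begin{pmatrix}\hat{\boldsymbol{P}}_{11}&\hat{\boldsymbol{P}}_{12}&\hat{\boldsymbol{P}}_{13}\\\hat{\boldsymbol{P}}_{21}&\hat{\boldsymbol{P}}_{22}&\hat{\boldsymbol{P}}_{23}\end{pmatrix}$, with block columns of widths $t,\, s,\, m+p-k$ and block rows of sizes $m,\, p$, the identity above becomes
\begin{equation*}
	\begin{pmatrix}\boldsymbol{A}\\\boldsymbol{B}\end{pmatrix}\boldsymbol{Q}=
	\begin{pmatrix}
		\hat{\boldsymbol{P}}_{11}{\sf \Sigma}_{t}&\hat{\boldsymbol{P}}_{12}{\sf \Gamma}_{s}\epsilon&0\\
		\hat{\boldsymbol{P}}_{21}{\sf \Sigma}_{t}&\hat{\boldsymbol{P}}_{22}{\sf \Gamma}_{s}\epsilon&0
	\end{pmatrix}.
\end{equation*}

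Next I would run the DQCS decomposition on the first block column. Since $\boldsymbol{P}$ is unitary, $\begin{pmatrix}\hat{\boldsymbol{P}}_{11}\\\hat{\boldsymbol{P}}_{21}\end{pmatrix}$ has orthonormal columns, so \cref{coroCS} (the $2$-by-$1$ case of \cref{lm:3.3}) produces unitary matrices $\boldsymbol{U}\in{\bf \mathbb{DQ}}^{m\times m}$, $\boldsymbol{V}\in{\bf \mathbb{DQ}}^{p\times p}$, $\boldsymbol{W}\in{\bf \mathbb{DQ}}^{t\times t}$ with $\hat{\boldsymbol{P}}_{11}=\boldsymbol{U}{\sf \Sigma}_{\boldsymbol{A}}\boldsymbol{W}^{*}$ and $\hat{\boldsymbol{P}}_{21}=\boldsymbol{V}{\sf \Sigma}_{\boldsymbol{B}}\boldsymbol{W}^{*}$, where ${\sf \Sigma}_{\boldsymbol{A}}$ and ${\sf \Sigma}_{\boldsymbol{B}}$ are, after the obvious (de)merging of adjacent identity blocks, exactly the condensed forms written in the statement; the block-size parameters $r,q,l,r_{1},r_{2}$ are read off from the DQCS structure precisely as in the proof of \cref{thm:4.3}, and the relation ${\sf \Sigma}_{\boldsymbol{A}}^{*}{\sf \Sigma}_{\boldsymbol{A}}+{\sf \Sigma}_{\boldsymbol{B}}^{*}{\sf \Sigma}_{\boldsymbol{B}}=I_{t}$ furnished by \cref{lm:3.3} is what forces ${\sf c}_{i}^{2}+{\sf s}_{i}^{2}=1$.

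Finally I would set
\begin{equation*}
	\boldsymbol{X}=\boldsymbol{Q}\begin{pmatrix}
		{\sf \Sigma}_{t}^{-1}\boldsymbol{W}&0\\
		0&0
	\end{pmatrix}\in{\bf \mathbb{DQ}}^{n\times n},
\end{equation*}
the zero blocks having whatever sizes make the matrix $n\times n$. Right-multiplying the second displayed equation by this matrix wipes out the ${\sf \Gamma}_{s}\epsilon$- and $0$-block columns and turns the first block column into $\hat{\boldsymbol{P}}_{11}{\sf \Sigma}_{t}\,{\sf \Sigma}_{t}^{-1}\boldsymbol{W}=\hat{\boldsymbol{P}}_{11}\boldsymbol{W}=\boldsymbol{U}{\sf \Sigma}_{\boldsymbol{A}}$ in the top $m$ rows and $\hat{\boldsymbol{P}}_{21}\boldsymbol{W}=\boldsymbol{V}{\sf \Sigma}_{\boldsymbol{B}}$ in the bottom $p$ rows; reading the two row blocks separately gives $\boldsymbol{U}^{*}\boldsymbol{A}\boldsymbol{X}=({\sf \Sigma}_{\boldsymbol{A}},\,0)$ and $\boldsymbol{V}^{*}\boldsymbol{B}\boldsymbol{X}=({\sf \Sigma}_{\boldsymbol{B}},\,0)$, which is \eqref{eq:AB3}.

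The step I expect to require the most care is the deliberate choice to scale by ${\sf \Sigma}_{t}^{-1}$ alone --- by the inverse of only the appreciable part of the spectrum of $\boldsymbol{C}$ --- rather than by the inverse of the full singular-value block ${\rm diag}({\sf \Sigma}_{t},{\sf \Gamma}_{s}\epsilon)$, which simply does not exist because infinitesimal dual numbers are not invertible. This is exactly the obstruction that, in \cref{thm:4.3}, forced one either to surrender the nonsingularity of $\boldsymbol{X}$ (the first form) or to carry along the auxiliary columns $\boldsymbol{N}_{\boldsymbol{A}}\epsilon$, $\boldsymbol{N}_{\boldsymbol{B}}\epsilon$ (the second form); here the trade-off is that $\boldsymbol{X}$ has rank at most $t$ and is therefore generically singular, which is all the cleaner right-hand side of \eqref{eq:AB3} costs. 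The remaining work is routine bookkeeping, identical in spirit to \cref{thm:4.3}: checking that the condensed patterns of ${\sf \Sigma}_{\boldsymbol{A}}$ and ${\sf \Sigma}_{\boldsymbol{B}}$ displayed in the statement are exactly the first block column of the DQCS form of \cref{lm:3.3}, and tracking the row-size parameters (such as $r_{2}$ and $p-t+r-r_{2}$).
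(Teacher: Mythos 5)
Your proof is correct and its skeleton is the same as the paper's --- DQSVD of $\boldsymbol{C}$, peel off the appreciable block $\boldsymbol{\Sigma}_t$, exploit the orthonormality of the first $t$ columns of $\boldsymbol{P}$, and build $\boldsymbol{X}$ by scaling only with $\boldsymbol{\Sigma}_t^{-1}$ and zeroing the rest. The one place you diverge is in how the diagonal structure of $\hat{\boldsymbol{P}}_{11},\hat{\boldsymbol{P}}_{21}$ is obtained: you invoke \cref{coroCS} (the $2$-by-$1$ DQCS decomposition) directly, exactly as the paper does for the \emph{second form} of \cref{thm:4.3}, so that your derivation of DQGSVD2 becomes essentially the DQGSVD1 argument with a more aggressive truncation of $\boldsymbol{X}$ (discarding the $\boldsymbol{N}_{\boldsymbol{A}}\epsilon,\boldsymbol{N}_{\boldsymbol{B}}\epsilon$ columns instead of carrying them). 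The paper's own proof of \cref{DDQGSVD} never mentions the DQCS theorem: it sets $\boldsymbol{A}_1=\boldsymbol{A}\boldsymbol{Q}_1\boldsymbol{\Sigma}_t^{-1}$, $\boldsymbol{B}_1=\boldsymbol{B}\boldsymbol{Q}_1\boldsymbol{\Sigma}_t^{-1}$ (which are precisely your $\hat{\boldsymbol{P}}_{11},\hat{\boldsymbol{P}}_{21}$), takes the DQSVD of $\boldsymbol{A}_1$, uses $\boldsymbol{A}_1^*\boldsymbol{A}_1+\boldsymbol{B}_1^*\boldsymbol{B}_1=I_t$ together with \cref{PreCS} to shape $\boldsymbol{B}_1\boldsymbol{W}$, and then performs a QSVD on the residual quaternion block $\bf T$ --- in effect replaying, inline, the internals of the proof of \cref{lm:3.3}. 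The two presentations are thus logically equivalent; yours buys brevity and makes explicit the kinship between DQGSVD1 and DQGSVD2, while the paper's re-derivation keeps the proof self-contained and makes the role of $\rm Arank$ (as opposed to $\rm rank$) visible at each step. Only two small cosmetic gaps remain in your write-up: you wave at ``the obvious (de)merging of adjacent identity blocks,'' which in the paper is the explicit $\hat{\bf Q},\hat{\bf P}$ conjugation of \eqref{Tsvd}--\eqref{pai}; and the paper's $\boldsymbol{X}$ in \eqref{GsvXU} keeps $I_{n-t-s}$ in the trailing diagonal block rather than zero --- immaterial since $\boldsymbol{A}\boldsymbol{Q}_3=\boldsymbol{B}\boldsymbol{Q}_3=0$, but worth matching if you want to quote the exact factor the paper uses.
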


\begin{proof}	
	Applying \cref{lm:4.1} we perform the DQSVD of the dual quaternion matrix $\boldsymbol{C}$. There exist unitary dual quaternion matrices $\boldsymbol{P}\in {\bf \mathbb{DQ}}^{(m+p) \times (m+p)}$ and $\boldsymbol{Q}\in {\bf \mathbb{DQ}}^{n \times n}$ such that
	\begin{equation}\label{eq:PCQ}
		\boldsymbol{P}^{*}\boldsymbol{CQ}=
		\begin{pmatrix}
			{\sf \Sigma}_{t}&0  &0\\
			0&{\Sigma}_{s}\epsilon  &0 \\
			0&0  &0\\
		\end{pmatrix}, 
	\end{equation}
	where ${\sf \Sigma}_{t}={\rm diag}({\sf g}_{1}, {\sf g}_{2}, \ldots, {\sf g}_{t})$, ${\sf g}_{i}$ are all  appreciable dual numbers for $i=1, 2,\ldots,t$, and ${\Sigma_{s}}={\rm diag}(\nu_{1}, \nu_{2}, \ldots, \nu_{s})$, $\nu_{j}$ are all positive real numbers for $j=1, 2,\ldots,s,$ and $t={\rm Arank}(\boldsymbol{C})$, $t+s={\rm rank}(\boldsymbol{C})$.
	
	Let $\boldsymbol{Q}=(\boldsymbol{Q}_{1},\ \boldsymbol{Q}_{2},\ \boldsymbol{Q}_{3})$ with $\boldsymbol{Q}_{1}\in{\bf \mathbb{DQ}}^{n \times t}$, $\boldsymbol{Q}_{2}\in{\bf \mathbb{DQ}}^{n \times s}$,
	\begin{align}\label{DQGSVDeq1}
		\boldsymbol{A}_{1}&=\boldsymbol{A}\boldsymbol{Q}_{1}{\sf \Sigma}_{t}^{-1}\in {\bf \mathbb{DQ}}^{m \times t},
	\end{align}
	\begin{align}\label{DQGSVDeq2}
		\boldsymbol{B}_{1}&=\boldsymbol{B}\boldsymbol{Q}_{1}{\sf \Sigma}_{t}^{-1}\in {\bf \mathbb{DQ}}^{p \times t}.
	\end{align}
	
	From \eqref{eq:PCQ} we have $\boldsymbol{A}\boldsymbol{Q}_{3}=\boldsymbol{BQ}_{3}=0$. Performing the DQSVD on $\boldsymbol{A}_1$, there exist unitary dual quaternion matrices $\hat{\boldsymbol{U}}\in{\bf \mathbb{DQ}}^{m \times m}, \boldsymbol{W}\in{\bf \mathbb{DQ}}^{t \times t}$ such that 
	\begin{equation}\label{DQGSVDeq3}
		\begin{split}
			\hat{\boldsymbol{U}}^{*}\boldsymbol{A}_1\boldsymbol{W}
			&= \begin{blockarray}{ccccc}
				r& q & l  & t-(r+q+l)&   \\
				\begin{block}{(cccc)c}
					I&0  &0 &0  & r  \\
					0&\ {\sf S}_{\boldsymbol{A}}  &0 &0  & q\\
					0&0&\Xi\epsilon&0  & l \\
					0&0&0&0  &  m-r-q-l   \\
				\end{block}
			\end{blockarray}  \\
			&\equiv  
			\hat{\sf \Sigma}_{\boldsymbol{A}}.
		\end{split}
	\end{equation}
	
	From \eqref{DQGSVDeq1} and \eqref{DQGSVDeq3} we have 
	\begin{equation}\label{AGsv}
		\hat{\boldsymbol{U}}^{*}\boldsymbol{A}(\boldsymbol{Q}_{1},\ \boldsymbol{Q}_{2},\  \boldsymbol{Q}_{3})\begin{pmatrix}
			{\sf \Sigma}_{t}^{-1}\boldsymbol{W}& 0 &0\\
			0&0  & 0 \\
			0  &0& I_{n-t-s}\\
		\end{pmatrix}=({\hat{\sf \Sigma}_{\boldsymbol{A}}}, \  0).
	\end{equation}
	
	Combining  \eqref{DQGSVDeq1}-\eqref{DQGSVDeq2} with \eqref{eq:PCQ} it is easy to obtain that 	  
	$\boldsymbol{A}_{1}^{*}\boldsymbol{A}_{1}+\boldsymbol{B}_{1}^{*}\boldsymbol{B}_{1}=I_{t}$, further from \eqref{DQGSVDeq3} we have
	\begin{equation*} 
		(\boldsymbol{B}_{1}\boldsymbol{W})^{*}\boldsymbol{B}_{1}\boldsymbol{W}=
		I_{t}-\hat{\sf \Sigma}_{\boldsymbol{A}}^T\hat{\sf \Sigma}_{\boldsymbol{A}}={\rm diag}(\underbrace{0, 0, \ldots, 0}_{r}, \underbrace{1-{\sf c}_{1}^{2}, 1-{\sf c}_{2}^{2},\ldots, 1-{\sf c}_{q}^2}_{q},\underbrace{1,1,\ldots,1}_{t-(r+q)}).
	\end{equation*} 
	This implies that the columns of the matrix $\boldsymbol{B}_{1}\boldsymbol{W}$ are weakly orthogonal. According to \cref{PreCS} we can find an $p\times p$ unitary dual quaternion matrix $\hat{\boldsymbol{V}}$ such that 
	\begin{equation}\label{Bgsv}
		\hat{\boldsymbol{V}}^{*}\boldsymbol{B}_{1}\boldsymbol{W}=\begin{pmatrix}
			{\bf T}\epsilon&0\\
			0&{\sf \Theta}
		\end{pmatrix},
	\end{equation}
	where ${\sf \Theta}={\rm diag}({\sf s}_{1},\ldots,{\sf s}_q,1,\ldots,1)$ with ${\sf s}_{i}$ $(>0)$ being the appreciable dual number for $i=1,2,\ldots,q$, and $\bf T$ is an $(p-t+r)\times r$ quaternion matrix.
	Performing the QSVD of $\bf T$, there exist unitary quaternion matrices $\hat{{\bf P}}\in {\bf {\mathbb{Q}}}^{(p-t+r)\times (p-t+r)}$, $\hat{{\bf Q}}\in {\bf {\mathbb{Q}}}^{r\times r}$ such that
	\begin{equation}\label{Tsvd}
		\hat{{\bf P}}^*{\bf T}\hat{{\bf Q}}=\begin{pmatrix}
			\Sigma&0\\
			0&0
		\end{pmatrix},
	\end{equation}
	where $\Sigma={\rm diag}(\sigma_1,\sigma_2,\ldots,\sigma_{r_1})$ is a real matrix with $\sigma_1\ge \sigma_2 \ge \cdots \ge \sigma_{r_1}>0$, and $r_1={\rm rank}({\bf T})$. Substituting \eqref{Tsvd} into \eqref{Bgsv} we have
	\begin{equation}\label{PB1W}
		\begin{split}
			\begin{pmatrix}
				\hat{{\bf P}}^*&0\\
				0&I_{t-r}
			\end{pmatrix}\hat{\boldsymbol{V}}^{*}\boldsymbol{B}_{1}\boldsymbol{W}\begin{pmatrix}
				\hat{{\bf Q}}&0\\
				0&I_{t-r}
			\end{pmatrix}&=
			\begin{blockarray}{ccccc}
				r_1& r-r_1 & q & t-(r+q)\\
				\begin{block}{(cccc)c}
					{\Sigma}\epsilon&0&0&0&r_2\\
					0&0&0&0&p-t+r-r_2\\
					0&0&{\sf S}_{\boldsymbol{B}}&0&q\\
					0&0&0&I&t-(r+q)\\
				\end{block}
			\end{blockarray}  \\
			&\equiv {\sf \Sigma}_{\boldsymbol{B}},
		\end{split}
	\end{equation}
	where ${\sf S}_{\boldsymbol{B}} ={\rm diag}({\sf s}_{1}, {\sf s}_{2}, \ldots, {\sf s}_{q})$ and its diagonal elements are the appreciable dual numbers satisfying $0<{\sf s}_{1}\le \cdots\le {\sf s}_{q}<1$. 
	
	Let 
	\begin{equation}\label{GsvXU}
		\boldsymbol{X}=\boldsymbol{Q}\begin{pmatrix}
			{\sf \Sigma}_{t}^{-1}\boldsymbol{W}\begin{pmatrix}
				\hat{{\bf Q}}&0\\
				0&I_{t-r}
			\end{pmatrix}& 0 &0\\
			0  & 0 &0\\
			0  &0& I_{n-t-s}\\
		\end{pmatrix},   \ 
		\boldsymbol{U}=\hat{\boldsymbol{U}}
		\begin{pmatrix}
			\hat{{\bf Q}}&0\\
			0&I_{m-r}
		\end{pmatrix},\ 
		\boldsymbol{V}=\hat{\boldsymbol{V}}\begin{pmatrix}
			\hat{{\bf P}}&0\\
			0&I_{t-r}
		\end{pmatrix}. 
	\end{equation}	
	Combining \eqref{PB1W} and \eqref{GsvXU} we derive the explicit decomposition of $\boldsymbol{B}$ in \eqref{eq:AB3}. That is,
	\begin{equation*}
		\begin{aligned}
			\boldsymbol{V}^{*}\boldsymbol{B}\boldsymbol{X}&=
			\begin{pmatrix}
				\hat{{\bf P}}^*&0\\
				0&I_{t-r}
			\end{pmatrix}\hat{\boldsymbol{V}}^{*}\boldsymbol{B}\boldsymbol{Q}\begin{pmatrix}
				{\sf \Sigma}_{t}^{-1}\boldsymbol{W}\begin{pmatrix}
					\hat{{\bf Q}}&0\\
					0&I_{t-r}
				\end{pmatrix}& 0 &0\\
				0  & 0 &0\\
				0  &0& I_{n-t-s}\\
			\end{pmatrix}	  \\
			&=\begin{pmatrix}
				\hat{{\bf P}}^*&0\\
				0&I_{t-r}
			\end{pmatrix}\hat{\boldsymbol{V}}^{*}\boldsymbol{B}(\boldsymbol{Q}_1,\boldsymbol{Q}_2,\boldsymbol{Q}_3)
			\left(\begin{array}{c|cc}
				{\sf \Sigma}_{t}^{-1}\boldsymbol{W}\begin{pmatrix}
					\hat{{\bf Q}}&0\\
					0&I_{t-r}
				\end{pmatrix}& 0 &0\\
				0  & 0 &0\\
				0  &0& I_{n-t-s}\\
			\end{array}\right)	  \\
			&=({\sf \Sigma}_{\boldsymbol{B}},0).
		\end{aligned}
	\end{equation*}
	
	It is turn to derive the explicit decomposition of $\boldsymbol{A}$ in \eqref{eq:AB3}. Combining \eqref{AGsv} and \eqref{GsvXU} we have
	\begin{equation*}
		\begin{split}
			\boldsymbol{U}^{*}\boldsymbol{A}\boldsymbol{X}&=
			\begin{pmatrix}
				\hat{{\bf Q}}^{*}&0\\
				0&I_{m-r}
			\end{pmatrix}\hat{\boldsymbol{U}}^{*}\boldsymbol{A}(\boldsymbol{Q}_{1},\ \boldsymbol{Q}_{2},\ \boldsymbol{Q}_{3})
			\begin{pmatrix}
				{\sf \Sigma}_{t}^{-1}\boldsymbol{W}& 0 &0\\
				0  & 0 &0\\
				0  &0& I_{n-t-s}\\
			\end{pmatrix} 
			\begin{pmatrix}
				\hat{{\bf Q}}&0\\
				0&I_{n-r}
			\end{pmatrix}  \\
			& =\begin{pmatrix}
				\hat{{\bf Q}}^{*}&0\\
				0&I_{m-r}
			\end{pmatrix}(\hat{{\sf \Sigma}}_{\boldsymbol{A}}, \ 0)\begin{pmatrix}
				\hat{{\bf Q}}&0\\
				0&I_{n-r}
			\end{pmatrix} 
			= ({\sf \Sigma}_{\boldsymbol{A}}, 0),
		\end{split}
	\end{equation*}
	where the last equality holds because of the explicit derivation of ${\sf \Sigma}_{\boldsymbol{A}}$ as follows
	\begin{subequations}\label{pai}
		\begin{align}
			{\sf \Sigma}_{\boldsymbol{A}}  
			&=\begin{pmatrix}
				\hat{{\bf Q}}^{*}&0\\
				0&I_{m-r}
			\end{pmatrix}
			\begin{blockarray}{cccc}
				r& q & l  & t-(r+q+l)   \\
				\begin{block}{(c|ccc)}
					I&0  &0 &0    \\    
					0&\ {\sf S}_{\boldsymbol{A}}  &0 &0  \\
					0&0&\Xi\epsilon&0   \\
					0&0&0&0     \\
				\end{block}
			\end{blockarray} 
			\begin{blockarray}{cccc}
				r & q & t-(r+q) \\
				\begin{block}{(ccc)c}
					\hat{\bf Q}&0&0& r\\
					0&I&0 & q \\
					0&0&I & t-(r+q) \\
				\end{block}
			\end{blockarray} \\
			&= \begin{blockarray}{ccccc}
				r& q & l  & t-(r+q+l)&   \\
				\begin{block}{(cccc)c}
					I &0  &0 &0  & r  \\
					0&\ {\sf S}_{\boldsymbol{A}}  &0 &0  & q\\
					0&0&\Xi\epsilon&0  & l \\
					0&0&0&0  &  m-r-q-l   \\
				\end{block}
			\end{blockarray}.
		\end{align}
	\end{subequations}
	Then the conclusion holds.  
\end{proof}

\begin{remark}\label{remark4.4}
	In \cref{thm:4.3} we have applied the DQCS decomposition (\cref{lm:3.3}) of dual quaternion matrices  to derive the DQGSVD1. The factor matrix $\boldsymbol{X}$  in the decomposition \eqref{eq:AB1} is not ensure to be nonsingular. However, the factor matrix $\hat{\boldsymbol{X}}$ in the form \eqref{eq:AB2} is a nonsingular dual quaternion matrix, and there are additional infinitesimal submatrices in the resulting decomposition.  This distinguishes from the GSVD of complex matrix pair \cite{s20}.
	The DQGSVD2 in \cref{DDQGSVD} is derived based on the DQSVD, compared with DQGSVD1, DQGSVD2 directly eliminates the influence of the matrix $\Sigma_s\epsilon$. \cref{thm:4.3} and \cref{DDQGSVD} have provided us several forms of the DQGSVD of dual quaternion matrix pair for their convenient use in various scenarios. This will be showcased by three artificial examples in \cref{examp5}.  
\end{remark}

It is well known that the SVD of a product of two real matrices arises from a problem in control theory involving the computation of system balancing transformations \cite{Heath}. The product-type SVD of a real matrix pair provides a convenient way for computing the SVD of the product of two real matrices (PSVD)  if they are consistent for multiplication.
Our concern here is to provide the product-type SVD of consistent dual quaternion matrices (DQPSVD). At first we establish a foundational decomposition of a 3-by-1 blocked dual quaternion matrix that can provide a strong support for the understanding of DQPSVD.

\begin{lemma}{\label{PrePSVD}}
	Suppose the dual quaternion matrix $\boldsymbol{B} \in {\bf \mathbb{DQ}}^{n \times p}$ having the following blocked form 
	\begin{align*}
		\boldsymbol{B}=
		\begin{blockarray}{cc}
			p &\\ 
			\begin{block}{(c)c}
				\boldsymbol{B}_{1}&r_{1}  \\ 
				\boldsymbol{B}_{2}&r_{2}  \\
				\boldsymbol{B}_{3}&n-r_{1}-r_{2} \\
			\end{block}
		\end{blockarray}.
	\end{align*}
	Then the  dual quaternion matrix $\boldsymbol{B}$ has the following decomposition
	\begin{equation}\label{TPSVD-T}
		\boldsymbol{B}=\boldsymbol{T}{\boldsymbol \Sigma}_{\boldsymbol{B}}\boldsymbol{Y},
	\end{equation}
	and 
	\begin{equation}\label{PPSVD-T}
		\begin{aligned}
			\boldsymbol{T}=
			\begin{blockarray}{cccccc}
				r_{11}&r_{12}&r_{1}-(r_{11}+r_{12})&r_{2}&n-(r_{1}+r_{2}) \\
				\begin{block}{(ccccc)c}
					\boldsymbol{T}_{11}&\boldsymbol{T}_{12}&\boldsymbol{T}_{13}&0&0&r_{1} \\
					\boldsymbol{T}_{21}&0&0&\boldsymbol{U}_{1}&0&r_{2}\\
					\boldsymbol{T}_{31}&0&0&0&\boldsymbol{U}_{2}&	n-({r_{1}+r_{2}})\\
				\end{block}
			\end{blockarray}
		\end{aligned},
	\end{equation}
	\begin{equation}\label{PPSVD-B}
		\begin{aligned}
			{\boldsymbol \Sigma}_{\boldsymbol{B}}=
			\begin{blockarray}{ccc}
				r_{11}&p-r_{11}\\
				\begin{block}{(cc)c}
					{\sf \Sigma}_{\boldsymbol{B}}^{1}&0&r_{11}\\
					0&\begin{pmatrix}
						\hat{\Sigma}_{\boldsymbol{B}}\epsilon&0\\
						0&0
					\end{pmatrix}\boldsymbol{G}^{-1}&r_{1}-r_{11}\\
					0&\begin{pmatrix}
						{\boldsymbol \Sigma}_{\boldsymbol{B}}^{2}\\
						{\boldsymbol \Sigma}_{\boldsymbol{B}}^{3}
					\end{pmatrix}&n-r_{1}\\
				\end{block}
			\end{blockarray}
		\end{aligned},
	\end{equation}
	where the submatrix 
	$(\boldsymbol{T}_{11},\boldsymbol{T}_{12},\boldsymbol{T}_{13})\in {\bf \mathbb{DQ}}^{r_{1} \times r_{1}},$
	$\boldsymbol{U}_{1}\in {\bf \mathbb{DQ}}^{r_{2} \times r_{2}},$
	$\boldsymbol{U}_{2}\in {\bf \mathbb{DQ}}^{(n-r_{1}-r_{2}) \times (n-r_{1}-r_{2})}$ are unitary matrices, 
	$\boldsymbol{G} \in {\bf \mathbb{DQ}}^{(p-r_{11})\times (p-r_{11})}$, 
	$\boldsymbol{Y}\in {\bf \mathbb{DQ}}^{p \times p}$ are nonsingular matrices given by \eqref{LEMV} and
	\begin{equation}\label{SigB2B3}
		{\boldsymbol \Sigma}_{\boldsymbol{B}}^{2}=\begin{pmatrix}
			{\sf \Sigma}_1, &\boldsymbol{N}_{1}\epsilon, &0
		\end{pmatrix},\ 
		{\boldsymbol \Sigma}_{\boldsymbol{B}}^{3}=\begin{pmatrix}
			{\sf \Sigma}_2, &\boldsymbol{N}_{2}\epsilon, &0
		\end{pmatrix}.
	\end{equation}
	The blocked dual quaternion matrix 
	$\begin{pmatrix}
		\boldsymbol{N}_{1}\\
		\boldsymbol{N}_{2}
	\end{pmatrix}$ has orthonormal columns, and
	\begin{equation*}
		\begin{aligned}
			&{\sf \Sigma}_{1}=
			\begin{blockarray}{ccccc}
				r & q & t & k-r-q-t  \\
				\begin{block}{(cccc)c}
					I_{1}&0  &0&0&r\\
					0&\ {\sf S}_{1}  &0&0 &q\\
					0&0&\Xi\epsilon&0&t\\
					0 &0 &0&0&r_2-r-q-t\\
				\end{block}
			\end{blockarray},\\ 
			&{\sf \Sigma}_{2}=
			\begin{blockarray}{cccccc}
				\hat{r}_1 &r-\hat{r}_1 & q & t & k-r-q-t  \\
				\begin{block}{(ccccc)c}
					{\Sigma}\epsilon&0  &0&0&0&\hat{r}_2\\
					0&0&0&0&0&n-(r_{1}+r_{2})+r-k-\hat{r}_2\\
					0&0&\ {\sf S}_{2}   &0&0&q \\
					0&0&0&I&0&t\\
					0&0&0&0&I &k-r-q-t\\
				\end{block}
			\end{blockarray}.
		\end{aligned}
	\end{equation*}
	$\Sigma$, $\Xi$ and $\hat{\Sigma}_{\boldsymbol{B}}$ are real diagonal matrices, ${\sf S}_{1}$, ${\sf S}_{2}$ and ${\sf \Sigma}_{\boldsymbol{B}}^{1}$ are appreciable dual matrices and
	\begin{equation*}
		\begin{split}
			\Sigma&={\rm diag}{(\sigma_{1},\sigma_{2},\ldots,\sigma_{\hat{r}_{1}})},    \  \sigma_{1}\ge\sigma_{2}\ge\cdots\ge\sigma_{\hat{r}_{1}}>0,\\
			{\sf \Sigma}_{\boldsymbol{B}}^{1}&={\rm diag}({\sf d}_{1},{\sf d}_{2},\ldots,{\sf d}_{r_{11}}),\ {\sf d}_{1}\ge {\sf d}_{2}\ge\cdots\ge {\sf d}_{r_{11}}>0,\\
			\hat{\Sigma}_{\boldsymbol{B}}&={\rm diag}{(\mu_{r_{11}+1},\mu_{r_{11}+2},\ldots,\mu_{r_{12}})},\ \mu_{r_{11}+1}\ge\mu_{r_{11}+2}\ge\cdots\ge\mu_{r_{12}}>0,\\
			\Xi&={\rm diag}(\xi_1,\xi_2,\ldots\,\xi_t),\ \xi_1\ge \xi_2\ge\cdots\ge \xi_t>0,\\
			{\sf S}_{1} &={\rm diag}({\sf c}_{1}, {\sf c}_{2}, \ldots, {\sf c}_{q}),\ 0<{\sf c}_{1}\le \cdots\le {\sf c}_{q}<1,  \\
			{\sf S}_{2} &={\rm diag}({\sf s}_{1}, {\sf s}_{2}, \ldots, {\sf s}_{q}),\ 1>{\sf s}_{1}\ge\cdots\ge {\sf s}_{q}>0,\\
			&\qquad {\sf c}_{i}^{2}+{\sf s}_{i}^2=1,\ i=1,2,\ldots,q.
		\end{split}
	\end{equation*}
\end{lemma}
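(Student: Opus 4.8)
The plan is to peel off the row blocks of $\boldsymbol{B}$ one at a time: first diagonalize $\boldsymbol{B}_1$ by the DQSVD of \cref{lm:4.1}, then clear the corresponding columns of $\boldsymbol{B}_2$ and $\boldsymbol{B}_3$ by a block Gaussian elimination, and finally apply the DQGSVD1 of \cref{thm:4.3}, in its nonsingular-factor form \eqref{eq:AB2}, to the two remaining row blocks.

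First I would invoke \cref{lm:4.1} on $\boldsymbol{B}_1\in{\bf \mathbb{DQ}}^{r_1\times p}$ to obtain unitary $\boldsymbol{P}^{(1)}\in{\bf \mathbb{DQ}}^{r_1\times r_1}$ and $\boldsymbol{Q}^{(1)}\in{\bf \mathbb{DQ}}^{p\times p}$ with $(\boldsymbol{P}^{(1)})^{*}\boldsymbol{B}_1\boldsymbol{Q}^{(1)}={\rm diag}({\sf\Sigma}_{\boldsymbol{B}}^{1},\hat{\Sigma}_{\boldsymbol{B}}\epsilon,0)$, the leading block ${\sf\Sigma}_{\boldsymbol{B}}^{1}$ collecting the $r_{11}={\rm Arank}(\boldsymbol{B}_1)$ appreciable singular values and $\hat{\Sigma}_{\boldsymbol{B}}$ the infinitesimal ones. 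Right multiplying all of $\boldsymbol{B}$ by $\boldsymbol{Q}^{(1)}$ and left multiplying only its first block row by $(\boldsymbol{P}^{(1)})^{*}$ replaces $\boldsymbol{B}_1$ by ${\rm diag}({\sf\Sigma}_{\boldsymbol{B}}^{1},\hat{\Sigma}_{\boldsymbol{B}}\epsilon,0)$ and leaves $\boldsymbol{B}_2\boldsymbol{Q}^{(1)}$ and $\boldsymbol{B}_3\boldsymbol{Q}^{(1)}$ unchanged. Since ${\sf\Sigma}_{\boldsymbol{B}}^{1}$ is diagonal with positive appreciable entries it is invertible, so I can subtract suitable dual-quaternion multiples $\boldsymbol{T}_{21}$ and $\boldsymbol{T}_{31}$ of the first $r_{11}$ rows from $\boldsymbol{B}_2\boldsymbol{Q}^{(1)}$ and $\boldsymbol{B}_3\boldsymbol{Q}^{(1)}$ so as to annihilate their first $r_{11}$ columns; because those rows are supported only on the first $r_{11}$ columns, the remaining columns of $\boldsymbol{B}_2$ and $\boldsymbol{B}_3$ are not disturbed. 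What remains is an active block $\bigl(\begin{smallmatrix}\tilde{\boldsymbol{B}}_2\\ \tilde{\boldsymbol{B}}_3\end{smallmatrix}\bigr)\in{\bf \mathbb{DQ}}^{(n-r_1)\times(p-r_{11})}$, together with the infinitesimal block $\bigl(\begin{smallmatrix}\hat{\Sigma}_{\boldsymbol{B}}\epsilon & 0\\ 0 & 0\end{smallmatrix}\bigr)$ still occupying rows $r_{11}+1,\dots,r_1$ and columns $r_{11}+1,\dots,p$.

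Next I would apply \cref{thm:4.3} in the form \eqref{eq:AB2} to the pair $\{\tilde{\boldsymbol{B}}_2,\tilde{\boldsymbol{B}}_3\}$, which share $p-r_{11}$ columns: this produces unitary $\boldsymbol{U}_1\in{\bf \mathbb{DQ}}^{r_2\times r_2}$ and $\boldsymbol{U}_2\in{\bf \mathbb{DQ}}^{(n-r_1-r_2)\times(n-r_1-r_2)}$ and a nonsingular $\hat{\boldsymbol{X}}\in{\bf \mathbb{DQ}}^{(p-r_{11})\times(p-r_{11})}$ with $\tilde{\boldsymbol{B}}_2=\boldsymbol{U}_1({\sf\Sigma}_1,\boldsymbol{N}_1\epsilon,0)\hat{\boldsymbol{X}}$ and $\tilde{\boldsymbol{B}}_3=\boldsymbol{U}_2({\sf\Sigma}_2,\boldsymbol{N}_2\epsilon,0)\hat{\boldsymbol{X}}$, where ${\sf\Sigma}_1,{\sf\Sigma}_2$ have exactly the structure asserted in the lemma (this is \eqref{SigABhat} with the accents dropped) and $\bigl(\begin{smallmatrix}\boldsymbol{N}_1\\ \boldsymbol{N}_2\end{smallmatrix}\bigr)$ has orthonormal columns. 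Left multiplying the $\tilde{\boldsymbol{B}}$-rows by ${\rm diag}(\boldsymbol{U}_1^{*},\boldsymbol{U}_2^{*})$ and right multiplying columns $r_{11}+1,\dots,p$ by $\hat{\boldsymbol{X}}^{-1}$ sends those rows to $\bigl(\begin{smallmatrix}{\sf\Sigma}_{\boldsymbol{B}}^{2}\\ {\sf\Sigma}_{\boldsymbol{B}}^{3}\end{smallmatrix}\bigr)$ as in \eqref{SigB2B3} and, simultaneously, sends the infinitesimal block to $\bigl(\begin{smallmatrix}\hat{\Sigma}_{\boldsymbol{B}}\epsilon & 0\\ 0 & 0\end{smallmatrix}\bigr)\hat{\boldsymbol{X}}^{-1}$, which is the middle row block of ${\boldsymbol\Sigma}_{\boldsymbol{B}}$ once one sets $\boldsymbol{G}:=\hat{\boldsymbol{X}}$. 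Collecting the transformations, the right factor $\boldsymbol{Y}={\rm diag}(I_{r_{11}},\boldsymbol{G})(\boldsymbol{Q}^{(1)})^{*}$ is nonsingular because $\boldsymbol{Q}^{(1)}$ is unitary and $\boldsymbol{G}$ is nonsingular; the left factor is $\boldsymbol{T}={\rm diag}(\boldsymbol{P}^{(1)},I,I)\,\bigl(\begin{smallmatrix}I&0&0\\ \boldsymbol{T}_{21}&I&0\\ \boldsymbol{T}_{31}&0&I\end{smallmatrix}\bigr)\,{\rm diag}(I_{r_1},\boldsymbol{U}_1,\boldsymbol{U}_2)$, and partitioning $\boldsymbol{P}^{(1)}=(\boldsymbol{T}_{11},\boldsymbol{T}_{12},\boldsymbol{T}_{13})$ recovers the block form \eqref{PPSVD-T}; the reduced first block row together with $\bigl(\begin{smallmatrix}{\sf\Sigma}_{\boldsymbol{B}}^{2}\\ {\sf\Sigma}_{\boldsymbol{B}}^{3}\end{smallmatrix}\bigr)$ is exactly ${\boldsymbol\Sigma}_{\boldsymbol{B}}$ of \eqref{PPSVD-B}, and \eqref{TPSVD-T} follows by multiplying back.

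The routine but delicate part is the index bookkeeping: one has to track the widths of the sub-blocks of $\boldsymbol{P}^{(1)}$, of $\hat{\Sigma}_{\boldsymbol{B}}$, and of the forms ${\sf\Sigma}_1,{\sf\Sigma}_2$ delivered by \cref{thm:4.3}, and verify that they reassemble into the claimed partitions of $\boldsymbol{T}$ and ${\boldsymbol\Sigma}_{\boldsymbol{B}}$; in particular the integers $r,q,t,k,\hat{r}_1,\hat{r}_2$ appearing there must be compatible with the row counts $r_2$ and $n-r_1-r_2$. The one \emph{genuinely structural} step, and the reason the nonsingular matrix $\boldsymbol{G}$ has to be introduced at all, is that the column transform $\hat{\boldsymbol{X}}$ coming from the DQGSVD1 step is forced to act on the already-diagonalized infinitesimal part $\hat{\Sigma}_{\boldsymbol{B}}\epsilon$ of $\boldsymbol{B}_1$ as well; being infinitesimal, that block only feels the appreciable part of $\hat{\boldsymbol{X}}$, yet it still cannot be returned to diagonal form, so it must be recorded as $\bigl(\begin{smallmatrix}\hat{\Sigma}_{\boldsymbol{B}}\epsilon & 0\\ 0 & 0\end{smallmatrix}\bigr)\boldsymbol{G}^{-1}$. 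This is precisely the feature that separates the resulting DQPSVD from the classical PSVD, and everything else reduces to \cref{lm:4.1} and \cref{thm:4.3}.
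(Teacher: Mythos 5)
Your proof is correct and follows essentially the same route as the paper: diagonalize $\boldsymbol{B}_1$ by DQSVD (\cref{lm:4.1}), absorb the elimination of the first $r_{11}$ columns of the remaining row blocks into the left factor (which is exactly the $\boldsymbol{B}_2\boldsymbol{W}_{11}({\sf\Sigma}_{\boldsymbol{B}}^1)^{-1}$ and $\boldsymbol{B}_3\boldsymbol{W}_{11}({\sf\Sigma}_{\boldsymbol{B}}^1)^{-1}$ blocks the paper writes directly), then apply the nonsingular-factor form \eqref{eq:AB2} of the DQGSVD1 (\cref{thm:4.3}) to the trailing pair, identifying $\boldsymbol{G}$ with $\hat{\boldsymbol{X}}$. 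Your observation that the $\boldsymbol{G}^{-1}$ term appears because the already-diagonalized infinitesimal block of $\boldsymbol{B}_1$ is forced through the column transform of the lower blocks is precisely the structural point the paper is making.
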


\begin{proof}
	From DQSVD of the $r_1\times p$ dual quaternion matrix $\boldsymbol{B}_{1}$, there exist unitary matrices $\boldsymbol{T}_{1} \in {\bf \mathbb{DQ}}^{r_{1} \times r_{1}} $ and $\boldsymbol{W} \in {\bf \mathbb{DQ}}^{p \times p}$ such that 
	\begin{equation}\label{B1SVD}
		\boldsymbol{B}_{1}=\boldsymbol{T}_{1}\begin{pmatrix}
			{\sf \Sigma}_{\boldsymbol{B}}^{1}&0&0\\
			0&\hat{\Sigma}_{\boldsymbol{B}}\epsilon&0\\
			0&0&0
		\end{pmatrix}\boldsymbol{W}^{*}=(\boldsymbol{T}_{11},\ \boldsymbol{T}_{12},\ \boldsymbol{T}_{13})\begin{pmatrix}
			{\sf \Sigma}_{\boldsymbol{B}}^{1}&0&0\\
			0&\hat{\Sigma}_{\boldsymbol{B}}\epsilon&0\\
			0&0&0
		\end{pmatrix}
		\begin{blockarray}{cc}
			p& \\
			\begin{block}{(c)c}
				\boldsymbol{W}^{*}_{11} & r_{11}\\
				\boldsymbol{W}^{*}_{12}  &  r_{12}\\
				\boldsymbol{W}^{*}_{13} & p- r_{11} -r_{12}\\
			\end{block}
		\end{blockarray},
	\end{equation}
	where we have used the partitioned forms 
	\begin{equation*}
		\boldsymbol{T}_{1}=\begin{blockarray}{cccc}
			r_{11} &  r_{12} & r_1- r_{11} -r_{12} \\
			\begin{block}{(ccc)c}
				\boldsymbol{T}_{11} & \boldsymbol{T}_{12} & \boldsymbol{T}_{13} & n_1\\
			\end{block}
		\end{blockarray}, \  \
		\boldsymbol{W}=\begin{blockarray}{cccc}
			r_{11} &  r_{12} & p- r_{11} -r_{12} \\
			\begin{block}{(ccc)c}
				\boldsymbol{W}_{11} & \boldsymbol{W}_{12} & \boldsymbol{W}_{13} & p  \\
			\end{block}
		\end{blockarray}.
	\end{equation*} 
	Then the dual quaternion matrix $\boldsymbol{B}$ can be rewritten as 
	\begin{align}\label{BB}
		\boldsymbol{B}=
		\begin{pmatrix}
			\boldsymbol{B}_{1}\\
			\boldsymbol{B}_{2}\\
			\boldsymbol{B}_{3}
		\end{pmatrix}&=\left(\begin{array}{ccc|cc}
			\boldsymbol{T}_{11}&\boldsymbol{T}_{12}&\boldsymbol{T}_{13}&0&0\\
			\hline
			\boldsymbol{B}_{2}\boldsymbol{W}_{11}({\sf \Sigma}_{\boldsymbol{B}}^{1})^{-1}&0&0&I_{m_{2}}&0\\
			\boldsymbol{B}_{3}\boldsymbol{W}_{11}({\sf \Sigma}_{\boldsymbol{B}}^{1})^{-1}&0&0&0&I_{m_{3}}
		\end{array}\right)\\
		&\quad \cdot 
		\begin{pmatrix}
			{\sf \Sigma}_{\boldsymbol{B}}^{1}&0&0\\
			0&\hat{\Sigma}_{\boldsymbol{B}}\epsilon&0\\	
			0&0&0\\  \hline
			0&\boldsymbol{B}_{2}\boldsymbol{W}_{12}&\boldsymbol{B}_{2}\boldsymbol{W}_{13}\\
			0&\boldsymbol{B}_{3}\boldsymbol{W}_{12}&\boldsymbol{B}_{3}\boldsymbol{W}_{13}
		\end{pmatrix}\begin{pmatrix}
			\boldsymbol{W}^{*}_{11}\\
			\boldsymbol{W}^{*}_{12}\\
			\boldsymbol{W}^{*}_{13}
		\end{pmatrix}.
	\end{align}
	
	According to \cref{thm:4.3}, with the DQGSVD of the $(n-r_{1})\times (p-r_{11})$ dual quaternion submatrix 
	$\begin{pmatrix}
		\boldsymbol{B}_{2}\boldsymbol{W}_{12}&\boldsymbol{B}_{2}\boldsymbol{W}_{13}\\
		\boldsymbol{B}_{3}\boldsymbol{W}_{12}&\boldsymbol{B}_{3}\boldsymbol{W}_{13}
	\end{pmatrix}$, there exist two unitary matrices $\boldsymbol{U}_{1}\in {\bf \mathbb{DQ}}^{r_{2} \times r_{2}}$,  $\boldsymbol{U}_{2} \in {\bf \mathbb{DQ}}^{(n-r_{1}-r_{2})\times (n-r_{1}-r_{2})}$ and a nonsingular matrix $\boldsymbol{G} \in {\bf \mathbb{DQ}}^{(p-r_{11})\times (p-r_{11})}$
	such that 
	\begin{equation}\label{gsvB23}
		\begin{pmatrix}
			\boldsymbol{B}_{2}\boldsymbol{W}_{12}&\boldsymbol{B}_{2}\boldsymbol{W}_{13}\\ \hline
			\boldsymbol{B}_{3}\boldsymbol{W}_{12}&\boldsymbol{B}_{3}\boldsymbol{W}_{13}
		\end{pmatrix} =
		\begin{pmatrix}
			\boldsymbol{U}_1 & 0 \\ 0  &  \boldsymbol{U}_2
		\end{pmatrix}
		\begin{pmatrix}
			{\boldsymbol \Sigma}_{\boldsymbol{B}}^{2}\\
			{\boldsymbol \Sigma}_{\boldsymbol{B}}^{3}
		\end{pmatrix}
		\boldsymbol{G},
	\end{equation}
	where ${\boldsymbol \Sigma}_{\boldsymbol{B}}^{2}, {\boldsymbol \Sigma}_{\boldsymbol{B}}^{3}$ are given by \eqref{SigB2B3}.
	Substituting \eqref{gsvB23} into \eqref{BB} we rewrite $\boldsymbol{B}$ to be
	\begin{align*}
		\boldsymbol{B}=\begin{pmatrix}
			\boldsymbol{B}_{1}\\
			\boldsymbol{B}_{2}\\
			\boldsymbol{B}_{3}
		\end{pmatrix}&=\left(\begin{array}{ccc|cc}
			\boldsymbol{T}_{11}&\boldsymbol{T}_{12}&\boldsymbol{T}_{13}&0&0\\  \hline
			\boldsymbol{B}_{2}\boldsymbol{W}_{11}({\sf \Sigma}_{\boldsymbol{B}}^{1})^{-1}&0&0&\boldsymbol{U}_{1}&0\\
			\boldsymbol{B}_{3}\boldsymbol{W}_{11}({\sf \Sigma}_{\boldsymbol{B}}^{1})^{-1}&0&0&0&\boldsymbol{U}_{2}
		\end{array}\right)  \\
		&\cdot\begin{pmatrix}
			{\sf \Sigma}_{\boldsymbol{B}}^{1}&0\\
			0&\begin{pmatrix}
				\hat{\Sigma}_{\boldsymbol{B}}\epsilon&0\\
				0&0
			\end{pmatrix}\boldsymbol{G}^{-1}\\  \hline
			0&\begin{pmatrix}
				{\boldsymbol \Sigma}_{\boldsymbol{B}}^{2}\\
				{\boldsymbol \Sigma}_{\boldsymbol{B}}^{3}
			\end{pmatrix}\\
		\end{pmatrix}
		\begin{pmatrix}
			I_{r_{11}} & 0  \\  0  &  \boldsymbol{G}
		\end{pmatrix}
		\begin{pmatrix}
			\boldsymbol{W}_{11}^{*}\\
			\boldsymbol{W}_{12}^{*}\\
			\boldsymbol{W}_{13}^{*}
		\end{pmatrix}.
	\end{align*}
	Let  $\boldsymbol{T}$ and ${\boldsymbol \Sigma}_{\boldsymbol{B}}$ be as given in  \eqref{PPSVD-T}-\eqref{PPSVD-B}, and
	\begin{equation}\label{LEMV}
		\boldsymbol{Y}=\begin{pmatrix}
			I_{r_{11}} & 0  \\  0  &  \boldsymbol{G}
		\end{pmatrix}
		\begin{pmatrix}
			\boldsymbol{W}_{11}^{*}\\
			\boldsymbol{W}_{12}^{*}\\
			\boldsymbol{W}_{13}^{*}
		\end{pmatrix}
		=\begin{pmatrix}
			\boldsymbol{W}_{11}^{*}\\
			\boldsymbol{G}\begin{pmatrix}
				\boldsymbol{W}_{12}^{*}\\
				\boldsymbol{W}_{13}^{*}
			\end{pmatrix}
		\end{pmatrix}.
	\end{equation}
	Then $\boldsymbol{Y}$ is an $p\times p$ nonsingular dual quaternion matrix, 
	and the decomposition in \eqref{TPSVD-T} is derived.
\end{proof}

Based on \cref{PrePSVD} we present the following product-type DQGSVD of a dual quaternion matrix pair $\{\boldsymbol{A}, \boldsymbol{B}\}$.

\begin{theorem}[DQPSVD]
	Let $\boldsymbol{A}\in {\bf \mathbb{DQ}}^{m\times n}$, $\boldsymbol{B}\in {\bf \mathbb{DQ}}^{n\times p}$. Then there exist a unitary matrix $\boldsymbol{U}\in {\bf \mathbb{DQ}}^{m\times m}$ and two nonsingular matrices $\boldsymbol{X}\in {\bf \mathbb{DQ}}^{n\times n}$, $\boldsymbol{Y}\in {\bf \mathbb{DQ}}^{p\times p}$ such that 
	\begin{equation}\label{pgsv}
		\boldsymbol{A}=\boldsymbol{U}{\sf D}_{\boldsymbol{A}}\boldsymbol{X}^{-1},\ \boldsymbol{B}=\boldsymbol{X}{\boldsymbol D}_{\boldsymbol{B}}\boldsymbol{Y},
	\end{equation}
	with
	\begin{equation*}
		\begin{aligned}
			{\sf D}_{\boldsymbol{A}}&=
			\begin{blockarray}{cccc}
				r_{1} &r_{2}&n-(r_{1}+r_{2})\\ 
				\begin{block}{(ccc)c}
					I&0&0&r_{1}  \\ 
					0&I\epsilon&0&r_{2}  \\
					0&0&0&m-(r_{1}+r_{2}) \\
				\end{block}
			\end{blockarray}, \\
			{\boldsymbol D}_{\boldsymbol{B}}&=
			\begin{blockarray}{ccc}
				r_{11}&p-r_{11}\\
				\begin{block}{(cc)c}
					{\boldsymbol \Sigma}_{\boldsymbol{B}}^{1}&0&r_{11}\\
					0&\begin{pmatrix}
						\hat{\Sigma}_{\boldsymbol{B}}\epsilon&0\\
						0&0
					\end{pmatrix}\boldsymbol{G}^{-1}&r_{1}-r_{11}\\
					0&\begin{pmatrix}
						{\boldsymbol \Sigma}_{\boldsymbol{B}}^{2}\\
						{\boldsymbol \Sigma}_{\boldsymbol{B}}^{3}
					\end{pmatrix}&n-r_1   \\
				\end{block}
			\end{blockarray},
		\end{aligned}
	\end{equation*}
	where $r_{1}={\rm Arank}(\boldsymbol{A})$, $r_{1}+r_{2}={\rm rank}(\boldsymbol{A})$,  and
	$\boldsymbol{U}$, $\boldsymbol{X}$, $\boldsymbol{Y}$ are given by \eqref{QU}, \eqref{X1T} and \eqref{LEMV}, respectively. $\boldsymbol{G} \in {\bf \mathbb{DQ}}^{(p-r_{11})\times (p-r_{11})}$ is a nonsingular matrix given by \eqref{LEMV},
	\begin{equation*}
		{\boldsymbol \Sigma}_{\boldsymbol{B}}^{2}=\begin{pmatrix}
			{\sf \Sigma}_1, &\boldsymbol{N}_{1}\epsilon, &0
		\end{pmatrix},\ 
		{\boldsymbol \Sigma}_{\boldsymbol{B}}^{3}=\begin{pmatrix}
			{\sf \Sigma}_2, &\boldsymbol{N}_{2}\epsilon, &0
		\end{pmatrix}.
	\end{equation*}
	The blocked dual quaternion matrix $\begin{pmatrix}
		\boldsymbol{N}_{1}\\
		\boldsymbol{N}_{2}
	\end{pmatrix}$ has orthonormal columns, and
	\begin{equation*}
		\begin{aligned}
			&{\sf \Sigma}_{1}=
			\begin{blockarray}{ccccc}
				r & q & t & k-r-q-t  \\
				\begin{block}{(cccc)c}
					I_{1}&0  &0&0&r\\
					0&\ {\sf S}_{1}  &0&0 &q\\
					0&0&\Xi\epsilon&0&t\\
					0 &0 &0&0&n_2-r-q-t\\
				\end{block}
			\end{blockarray},\\ 
			&{\sf \Sigma}_{2}=
			\begin{blockarray}{cccccc}
				{\hat r}_1 &r-{\hat r}_1 & q & t & k-r-q-t  \\
				\begin{block}{(ccccc)c}
					{\Sigma}\epsilon&0  &0&0&0&{\hat r}_2\\
					0&0&0&0&0&n-r_1-r_2+r-k-{\hat r}_2\\
					0&0&\ {\sf S}_{2}   &0&0&q \\
					0&0&0&I&0&t\\
					0&0&0&0&I &k-r-q-t\\
				\end{block}
			\end{blockarray}.
		\end{aligned}
	\end{equation*}
	$\Sigma$, $\Xi$ are real diagonal matrices, ${\sf S}_{1}$ and ${\sf S}_{2}$ are appreciable dual matrices and 
	\begin{equation*}
		\begin{split}
			\Sigma&={\rm diag}{(\sigma_{1},\sigma_{2},\ldots,\sigma_{\hat{r}_{1}})},\quad \sigma_{1}\ge\sigma_{2}\ge \cdots  \ge \sigma_{\hat{r}_{1}}>0,\\
			\Xi&={\rm diag}(\xi_1,\xi_2,\ldots,\xi_t),\  \xi_1\ge \xi_2\ge\cdots\ge \xi_t>0,\\
			{\sf S}_{1} &={\rm diag}({\sf c}_{1}, {\sf c}_{2}, \ldots, {\sf c}_{q}),\ 0<{\sf c}_{1}\le \cdots\le {\sf c}_{q}<1,  \\
			{\sf S}_{2} &={\rm diag}({\sf s}_{1}, {\sf s}_{2}, \ldots, {\sf s}_{q}),\ 1>{\sf s}_{1}\ge\cdots\ge {\sf s}_{q}>0,\\
			&\qquad {\sf c}_{i}^{2}+{\sf s}_{i}^2=1,\ i=1,2,\ldots,q.
		\end{split}
	\end{equation*}	
\end{theorem}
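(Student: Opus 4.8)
The plan is to reduce $\boldsymbol{A}$ to the canonical form ${\sf D}_{\boldsymbol{A}}$ by a DQSVD, absorbing its left factor and its singular values into a unitary matrix and a nonsingular matrix, and then to delegate all of the remaining structural work to the $\boldsymbol{B}$-side, where \cref{PrePSVD} is purpose-built to deliver exactly the block form ${\boldsymbol D}_{\boldsymbol{B}}$.

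First I would apply \cref{lm:4.1} to $\boldsymbol{A}$, writing $\boldsymbol{A}=\boldsymbol{U}_{0}\,{\rm diag}({\sf \Sigma}_{t},0)\,\boldsymbol{V}_{0}^{*}$, where among the diagonal entries of ${\sf \Sigma}_{t}$ the first $r_{1}={\rm Arank}(\boldsymbol{A})$ are appreciable and the remaining $r_{2}=t-r_{1}$ are infinitesimal, $t={\rm rank}(\boldsymbol{A})$. Each appreciable entry is an invertible dual number and each infinitesimal entry equals a positive real multiple of $\epsilon$, so pulling the singular values to the right gives $\boldsymbol{A}=\boldsymbol{U}_{0}\,{\sf D}_{\boldsymbol{A}}\,\boldsymbol{X}_{0}^{-1}$ with ${\sf D}_{\boldsymbol{A}}={\rm diag}(I_{r_{1}},I_{r_{2}}\epsilon,0)$ exactly as required and $\boldsymbol{X}_{0}=\boldsymbol{V}_{0}\,{\rm diag}({\sf \Sigma}_{r_{1}}^{-1},D_{r_{2}}^{-1},I_{n-t})$ nonsingular (the appreciable diagonal ${\sf \Sigma}_{r_{1}}$ being invertible and $D_{r_{2}}$ a positive real diagonal). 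Degenerate cases in which some of $r_{1},r_{2},n-t$ vanish are covered by the same formulas.

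Next I would partition $\boldsymbol{X}_{0}^{-1}\boldsymbol{B}\in{\bf \mathbb{DQ}}^{n\times p}$ into row blocks of sizes $r_{1},r_{2},n-r_{1}-r_{2}$ and invoke \cref{PrePSVD}, obtaining $\boldsymbol{X}_{0}^{-1}\boldsymbol{B}=\boldsymbol{T}\,{\boldsymbol \Sigma}_{\boldsymbol{B}}\,\boldsymbol{Y}$ with $\boldsymbol{T}$ of the structured shape of that lemma, ${\boldsymbol \Sigma}_{\boldsymbol{B}}$ already equal to the target ${\boldsymbol D}_{\boldsymbol{B}}$, $\boldsymbol{Y}$ nonsingular and given by the lemma's formula, and the orthonormality of $\begin{pmatrix}\boldsymbol{N}_{1}\\\boldsymbol{N}_{2}\end{pmatrix}$ inherited verbatim. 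This already yields $\boldsymbol{B}=(\boldsymbol{X}_{0}\boldsymbol{T})\,{\boldsymbol D}_{\boldsymbol{B}}\,\boldsymbol{Y}$; since $\boldsymbol{T}$ is block lower triangular with unitary diagonal blocks it is nonsingular, so $\boldsymbol{X}:=\boldsymbol{X}_{0}\boldsymbol{T}$ is a legitimate nonsingular factor.

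The main obstacle is that passing from $\boldsymbol{X}_{0}$ to $\boldsymbol{X}_{0}\boldsymbol{T}$ spoils the decomposition of $\boldsymbol{A}$, which now reads $\boldsymbol{A}=\boldsymbol{U}_{0}\,({\sf D}_{\boldsymbol{A}}\boldsymbol{T})\,\boldsymbol{X}^{-1}$ with ${\sf D}_{\boldsymbol{A}}\boldsymbol{T}$ no longer canonical. The crux is to show ${\sf D}_{\boldsymbol{A}}\boldsymbol{T}=\boldsymbol{P}\,{\sf D}_{\boldsymbol{A}}$ for a suitable unitary $\boldsymbol{P}\in{\bf \mathbb{DQ}}^{m\times m}$; then $\boldsymbol{A}=(\boldsymbol{U}_{0}\boldsymbol{P})\,{\sf D}_{\boldsymbol{A}}\,\boldsymbol{X}^{-1}$ and $\boldsymbol{U}:=\boldsymbol{U}_{0}\boldsymbol{P}$ completes the construction. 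To build $\boldsymbol{P}$ I would use the explicit shape of ${\sf D}_{\boldsymbol{A}}\boldsymbol{T}$: its first $r_{1}$ rows are $(\boldsymbol{T}_{A},0,0)$ where $\boldsymbol{T}_{A}=(\boldsymbol{T}_{11},\boldsymbol{T}_{12},\boldsymbol{T}_{13})$ is the unitary left factor of the DQSVD of the top $r_{1}\times p$ block of $\boldsymbol{X}_{0}^{-1}\boldsymbol{B}$, its next $r_{2}$ rows are $\epsilon$ times a row $(\ast,\boldsymbol{U}_{1},0)$ with $\boldsymbol{U}_{1}$ one of the unitary diagonal blocks of $\boldsymbol{T}$, and the remaining rows vanish. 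Matching this against $\boldsymbol{P}\,{\sf D}_{\boldsymbol{A}}$ forces the first column-block of $\boldsymbol{P}$ and the standard parts of its second column-block while leaving their infinitesimal parts and the whole last column-block free; using $\epsilon^{2}=0$ one verifies the prescribed columns are already orthonormal, solves a linear equation over the quaternions (invertibility of the standard part of $\boldsymbol{T}_{A}$) for the free infinitesimal parts so that all cross inner products die, and finally completes $\boldsymbol{P}$ to a unitary matrix by extending to an orthonormal basis of ${\bf \mathbb{DQ}}^{m}$, as in the expansion results already used in \cref{lm:3.1} and \cref{PreCS}. It then remains only to collect $\boldsymbol{U}=\boldsymbol{U}_{0}\boldsymbol{P}$, $\boldsymbol{X}=\boldsymbol{X}_{0}\boldsymbol{T}$, $\boldsymbol{Y}$ from \cref{PrePSVD}, to record $r_{1}={\rm Arank}(\boldsymbol{A})$ and $r_{1}+r_{2}={\rm rank}(\boldsymbol{A})$, and to note that the ordering conventions and the sizes of all sub-blocks (including those of ${\sf \Sigma}_{1},{\sf \Sigma}_{2},\boldsymbol{N}_{1},\boldsymbol{N}_{2}$) carry over from \cref{lm:4.1}, from \cref{thm:4.3} used inside \cref{PrePSVD}, and from the QSVD of the auxiliary quaternion matrices there.
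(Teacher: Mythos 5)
Your proof proposal is correct and reaches the same decomposition, but the mechanism you use to repair the $\boldsymbol{A}$-side after applying \cref{PrePSVD} is genuinely different from the paper's. The paper pre-inserts a block elementary matrix $\boldsymbol{M}$ (with an as-yet-undetermined $(2,1)$ block) into the factorization $\boldsymbol{X}_{1}^{-1}=\boldsymbol{M}^{-1}{\rm diag}({\sf\Sigma}_{\boldsymbol{A}},\hat\Sigma_{\boldsymbol{A}},I)\widetilde{\boldsymbol{W}}^{*}$ \emph{before} touching $\boldsymbol{B}$, precisely so that the combination ${\sf D}_{\boldsymbol{A}}\boldsymbol{M}\boldsymbol{T}$ collapses to ${\rm diag}(\hat{\boldsymbol{T}},\boldsymbol{U}_{1}\epsilon,0)$, which factors cleanly as a \emph{block-diagonal} unitary times ${\sf D}_{\boldsymbol{A}}$ and gives the explicit $\boldsymbol{U}$ of \eqref{QU}. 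The price is a circular dependence: $\boldsymbol{M}$ is defined through the block $\boldsymbol{B}_{2}$ of $\boldsymbol{X}_{1}^{-1}\boldsymbol{B}=\boldsymbol{M}^{-1}(\cdots)\boldsymbol{B}$, and the paper has to close the loop at the end of the proof by solving \eqref{eq:B2proj} to certify existence. You instead take the obvious $\boldsymbol{X}_{0}^{-1}={\rm diag}({\sf\Sigma}_{\boldsymbol{A}},\hat\Sigma_{\boldsymbol{A}},I)\widetilde{\boldsymbol{W}}^{*}$ (no $\boldsymbol{M}$, no circularity), apply \cref{PrePSVD} to $\boldsymbol{X}_{0}^{-1}\boldsymbol{B}$ directly, and then absorb the resulting ${\sf D}_{\boldsymbol{A}}\boldsymbol{T}=\begin{pmatrix}\hat{\boldsymbol{T}}&0&0\\(\boldsymbol{T}_{21},0,0)\epsilon&\boldsymbol{U}_{1}\epsilon&0\\0&0&0\end{pmatrix}$ into a left unitary $\boldsymbol{P}$; since the extraneous block $(\boldsymbol{T}_{21},0,0)\epsilon$ is infinitesimal, the first $r_{1}$ columns of ${\sf D}_{\boldsymbol{A}}\boldsymbol{T}$ are already orthonormal, the standard part of the next $r_{2}$ columns is forced to ${\rm diag}(0,\boldsymbol{U}_{1,st},0)$, and the cross-orthogonality reduces to a solvable quaternion linear system because $\hat{\boldsymbol{T}}_{st}$ is unitary, with the extension to a full unitary basis handled as in \cref{PreCS}. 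Your route is constructive throughout and trades the paper's existence argument for a small orthogonalization argument; the paper's route trades the orthogonalization for the circularity but buys the tidy block-diagonal $\widetilde{\boldsymbol{Q}}$ in \eqref{QU}. One small caveat: because the theorem statement cites the explicit formulas \eqref{QU} and \eqref{X1T}, your $\boldsymbol{U}=\boldsymbol{U}_{0}\boldsymbol{P}$ and $\boldsymbol{X}=\boldsymbol{X}_{0}\boldsymbol{T}$ will not literally coincide with those formulas, though they produce a valid decomposition of the same shape.
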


\begin{proof}
	With the DQSVD of the dual quaternion matrix $\boldsymbol{A}$, there exist unitary matrices $\widetilde{\boldsymbol{U}} \in {\bf \mathbb{DQ}}^{m\times m}$, $\widetilde{\boldsymbol{W}}=(\widetilde{\boldsymbol{W}}_{1},\widetilde{\boldsymbol{W}}_{2},\widetilde{\boldsymbol{W}}_{3})\in {\bf \mathbb{DQ}}^{n\times n}$ such that
	\begin{equation}\label{DSVDA}
		\boldsymbol{A}=\widetilde{\boldsymbol{U}}\begin{pmatrix}
			{\sf \Sigma}_{\boldsymbol{A}}&0&0\\
			0&\hat{\Sigma}_{\boldsymbol{A}}\epsilon&0\\
			0&0&0
		\end{pmatrix}
		\begin{pmatrix}
			\widetilde{\boldsymbol{W}}_{1}^{*} \\
			\widetilde{\boldsymbol{W}}_{2}^{*}\\
			\widetilde{\boldsymbol{W}}_{3}^{*}
		\end{pmatrix}
		=\widetilde{\boldsymbol{U}}
		\begin{pmatrix}
			I_{r_1}&0&0\\
			0&I_{r_2}\epsilon&0\\
			0&0&0
		\end{pmatrix}
		\begin{blockarray}{cc}
			n&  \\
			\begin{block}{(c)c}
				{\sf \Sigma}_{\boldsymbol{A}}\widetilde{\boldsymbol{W}}_{1}^{*} & r_1 \\
				\hat{\Sigma}_{\boldsymbol{A}}\widetilde{\boldsymbol{W}}_{2}^{*}  & r_2 \\
				\widetilde{\boldsymbol{W}}_{3}^{*}  & n-r_1-r_2  \\
			\end{block}
		\end{blockarray}.
	\end{equation}
	
	Let 
	\begin{equation*}
		\boldsymbol{M}=\begin{pmatrix}
			I_{r_{1}}&0&0\\
			\begin{pmatrix}
				-\boldsymbol{B}_{2}\boldsymbol{W}_{11}({\sf \Sigma}_{\boldsymbol{B}}^{1})^{-1}&0&0
			\end{pmatrix}\hat{\boldsymbol{T}}^{*}&I_{r_{2}}&0\\
			0&0&I_{n-r_{1}-r_{2}}
		\end{pmatrix},  \  \
		{\sf D}_{\boldsymbol{A}}=
		\begin{pmatrix}
			I_{r_1}&0&0\\
			0&I_{r_2}\epsilon&0\\
			0&0&0
		\end{pmatrix}.	
	\end{equation*} 
	Notice that the $(2, 1)$-block dual quaternion submatrix of $\boldsymbol{M}$ is underdetermined. In order to make the main proof smooth, we postpone the existence illustration of $\boldsymbol{M}$ to the end of the proof.  Obviously, $\boldsymbol{M}$ is a blocked elementary dual quaternion matrix, so it is nonsingular. Implanting $\boldsymbol{M}$ and its inverse in the right-hand side matrix of \eqref{DSVDA} gives
	\begin{equation}\label{AUX}
		\boldsymbol{A}=\widetilde{\boldsymbol{U}}{\sf D}_{\boldsymbol{A}}\boldsymbol{M}\boldsymbol{M}^{-1}\begin{pmatrix}
			{\sf \Sigma}_{\boldsymbol{A}}\widetilde{\boldsymbol{W}}_{1}^{*}\\
			\hat{\Sigma}_{\boldsymbol{A}}\widetilde{\boldsymbol{W}}_{2}^{*}\\
			\widetilde{\boldsymbol{W}}_{3}^{*}
		\end{pmatrix}=\widetilde{\boldsymbol{U}}\begin{pmatrix}
			I_{r_{1}}&0&0\\
			\begin{pmatrix}
				-\boldsymbol{B}_{2}\boldsymbol{W}_{11}({\sf \Sigma}_{\boldsymbol{B}}^{1})^{-1}&0&0
			\end{pmatrix}\hat{\boldsymbol{T}}^{*}\epsilon&I_{r_{2}}\epsilon&0\\
			0&0&0
		\end{pmatrix}\boldsymbol{X}_{1}^{-1},
	\end{equation}
	where
	\begin{equation}\label{X}
		\begin{split}
			\boldsymbol{X}_{1}^{-1}=\boldsymbol{M}^{-1}\begin{pmatrix}
				{\sf \Sigma}_{\boldsymbol{A}}\widetilde{\boldsymbol{W}}_{1}^{*}\\
				\hat{\Sigma}_{\boldsymbol{A}}\widetilde{\boldsymbol{W}}_{2}^{*}\\
				\widetilde{\boldsymbol{W}}_{3}^{*}
			\end{pmatrix}.	
		\end{split}
	\end{equation} 
	
	It is turn to consider the decomposition of the  $n\times p$ dual quaternion matrix 
	\begin{equation}\label{MinvB}
		\boldsymbol{X}_{1}^{-1}\boldsymbol{B}=\boldsymbol{M}^{-1}\begin{pmatrix}
			{\sf \Sigma}_{\boldsymbol{A}}\widetilde{\boldsymbol{W}}_{1}^{*}\\
			\hat{\Sigma}_{\boldsymbol{A}}\widetilde{\boldsymbol{W}}_{2}^{*}\\
			\widetilde{\boldsymbol{W}}_{3}^{*}
		\end{pmatrix}\boldsymbol{B}.
	\end{equation} 
	To this end, partition $\boldsymbol{X}_{1}^{-1}\boldsymbol{B}$ to be a 3-by-1 blocked matrix and applying \cref{PrePSVD}, we have	\begin{equation}\label{XinvB}
		\boldsymbol{X}_{1}^{-1}\boldsymbol{B}\equiv
		\begin{pmatrix}
			\boldsymbol{B}_{1}\\
			\boldsymbol{B}_{2}\\
			\boldsymbol{B}_{3}
		\end{pmatrix}=\boldsymbol{T}{\boldsymbol D}_{\boldsymbol{B}}\boldsymbol{Y},
	\end{equation} 
	where $\boldsymbol{T}\in {\bf \mathbb{DQ}}^{n \times n}$, ${\boldsymbol D}_{\boldsymbol{B}}\in {\bf \mathbb{DQ}}^{n \times p}$ are given by \eqref{PPSVD-T} and \eqref{PPSVD-B}, respectively, and the submatrices 
	$\hat{\boldsymbol{T}}=(\boldsymbol{T}_{11},\boldsymbol{T}_{12},\boldsymbol{T}_{13})\in {\bf \mathbb{DQ}}^{r_{1} \times r_{1}},$
	$\boldsymbol{U}_{1}\in {\bf \mathbb{DQ}}^{r_{2} \times r_{2}},$
	$\boldsymbol{U}_{2}\in {\bf \mathbb{DQ}}^{(n-r_1-r_2) \times (n-r_1-r_2)}$ are all unitary matrices, 
	$\boldsymbol{G} \in {\bf \mathbb{DQ}}^{(p-r_{11})\times (p-r_{11})}$, $\boldsymbol{Y}\in {\bf \mathbb{DQ}}^{p \times p}$ are both nonsingular matrices given by \eqref{LEMV}.
	
	The relation \eqref{XinvB} provides us an alternative representation of  $\boldsymbol{B}$ to be
	\begin{equation*}
		\boldsymbol{B}=\boldsymbol{X}_{1}\boldsymbol{T}{\boldsymbol D}_{\boldsymbol{B}}\boldsymbol{Y}=\boldsymbol{X}{\boldsymbol D}_{\boldsymbol{B}}\boldsymbol{Y}
	\end{equation*}
	with  
	\begin{equation}\label{X1T}
		\boldsymbol{X}=\boldsymbol{X}_{1}\boldsymbol{T}\in{\bf \mathbb{DQ}}^{n \times n}.
	\end{equation}
	
	The next object is to find an $m\times m$ unitary dual quaternion matrix $\boldsymbol{U}$ such that
	\begin{equation*}
		\boldsymbol{A}=\boldsymbol{U}{\sf D}_{\boldsymbol{A}}\boldsymbol{T}^{-1}\boldsymbol{X}_{1}^{-1}=\boldsymbol{U}{\sf D}_{\boldsymbol{A}}\boldsymbol{X}^{-1}.
	\end{equation*}
	
	Let \begin{equation*}
		\boldsymbol{P}=\begin{pmatrix}
			\hat{\boldsymbol{T}}&0&0\\
			\boldsymbol{U}_{1}\begin{pmatrix}
				\boldsymbol{B}_{2}\boldsymbol{W}_{11}(\sf {\Sigma}_{\boldsymbol{B}}^{1})^{-1}&0&0
			\end{pmatrix}\hat{\boldsymbol{T}}^{*}\epsilon&\boldsymbol{U}_{1}&0\\
			0&0&I_{m-r_1-r_2}
		\end{pmatrix}.
	\end{equation*}
	Then the block elementary transformations yield
	\begin{equation}\label{PDAM}
		\boldsymbol{P}{\sf D}_{\boldsymbol{A}}\boldsymbol{M}=\boldsymbol{P}\begin{pmatrix}
			I_{r_{1}}&0&0\\
			0&I_{r_{2}}\epsilon&0\\
			0&0&0
		\end{pmatrix}\boldsymbol{M}=\begin{pmatrix}
			I_{r_{1}}&0&0\\
			0&I_{r_{2}}\epsilon&0\\
			0&0&0
		\end{pmatrix}\boldsymbol{M}\boldsymbol{T}=\begin{pmatrix}
			\hat{\boldsymbol{T}}&0&0\\
			0&\boldsymbol{U}_{1}\epsilon&0\\
			0&0&0
		\end{pmatrix},
	\end{equation}
	which implies ${\sf D}_{\boldsymbol{A}}\boldsymbol{M}=\boldsymbol{P}{\sf D}_{\boldsymbol{A}}\boldsymbol{M}\boldsymbol{T}^{-1}$. Then,  recalling \eqref{AUX}-\eqref{X} and \eqref{X1T} and repeated application of \eqref{PDAM} gives
	\begin{equation*}
		\begin{split}
			\boldsymbol{A}&=\widetilde{\boldsymbol{U}}{\sf D}_{\boldsymbol{A}}\boldsymbol{M}\boldsymbol{X}_{1}^{-1}
			=\widetilde{\boldsymbol{U}}\boldsymbol{P}{\sf D}_{\boldsymbol{A}}\boldsymbol{M}\boldsymbol{T}^{-1}	\boldsymbol{X}_{1}^{-1}\\ &=\widetilde{\boldsymbol{U}}\begin{pmatrix}
				\hat{\boldsymbol{T}}&0&0\\
				0&\boldsymbol{U}_{1}\epsilon&0\\
				0&0&0
			\end{pmatrix}\boldsymbol{T}^{-1}\boldsymbol{X}_{1}^{-1} 
			=\widetilde{\boldsymbol{U}}\begin{pmatrix}
				\hat{\boldsymbol{T}}&0&0\\
				0&\boldsymbol{U}_{1}&0\\
				0&0&I_{m-r_1-r_2}
			\end{pmatrix}\begin{pmatrix}
				I_{r_1}&0&0\\
				0&I_{r_2}\epsilon&0\\
				0&0&0
			\end{pmatrix}\boldsymbol{X}^{-1} \\
			&=\boldsymbol{U}{\sf D}_{\boldsymbol{A}}\boldsymbol{X}^{-1},
		\end{split}
	\end{equation*}
	where
	\begin{equation}\label{QU}
		\widetilde{\boldsymbol{Q}}=\begin{pmatrix}
			\hat{\boldsymbol{T}}&0&0\\
			0&\boldsymbol{U}_{1}&0\\
			0&0&I_{m-r_1-r_2}
		\end{pmatrix},\ \ \boldsymbol{U}=\widetilde{\boldsymbol{U}}\widetilde{\boldsymbol{Q}},
	\end{equation}
	are both $m\times m$ unitary dual quaternion matrices.
	Then the desired decomposition \eqref{pgsv} is obtained.
	
	The rest proof is concentrated on the existence of the blocked elementary dual quaternion matrix $\boldsymbol{M}$. As a matter of fact, it is enough to illustrate the existence of $\boldsymbol{B}_{2}$ in the $(2, 1)$-block submatrix of $\boldsymbol{M}$.	From \eqref{MinvB}-\eqref{XinvB} it is known that  $\boldsymbol{M}$ satisfies 
	\begin{equation}\label{existM}
		\boldsymbol{M}^{-1}\begin{pmatrix}
			{\sf \Sigma}_{\boldsymbol{A}}\widetilde{\boldsymbol{W}}_{1}^{*}\\
			\hat{\Sigma}_{\boldsymbol{A}}\widetilde{\boldsymbol{W}}_{2}^{*}\\
			\widetilde{\boldsymbol{W}}_{3}^{*}
		\end{pmatrix}\boldsymbol{B}\equiv\begin{pmatrix}
			\boldsymbol{B}_{1}\\
			\boldsymbol{B}_{2}\\
			\boldsymbol{B}_{3}
		\end{pmatrix},
	\end{equation}
	and in the remaining part we will illustrate the rationality of the existence of \eqref{existM}. 	
	The explicit form of \eqref{existM} is
	\begin{equation}\label{PSVDeq1}
		\begin{split}
			&\begin{pmatrix}
				I_{r_{1}}&0&0\\
				\begin{pmatrix}
					\boldsymbol{B}_{2}\boldsymbol{W}_{11}({\sf \Sigma}_{\boldsymbol{B}}^{1})^{-1}&0&0
				\end{pmatrix}\hat{\boldsymbol{T}}^{*}&I_{r_{2}}&0\\
				0&0&I_{n-r_{1}-r_{2}}
			\end{pmatrix}\begin{pmatrix}
				{\sf \Sigma}_{\boldsymbol{A}}\widetilde{\boldsymbol{W}}_{1}^{*}\\
				\hat{\Sigma}_{\boldsymbol{A}}\widetilde{\boldsymbol{W}}_{2}^{*}\\
				\widetilde{\boldsymbol{W}}_{3}^{*}
			\end{pmatrix}\boldsymbol{B}\\
			&=\begin{pmatrix}
				{\sf \Sigma}_{\boldsymbol{A}}\widetilde{\boldsymbol{W}}_{1}^{*}\boldsymbol{B}\\
				(\boldsymbol{B}_{2}\boldsymbol{W}_{11}({\sf \Sigma}_{\boldsymbol{B}}^{1})^{-1}\boldsymbol{T}_{11}^{*}{\sf \Sigma}_{\boldsymbol{A}}\widetilde{\boldsymbol{W}}_{1}^{*}+\hat{\Sigma}_{\boldsymbol{A}}\widetilde{\boldsymbol{W}}_{2}^{*})\boldsymbol{B}\\
				\widetilde{\boldsymbol{W}}_{3}^{*}\boldsymbol{B}
			\end{pmatrix}=\begin{pmatrix}
				\boldsymbol{B}_{1}\\
				\boldsymbol{B}_{2}\\
				\boldsymbol{B}_{3}
			\end{pmatrix}.
		\end{split}
	\end{equation}
	From \eqref{PSVDeq1} we have the following equations
	\begin{subequations}\label{blkB12}
		\begin{align}
			{\sf \Sigma}_{\boldsymbol{A}}\widetilde{\boldsymbol{W}}_{1}^{*}\boldsymbol{B}&=\boldsymbol{B}_{1},  \\
			(\boldsymbol{B}_{2}\boldsymbol{W}_{11}({\sf \Sigma}_{\boldsymbol{B}}^{1})^{-1}\boldsymbol{T}_{11}^{*}{\sf \Sigma}_{\boldsymbol{A}}\widetilde{\boldsymbol{W}}_{1}^{*}+\hat{\Sigma}_{\boldsymbol{A}}\widetilde{\boldsymbol{W}}_{2}^{*})\boldsymbol{B}&=\boldsymbol{B}_{2}, \\
			\widetilde{\boldsymbol{W}}_{3}^{*}\boldsymbol{B} &=\boldsymbol{B}_{3}.
		\end{align}
	\end{subequations}
	The equations \eqref{blkB12}(a) and \eqref{blkB12}(c) have illustrated the existence of $\boldsymbol{B}_{1}, \boldsymbol{B}_{3}$. We further discuss the existence of $\boldsymbol{B}_{2}$. A simple transformation of \eqref{blkB12}(b) gives 
	\begin{equation}\label{blkB2alt}
		\boldsymbol{B}_{2}(I_p-\boldsymbol{W}_{11}({\sf \Sigma}_{\boldsymbol{B}}^{1})^{-1}\boldsymbol{T}_{11}^{*}{\sf \Sigma}_{\boldsymbol{A}}\widetilde{\boldsymbol{W}}_{1}^{*}\boldsymbol{B})=\hat{\Sigma}_{\boldsymbol{A}}\widetilde{\boldsymbol{W}}_{2}^{*}\boldsymbol{B}.
	\end{equation}
	Substituting  \eqref{blkB12}(a) into \eqref{blkB2alt} we obtain
	\begin{equation}\label{B2B1}
		\boldsymbol{B}_{2}(I_p-\boldsymbol{W}_{11}({\sf \Sigma}_{\boldsymbol{B}}^{1})^{-1}\boldsymbol{T}_{11}^{*}\boldsymbol{B}_{1})=\hat{\Sigma}_{\boldsymbol{A}}\widetilde{\boldsymbol{W}}_{2}^{*}\boldsymbol{B}.
	\end{equation}
	
	From the DQSVD of $\boldsymbol{B}_{1}$ given in \eqref{B1SVD}, we have 
	\begin{equation}\label{eqB1eq}
		\boldsymbol{B}_{1}=\boldsymbol{T}_{11}{\sf \Sigma}_{\boldsymbol B}^{1}\boldsymbol{W}_{11}^{*}+\boldsymbol{T}_{12}\hat{\Sigma}_{\boldsymbol B}\boldsymbol{W}_{12}^{*}\epsilon.
	\end{equation}
	Then	substituting \eqref{eqB1eq} into \eqref{B2B1} gives
	\begin{equation}\label{eq:B2proj}
		\boldsymbol{B}_{2}(I_p-\boldsymbol{W}_{11}\boldsymbol{W}_{11}^{*})=\hat{\Sigma}_{\boldsymbol{A}}\widetilde{\boldsymbol{W}}_{2}^{*}\boldsymbol{B},
	\end{equation}
	It is obvious that there exists a dual quaternion matrix $\boldsymbol{B}_{2}$ such that \eqref{eq:B2proj} holds.
\end{proof}

Compared with the PSVD of real matrix pair \cite{Heath}, the dual quaternion factor matrix $\boldsymbol{Y}$ of $\boldsymbol{B}$ in \eqref{pgsv} is nonsingular rather than unitary. Even so, starting from the DQPSVD \eqref{pgsv} we can as well find DQSVD of the product of $\boldsymbol{A}$ and $\boldsymbol{B}$.  Explicitly, the product of $\boldsymbol{A}$ and $\boldsymbol{B}$ in \eqref{pgsv} gives a decomposition of $\boldsymbol{AB}$ as follows 
\begin{equation}\label{PSVDABpro}
	\begin{aligned}
		\boldsymbol{A}\boldsymbol{B}&=
		\boldsymbol{U}{\sf D}_{\boldsymbol{A}}{\boldsymbol D}_{\boldsymbol{B}}\boldsymbol{Y}\\
		&=\boldsymbol{U}\begin{blockarray}{ccc}
			r_{1} & r_{2} & n-(r_{1}+r_{2}) \\
			\begin{block}{(ccc)}
				I_{r_{1}}&0&0\\
				0&I_{r_{2}}\epsilon  &0\\
				0&0&0\\
			\end{block}
		\end{blockarray}\begin{blockarray}{ccc}
			r_{11}&p-r_{11}\\
			\begin{block}{(cc)c}
				{\sf \Sigma}_{\boldsymbol{B}}^{1}&0&r_{11}\\
				0&\begin{pmatrix}
					\hat{\Sigma}_{\boldsymbol{B}}\epsilon&0\\
					0&0
				\end{pmatrix}\boldsymbol{G}^{-1}&r_{1}-r_{11}\\
				0&\begin{pmatrix}
					{\boldsymbol \Sigma}_{\boldsymbol{B}}^{2}\\
					{\boldsymbol \Sigma}_{\boldsymbol{B}}^{3}
				\end{pmatrix}&n-r_1\\
			\end{block}
		\end{blockarray}\boldsymbol{Y}\\
		&=\boldsymbol{U}\begin{blockarray}{ccc}
			r_{11}&p-r_{11}\\
			\begin{block}{(cc)c}
				{\sf \Sigma}_{\boldsymbol{B}}^{1}&0&r_{11}\\
				0&\begin{pmatrix}
					\hat{\Sigma}_{\boldsymbol{B}}\epsilon&0\\
					0&0
				\end{pmatrix}&r_{1}-r_{11}\\
				0&\begin{pmatrix}
					{\boldsymbol \Sigma}_{\boldsymbol{B}}^{2}\boldsymbol{G}\epsilon\\
					0
				\end{pmatrix}&m-r_1\\
			\end{block}
		\end{blockarray}\boldsymbol{W}^{*},
	\end{aligned}
\end{equation}
where in the last equality we have used the relation \eqref{LEMV}.
Let 
\begin{equation*}
	{\bf F}\epsilon=\begin{pmatrix}
		\begin{pmatrix}
			\hat{\Sigma}_{\boldsymbol{B}}\epsilon&0\\
			0&0\\
		\end{pmatrix}\\
		{\boldsymbol \Sigma}_{\boldsymbol{B}}^{2}\boldsymbol{G}\epsilon
	\end{pmatrix}.
\end{equation*}
Then, from the QSVD of the quaternion matrix ${\bf F}$, there exist unitary quaternion matrices 
\begin{equation*}
	\hat{\bf H} \in \mathbb{Q}^{(r_{1}+r_{2}-r_{11}) \times (r_{1}+r_{2}-r_{11})},  \  \  
	\hat{\bf N} \in \mathbb{Q}^{(p-r_{11}) \times (p-r_{11})}
\end{equation*} 
such that
\begin{equation}\label{SVDF}
	{\bf F}=\hat{\bf H} \begin{pmatrix}\Sigma_{\boldsymbol{AB}} &  0 \\  0  &  0  \end{pmatrix}\hat{\bf N}^*
\end{equation}
with positive real diagonal matrix ${\Sigma_{\boldsymbol{AB}}}={\rm diag}({\nu}_{1},{\nu}_{2},\ldots,{\nu}_{\hat{r}_{11}}).$
Implanting \eqref{SVDF} into \eqref{PSVDABpro} we eventually obtain the DQSVD of the product dual quaternion matrices $\boldsymbol{A}\boldsymbol{B}$ to be
\begin{equation*}
	\begin{split}
		\boldsymbol{A}\boldsymbol{B}&=\boldsymbol{U}\begin{blockarray}{ccc}
			r_{11}&r_{1}+r_{2}-r_{11}&m-r_{1}-r_{2}\\
			\begin{block}{(ccc)}
				I&0&0\\
				0&\hat{{\bf H}}&0\\
				0&0&I\\
			\end{block}
		\end{blockarray}\begin{blockarray}{cccc}
			r_{11}&\hat{r}_{11}&p-r_{11}-\hat{r}_{11}\\
			\begin{block}{(ccc)c}
				{\sf \Sigma}_{\boldsymbol{B}}^{1}&0&0&r_{11}\\
				0&{\Sigma_{\boldsymbol{AB}}}\epsilon&0&\hat{r}_{11}\\
				0&0&0&m-r_{11}-\hat{r}_{11}\\
			\end{block}
		\end{blockarray}\\
		& \quad \cdot 
		\left(\begin{array}{cc}
			I_{r_{11}}&0\\
			0&\hat{{\bf N}}^*
		\end{array}\right) \boldsymbol{W}^{*}\\
		&=\boldsymbol{H}\begin{pmatrix}
			{\sf \Sigma}_{\boldsymbol{B}}^{1}&0&0\\
			0&{\Sigma_{\boldsymbol{AB}}}\epsilon&0\\
			0&0&0\\
		\end{pmatrix}\boldsymbol{N}^*.
	\end{split}
\end{equation*}
It is obvious that 
\begin{equation*}
	\begin{split}
		\boldsymbol{H}&=\boldsymbol{U}\begin{blockarray}{ccc}
			r_{11}&r_{1}+r_{2}-r_{11}&n-r_{1}-r_{2}\\
			\begin{block}{(ccc)}
				I&0&0\\
				0&\hat{{\bf H}}&0\\
				0&0&I\\
			\end{block}
		\end{blockarray}\in\mathbb{DQ}^{m \times m},  \\ 
		\boldsymbol{N}&=\boldsymbol{W}\begin{blockarray}{ccc}
			\begin{block}{(cc)c}
				I&0&r_{11}\\
				0&\hat{{\bf N}}&p-r_{11}\\
			\end{block}
		\end{blockarray}
		\in \mathbb{DQ}^{p \times p},	
	\end{split}
\end{equation*}
are both unitary dual quaternion matrices.

Golub and Zha \cite[Theorem 2.1]{GolubZha1994} derived CCD decomposition for the given real matrices $A$ and $B$ having the same number of rows. Motivated by the real CCD decomposition, we present the DQCCD decomposition theorem of a dual quaternion matrix pair, and provide a short remark about the obtained decomposition.

\begin{theorem}[DQCCD]
	Suppose $\boldsymbol{A}\in {\bf \mathbb{DQ}}^{m \times n}$, $\boldsymbol{B}\in {\bf \mathbb{DQ}}^{m \times l}$, ${\rm rank}(\boldsymbol{A})=p$, ${\rm rank}(\boldsymbol{B})=q$. There exist a unitary matrix $\boldsymbol{Q}\in {\bf \mathbb{DQ}}^{m \times m}$ and two dual quaternion matrices $\boldsymbol{X_{\boldsymbol{A}}}\in {\bf \mathbb{DQ}}^{n \times n}$, $\boldsymbol{X_{\boldsymbol{B}}}\in {\bf \mathbb{DQ}}^{l \times l}$, such that\\
	$$\boldsymbol{A}=\boldsymbol{Q}({\sf \Sigma}_{\boldsymbol{A}},0)\boldsymbol{X_{\boldsymbol{A}}},\ \boldsymbol{B}=\boldsymbol{Q}(\Sigma_{\boldsymbol{B}},0)\boldsymbol{X_{\boldsymbol{B}}},$$
	where
	\begin{equation}\label{CCDSigm}
		{\sf \Sigma}_{\boldsymbol{A}}=\left( \begin{array}{ccccc}
			I&0&0&0&0\\
			0&I&0&0&0\\
			0&0&{\sf C}&0&0\\
			0&0&0&D\epsilon&0\\
			0&0&0&0&0\\
			\hline
			{\Sigma}\epsilon&0&0&0&0\\
			0&0&0&0&0\\
			0&0&{\sf S}&0&0\\
			0&0&0&I&0\\
			0&0&0&0&I
		\end{array}\right),\ 
		\Sigma_{\boldsymbol{B}}=\left( \begin{array}{ccc}
			I_{q}\\
			\hline
			0
		\end{array}\right),
	\end{equation}
	$\boldsymbol{Q}$, $\boldsymbol{X_{\boldsymbol{A}}}$, $\boldsymbol{X_{\boldsymbol{B}}}$ are given in	
	\eqref{QUV} and \eqref{Xab}, respectively. $\Sigma$, $D$ are real diagonal matrices with positive diagonal elements, $\sf C$ and $\sf S$ are appreciable dual matrix and
	\begin{align*}
		\Sigma&={\rm diag}{(\sigma_{1},\sigma_{2},\ldots,\sigma_{r})},\quad \sigma_{1}\ge\sigma_{2}\ge\cdots\ge\sigma_{r}>0,\\
		{\sf C}&={\rm diag}({\sf c}_{i+1},{\sf c}_{i+2},\ldots,{\sf c}_{i+j}),\ 1>{\sf c}_{i+1}\ge {\sf c}_{i+2}\ge\cdots\ge {\sf c}_{i+j}>0,\\
		{\sf S}&={\rm diag}({\sf s}_{i+1},{\sf s}_{i+2},\ldots,{\sf s}_{i+j}),\ 0<{\sf s}_{i+1}\le {\sf s}_{i+2}\le\cdots\le {\sf s}_{i+j}<1,\\
		& \qquad\qquad\qquad\qquad\qquad   {\sf C}^2+{\sf S}^2=I_j.
	\end{align*}
\end{theorem}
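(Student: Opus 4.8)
The plan is to realize the DQCCD by composing three facts already established in the paper: the QR / unitary decomposition of a dual quaternion matrix (\cref{QRT}, \cref{QRR}, \cref{UD}) applied to $\boldsymbol{B}$; the DQSVD (\cref{lm:4.1}) applied to $\boldsymbol{A}$ written in an orthonormal frame adapted to ${\rm range}(\boldsymbol{B})$; and the $2$-by-$1$ DQCS decomposition (\cref{coroCS}, a reading of \cref{lm:3.3}) applied to a block column of the left unitary factor of that DQSVD. The observation that makes everything fit is that the first block column $\left(\begin{smallmatrix}{\sf D}_{11}\\ {\sf D}_{21}\end{smallmatrix}\right)$ of the DQCS form \eqref{DQCSD} is \emph{exactly} the matrix ${\sf \Sigma}_{\boldsymbol{A}}$ displayed in \eqref{CCDSigm}, with the same orderings of $\Sigma$, $D$, ${\sf C}$, ${\sf S}$ and the same relation ${\sf C}^{2}+{\sf S}^{2}=I$.

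First I would orthonormalize ${\rm range}(\boldsymbol{B})$: by \cref{QRR} there are $\boldsymbol{Q}_{1}\in{\bf \mathbb{DQ}}^{m\times q}$ with orthonormal columns and $\boldsymbol{R}_{\boldsymbol{B}}\in{\bf \mathbb{DQ}}^{q\times l}$ with $\boldsymbol{B}=\boldsymbol{Q}_{1}\boldsymbol{R}_{\boldsymbol{B}}$; complete $\boldsymbol{Q}_{1}$ to a unitary $\boldsymbol{Q}_{0}=(\boldsymbol{Q}_{1},\boldsymbol{Q}_{2})\in{\bf \mathbb{DQ}}^{m\times m}$, and let $\widetilde{\boldsymbol{X}}_{\boldsymbol{B}}\in{\bf \mathbb{DQ}}^{l\times l}$ be the matrix whose first $q$ rows are $\boldsymbol{R}_{\boldsymbol{B}}$ and whose remaining $l-q$ rows are zero. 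Then $\boldsymbol{B}=\boldsymbol{Q}_{0}(\Sigma_{\boldsymbol{B}},0)\widetilde{\boldsymbol{X}}_{\boldsymbol{B}}$ with $\Sigma_{\boldsymbol{B}}=\left(\begin{smallmatrix}I_{q}\\ 0\end{smallmatrix}\right)$ as in \eqref{CCDSigm}. Unlike the real CCD, $\widetilde{\boldsymbol{X}}_{\boldsymbol{B}}$ need not — and in general cannot — be made nonsingular, since the standard part of $\boldsymbol{B}$ may have rank strictly below $q={\rm rank}(\boldsymbol{B})$; the statement only requires $\boldsymbol{X}_{\boldsymbol{B}}$ to be a dual quaternion matrix, so this is not an issue.

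Next I would analyze $\boldsymbol{A}$ in this frame. Put $\boldsymbol{C}=\boldsymbol{Q}_{0}^{*}\boldsymbol{A}$ and apply \cref{lm:4.1}: there are unitary $\boldsymbol{P}\in{\bf \mathbb{DQ}}^{m\times m}$, $\boldsymbol{Z}\in{\bf \mathbb{DQ}}^{n\times n}$ and a diagonal matrix ${\sf \Sigma}_{p}\in{\bf \mathbb{D}}^{p\times p}$ of positive dual numbers ($p={\rm rank}(\boldsymbol{A})$) with $\boldsymbol{P}^{*}\boldsymbol{C}\boldsymbol{Z}={\rm diag}({\sf \Sigma}_{p},0)$. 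Partitioning $\boldsymbol{P}$ into row blocks of sizes $q,m-q$ and column blocks of sizes $p,m-p$, the first block column $\left(\begin{smallmatrix}\boldsymbol{P}_{11}\\ \boldsymbol{P}_{21}\end{smallmatrix}\right)$ has orthonormal columns (it is a block column of the unitary matrix $\boldsymbol{P}$), so \cref{coroCS} supplies unitary $\boldsymbol{U}_{1}\in{\bf \mathbb{DQ}}^{q\times q}$, $\boldsymbol{U}_{2}\in{\bf \mathbb{DQ}}^{(m-q)\times(m-q)}$, $\boldsymbol{V}_{1}\in{\bf \mathbb{DQ}}^{p\times p}$ with
\begin{equation}\label{QUV}
	\begin{pmatrix}\boldsymbol{U}_{1}^{*}&0\\ 0&\boldsymbol{U}_{2}^{*}\end{pmatrix}\begin{pmatrix}\boldsymbol{P}_{11}\\ \boldsymbol{P}_{21}\end{pmatrix}\boldsymbol{V}_{1}={\sf \Sigma}_{\boldsymbol{A}},\qquad
	\boldsymbol{Q}=\boldsymbol{Q}_{0}\begin{pmatrix}\boldsymbol{U}_{1}&0\\ 0&\boldsymbol{U}_{2}\end{pmatrix},
\end{equation}
where ${\sf \Sigma}_{\boldsymbol{A}}$ has exactly the form \eqref{CCDSigm}. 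The reason for using the CS decomposition here rather than a plain DQSVD of $\boldsymbol{A}_1=\boldsymbol{Q}_1^{*}\boldsymbol{A}$ is precisely that it yields a \emph{block-diagonal} left factor ${\rm diag}(\boldsymbol{U}_{1},\boldsymbol{U}_{2})$, which mixes only within ${\rm range}(\boldsymbol{B})$ (the first $q$ coordinates in the frame $\boldsymbol{Q}_{0}$) and only within its orthogonal complement, and therefore leaves the shape of the $\boldsymbol{B}$-factorization undisturbed.

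Finally I would assemble the two decompositions. From $\boldsymbol{P}^{*}\boldsymbol{C}\boldsymbol{Z}={\rm diag}({\sf \Sigma}_{p},0)$ we get $\boldsymbol{C}=\left(\begin{smallmatrix}\boldsymbol{P}_{11}\\ \boldsymbol{P}_{21}\end{smallmatrix}\right)({\sf \Sigma}_{p},0)\boldsymbol{Z}^{*}$; substituting \eqref{QUV} and pushing ${\sf \Sigma}_{p}$, $\boldsymbol{V}_{1}^{*}$, $\boldsymbol{Z}^{*}$ past $({\sf \Sigma}_{\boldsymbol{A}},0)$ gives $\boldsymbol{A}=\boldsymbol{Q}_{0}\boldsymbol{C}=\boldsymbol{Q}({\sf \Sigma}_{\boldsymbol{A}},0)\boldsymbol{X}_{\boldsymbol{A}}$; and inserting ${\rm diag}(\boldsymbol{U}_{1},\boldsymbol{U}_{2})\,{\rm diag}(\boldsymbol{U}_{1},\boldsymbol{U}_{2})^{*}=I_{m}$ into $\boldsymbol{B}=\boldsymbol{Q}_{0}(\Sigma_{\boldsymbol{B}},0)\widetilde{\boldsymbol{X}}_{\boldsymbol{B}}$ and using ${\rm diag}(\boldsymbol{U}_{1}^{*},\boldsymbol{U}_{2}^{*})(\Sigma_{\boldsymbol{B}},0)=(\Sigma_{\boldsymbol{B}},0)\,{\rm diag}(\boldsymbol{U}_{1}^{*},I_{l-q})$ gives $\boldsymbol{B}=\boldsymbol{Q}(\Sigma_{\boldsymbol{B}},0)\boldsymbol{X}_{\boldsymbol{B}}$, with
\begin{equation}\label{Xab}
	\boldsymbol{X}_{\boldsymbol{A}}=\begin{pmatrix}\boldsymbol{V}_{1}^{*}{\sf \Sigma}_{p}&0\\ 0&I_{n-p}\end{pmatrix}\boldsymbol{Z}^{*},\qquad
	\boldsymbol{X}_{\boldsymbol{B}}=\begin{pmatrix}\boldsymbol{U}_{1}^{*}\boldsymbol{R}_{\boldsymbol{B}}\\ 0\end{pmatrix}.
\end{equation}
Since $\boldsymbol{Q}_{0}$ and ${\rm diag}(\boldsymbol{U}_{1},\boldsymbol{U}_{2})$ are unitary, $\boldsymbol{Q}$ is unitary, and the decomposition is as claimed. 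I expect the main work — and the only place needing care — to be bookkeeping: checking that with the splits $(q,m-q)$ and $(p,m-p)$ the first block column of \cref{lm:3.3} coincides term by term with \eqref{CCDSigm}, and that the two ``absorbing'' manipulations really move the leftover factors to the correct side with the correct dimensions. There is no analytic obstacle, since DQSVD, QR and DQCS are already in hand, and the dual-quaternion-specific features (the infinitesimal blocks $\Sigma\epsilon$, $D\epsilon$, ${\sf C}$, ${\sf S}$, and the fact that $\boldsymbol{X}_{\boldsymbol{A}}$, $\boldsymbol{X}_{\boldsymbol{B}}$ are only dual quaternion matrices rather than nonsingular ones) are inherited directly from the structure of \eqref{DQCSD}.
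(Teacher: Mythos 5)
Your proof is correct, and at its core it uses the same machinery the paper does: orthonormalize the column space of $\boldsymbol{B}$ via QR, extract the ``angle'' structure between ${\rm range}(\boldsymbol{A})$ and ${\rm range}(\boldsymbol{B})$ with the $2$-by-$1$ DQCS decomposition (\cref{coroCS}), and reassemble. There are two genuine (if modest) departures worth noting. First, where the paper applies the QR/unitary decomposition to $\boldsymbol{A}$ as well, producing $\boldsymbol{Q}_{\boldsymbol{A}}$ and then CS-decomposing the first block column of the overlap matrix $\boldsymbol{Q}_{2}^{*}\boldsymbol{Q}_{1}$, you instead take the DQSVD of $\boldsymbol{Q}_{0}^{*}\boldsymbol{A}$ and CS-decompose the first block column of its left unitary factor $\boldsymbol{P}$; these are interchangeable since each yields an orthonormal basis of ${\rm range}(\boldsymbol{Q}_{0}^{*}\boldsymbol{A})$ on which \cref{coroCS} acts identically. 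Second, and more substantively, the paper pads $\boldsymbol{R}_{1}$, $\boldsymbol{R}_{2}$ to \emph{nonsingular} square factors, which forces it to split into three cases according to whether those R-factors contain infinitesimal rows (it fully carries out only the case where both are appreciable). You pad with zeros instead, which always works and collapses the case analysis into a single uniform argument — at the cost that your $\boldsymbol{X}_{\boldsymbol{B}}$ (and $\boldsymbol{X}_{\boldsymbol{A}}$, when some singular values are infinitesimal) is never nonsingular for $l>q$. Since the theorem as stated only asks for dual quaternion matrices, not nonsingular ones, this is a legitimate and arguably cleaner route, though it forgoes the nonsingularity that the paper achieves in its leading case and that underlies the canonical-correlation interpretation the authors care about. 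Your displayed formulas for $\boldsymbol{Q}$, $\boldsymbol{X}_{\boldsymbol{A}}$, $\boldsymbol{X}_{\boldsymbol{B}}$ differ from the paper's equations of the same labels, but both sets witness the same existence statement with the same ${\sf \Sigma}_{\boldsymbol{A}}$, $\Sigma_{\boldsymbol{B}}$; no gap.
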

\begin{proof}
	From \cref{UD}  we  have the unitary decompositions of dual quaternion matrices $\boldsymbol{A}$ and $\boldsymbol{B}$  as follows
	\begin{subequations}\label{ABdeco}
		\begin{align}
			\boldsymbol{A} =\boldsymbol{Q}_{\boldsymbol{A}}\boldsymbol{R}_{1}
			&=(\boldsymbol{Q}_{\boldsymbol{A}},0)
			\begin{blockarray}{cc}
				n &    \\
				\begin{block}{(c)c}
					\boldsymbol{R}_{1} & p\\
					\hat{\boldsymbol{R}}_{1} &  n-p\\
				\end{block}
			\end{blockarray}  \equiv	(\boldsymbol{Q}_{\boldsymbol{A}},0)\boldsymbol{R}_{\boldsymbol{A}}, \\
			\boldsymbol{B} =\boldsymbol{Q}_{\boldsymbol{B}}\boldsymbol{R}_{2}
			&=(\boldsymbol{Q}_{\boldsymbol{B}},0)
			\begin{blockarray}{cc}
				l &    \\
				\begin{block}{(c)c}
					\boldsymbol{R}_{2}& q\\
					\hat{\boldsymbol{R}}_{2}& l-q\\
				\end{block}
			\end{blockarray} \equiv	(\boldsymbol{Q}_{\boldsymbol{B}},0)\boldsymbol{R}_{\boldsymbol{B}}.
		\end{align}
	\end{subequations}
	The choices of $\hat{\boldsymbol{R}}_{1}$ and $\hat{\boldsymbol{R}}_{2}$ are contingent on the specific forms of $\boldsymbol{R}_{1}$ and $\boldsymbol{R}_{2}$. Based on the distinct configurations of $\boldsymbol{R}_{1}$ and $\boldsymbol{R}_{2}$, three scenarios can be delineated as follows:
	\begin{itemize}
		\item
		The dual quaternion  row vectors of both $\boldsymbol{R}_{1}$ and $\boldsymbol{R}_{2}$ are appreciable.
		\item
		Either $\boldsymbol{R}_{1}$ or $\boldsymbol{R}_{2}$ contains infinitesimal dual quaternion row vectors.
		\item
		The nonzero elements of both $\boldsymbol{R}_{1}$ and $\boldsymbol{R}_{2}$ are infinitesimal dual quaternions, which implies that both $\boldsymbol{A}$ and $\boldsymbol{B}$ are infinitesimal dual quaternion matrices.
	\end{itemize}
	
	For the first case, all the dual quaternion row vectors of $\boldsymbol{R}_{1}$ and $\boldsymbol{R}_{2}$ are appreciable. This implies that ${\rm rank}(\boldsymbol{A})={\rm Arank}(\boldsymbol{A})=p$ and ${\rm rank}(\boldsymbol{B})={\rm Arank}(\boldsymbol{B})=q$. It is easy to find dual quaternion matrices $\hat{\boldsymbol{R}}_{1}$ and $\hat{\boldsymbol{R}}_{2}$ such that $\boldsymbol{R}_{\boldsymbol{A}}$ and $\boldsymbol{R}_{\boldsymbol{B}}$ are nonsingular, respectively. Expand $\boldsymbol{Q}_{\boldsymbol{A}}$ and $\boldsymbol{Q}_{\boldsymbol{B}}$ to get $\boldsymbol{Q}_{1}=(\boldsymbol{Q}_{\boldsymbol{A}},\hat{\boldsymbol{Q}}_{\boldsymbol{A}})$ and $\boldsymbol{Q}_{2}=(\boldsymbol{Q}_{\boldsymbol{B}},\hat{\boldsymbol{Q}}_{\boldsymbol{B}})$ such that they are both $m\times m$ unitary dual quaternion matrices. Therefore, $\boldsymbol{Q}_{2}^{*}\boldsymbol{Q}_{1}$ is also a unitary matrix. From \cref{coroCS}, there exist unitary matrices $\boldsymbol{U}_{1}\in {\bf \mathbb{DQ}}^{q\times q}$, $\boldsymbol{U}_{2}\in {\bf \mathbb{DQ}}^{(m-q)\times (m-q)}$ and $\boldsymbol{V}_{1}\in {\bf \mathbb{DQ}}^{p\times p}$ such that 
	\begin{equation}\label{QBACS}
		\begin{pmatrix}
			\boldsymbol{U}_{1}^{*}&0\\
			0&\boldsymbol{U}_{2}^{*}
		\end{pmatrix}\begin{pmatrix}
			\boldsymbol{Q}_{\boldsymbol{B}}^{*}\boldsymbol{Q}_{\boldsymbol{A}}\\
			\hat{\boldsymbol{Q}}_{\boldsymbol{B}}^{*}\boldsymbol{Q}_{\boldsymbol{A}}
		\end{pmatrix}\boldsymbol{V}_{1}={\sf \Sigma}_{\boldsymbol {A}},
	\end{equation}
	where ${\sf \Sigma}_{\boldsymbol {A}}$ is given by \eqref{CCDSigm}.
	
	Let 
	\begin{equation}\label{QUV}
		\begin{split}
			\boldsymbol{U}&={\rm diag}(\boldsymbol{U}_{1},\ \boldsymbol{U}_{2})\in {\bf \mathbb{DQ}}^{m\times m}, \  \ \widetilde{\boldsymbol{U}}={\rm diag}(\boldsymbol{U}_{1},\ I_{l-q})\in {\bf \mathbb{DQ}}^{l\times l},\\
			\widetilde{\boldsymbol{V}}&={\rm diag}(\boldsymbol{V}_{1},\ I_{n-p})\in {\bf \mathbb{DQ}}^{n\times n},\quad\boldsymbol{Q}=\boldsymbol{Q}_{2}\boldsymbol{U}\in {\bf \mathbb{DQ}}^{m\times m}.
		\end{split}
	\end{equation}
	From \eqref{QBACS} it is straightforward to verify the following relations:
	\begin{subequations}\label{QABSig}
		\begin{align}
			\boldsymbol{Q}_{\boldsymbol{A}} &=\boldsymbol{Q}_{2}\boldsymbol{U}{\sf \Sigma}_{\boldsymbol{A}}\boldsymbol{V}_{1}^{*}=\boldsymbol{Q}{\sf \Sigma}_{\boldsymbol{A}}\boldsymbol{V}_{1}^{*},  \\
			\boldsymbol{Q}_{\boldsymbol{B}}&=\boldsymbol{Q}_2\begin{pmatrix}
				I_{q}\\
				0
			\end{pmatrix}=\boldsymbol{Q}\boldsymbol{U}^{*}\begin{pmatrix}
				I_{q}\\
				0
			\end{pmatrix}=\boldsymbol{Q}\begin{pmatrix}
				\boldsymbol{U}_{1}^{*}\\
				0
			\end{pmatrix}=\boldsymbol{Q}\Sigma_{\boldsymbol{B}}\boldsymbol{U}_{1}^{*}.
		\end{align}
	\end{subequations}
	Substituting \eqref{QABSig} into \eqref{ABdeco} yields	
	\begin{equation*}
		\begin{aligned}
			\boldsymbol{A}&=(\boldsymbol{Q}_{\boldsymbol{A}},0)\boldsymbol{R}_{\boldsymbol{A}}=(\boldsymbol{Q}{\sf \Sigma}_{\boldsymbol{A}}\boldsymbol{V}_{1}^{*},0)\boldsymbol{R}_{\boldsymbol{A}}=\boldsymbol{Q}({\sf \Sigma}_{\boldsymbol{A}},0)\boldsymbol{X}_{\boldsymbol{A}},\\
			\boldsymbol{B}&=(\boldsymbol{Q}_{\boldsymbol{B}},0)\boldsymbol{R}_{\boldsymbol{B}}=(\boldsymbol{Q}\Sigma_{\boldsymbol{B}}\boldsymbol{U}_{1}^{*},0)\boldsymbol{R}_{\boldsymbol{B}}=\boldsymbol{Q}(\Sigma_{\boldsymbol{B}},0)\boldsymbol{X}_{\boldsymbol{B}},	\end{aligned}
	\end{equation*}
	where 
	\begin{equation}\label{Xab}
		\boldsymbol{X}_{\boldsymbol{A}}=\widetilde{\boldsymbol{V}}^{*}\boldsymbol{R}_{\boldsymbol{A}}\in {\bf \mathbb{DQ}}^{n\times n}, \quad \boldsymbol{X}_{\boldsymbol{B}}=\widetilde{\boldsymbol{U}}^{*}\boldsymbol{R}_{\boldsymbol{B}}\in {\bf \mathbb{DQ}}^{l\times l}
	\end{equation}
	are both nonsingular dual quaternion matrices. 
	
	Notice that the results in the first case can degenerate to those of the CCD on quaternion ring by taking the infinitesimal parts of both $\boldsymbol{A}$ and $\boldsymbol{B}$ to be zero matrices.	
	For the second case, if either $\boldsymbol{R}_{1}$ or $\boldsymbol{R}_{2}$ contains infinitesimal dual quaternion row vectors, then either $\boldsymbol{X}_{\boldsymbol{A}}$ or $\boldsymbol{X}_{\boldsymbol{B}}$ in \eqref{Xab} is singular.
	For the last case, if the nonzero elements of both $\boldsymbol{R}_{1}$ and $\boldsymbol{R}_{2}$ are infinitesimal dual quaternions, the problem becomes the CCD of quaternion matrix pair over quaternion ring except for the extra dual unit $\epsilon$.
\end{proof}

\begin{remark}
	Unlike the CCD of real matrix pair, after performing the unitary decompositions on the dual quaternion matrices $\boldsymbol{A}$ and $\boldsymbol{B}$ to obtain the trapezoidal dual quaternion matrices $\boldsymbol{R}_{1}$ and $\boldsymbol{R}_{2}$, it is not ensured that these matrices can be extended into a nonsingular matrix. This is because some of dual quaternion row vectors might be  infinitesimal in $\boldsymbol{R}_{1}$ and/or $\boldsymbol{R}_{2}$. As a matter of fact, if either $\boldsymbol{R}_{1}$ or $\boldsymbol{R}_{2}$ contains infinitesimal dual quaternion row vectors, the derived findings have forfeited the inherent significance of canonical correlation among the columns of $\boldsymbol{A}$ and $\boldsymbol{B}$.  
\end{remark}

\section{Example illustration}\label{examp5}

In this section, we present three artificially toy examples to verify the principle of different quotient-type SVD of the dual quaternion matrix pair $\{{\boldsymbol A}, {\boldsymbol B}\}$, under the assumption ${\boldsymbol A}$ and ${\boldsymbol B}$ having the same number of columns. Example verifications for the principle of DQPSVD and DQCCD are left to the reader.


\begin{example}\label{examp1}
	Find the quotient-type singular value decomposition of the dual quaternion matrix pair $\{{\boldsymbol A}, {\boldsymbol B}\}$, where
	\begin{equation*}
		\boldsymbol{A}= \begin{pmatrix}
			\frac{\sqrt{2}}{2}+\epsilon&\frac{1}{2}{\bf k}\epsilon&0&0\\
			\frac{\sqrt{2}}{2}{\bf k}\epsilon&\frac{1}{2}&0&0\\
			0&0&0&0
		\end{pmatrix},\quad
		\boldsymbol{B}=\begin{pmatrix}
			\frac{\sqrt{2}}{2}\epsilon&\frac{1}{2}{\bf j}\epsilon&0&0\\
			0&\frac{1}{2}&0&0\\
			0&0&\frac{\sqrt{2}}{2}\epsilon&0
		\end{pmatrix}.
	\end{equation*}
\end{example}

{\it Solution.}
First of all, according to \cref{lm:4.1} there exist two unitary dual quaternion matrices 
\begin{equation*}
	\boldsymbol{P}=\begin{pmatrix}
		1&(\frac{\sqrt{2}}{2}{\bf k}-1)\epsilon&0&0&-\epsilon&-\frac{\sqrt{2}}{2}{\bf k}\epsilon\\
		({\bf k}+\frac{\sqrt{2}}{2})\epsilon&\frac{\sqrt{2}}{2}&0&0&0&-\frac{\sqrt{2}}{2}\\
		0&0&\epsilon&1&0&0\\
		\epsilon&\frac{\sqrt{2}}{2}{\bf j}\epsilon&0&0&1&\frac{\sqrt{2}}{2}{\bf j}\epsilon\\
		\frac{\sqrt{2}}{2}\epsilon&\frac{\sqrt{2}}{2}&0&0&{\bf j}\epsilon&\frac{\sqrt{2}}{2}\\
		0&0&1&-\epsilon&0&0
	\end{pmatrix}
\end{equation*} and \begin{equation*}
	\boldsymbol{Q}=\begin{pmatrix}
		1&-\epsilon&0&0\\
		\epsilon&1&0&0\\
		0&0&1&-\epsilon\\
		0&0&\epsilon&1\\
	\end{pmatrix},
\end{equation*} 
such that the DQSVD of $\boldsymbol{C}=\begin{pmatrix}
	\boldsymbol{A}\\
	\boldsymbol{B}
\end{pmatrix}$ is
\begin{equation*}
	\boldsymbol{P}^{*}\boldsymbol{C}\boldsymbol{Q}
	=\left(\begin{array}{c|c}
		{\sf \Sigma}_{\boldsymbol{C}}&0  \\
		\hline
		0&0  \end{array}\right)
	\equiv
	\left(\begin{array}{cc|c}
		{\sf \Sigma}_{t}&0&0  \\
		0&{\Sigma}_{s}\epsilon&0   \\
		\hline
		0&0&0  \\
	\end{array}\right)
	=\left(\begin{array}{ccc|c}
		\frac{\sqrt{2}}{2}+\epsilon&0&0&0\\
		0&\frac{\sqrt{2}}{2}&0&0\\
		0&0&\frac{\sqrt{2}}{2}\epsilon&0\\
		\hline
		0&0&0&0\\
		0&0&0&0\\
		0&0&0&0
	\end{array}\right),
\end{equation*}
from which we have  ${\sf \Sigma}_{\boldsymbol{C}} ={\rm diag}({\sf \Sigma}_{t},{\Sigma}_{s}\epsilon)$ and
\begin{equation*}
	{\sf \Sigma}_{t}={\rm diag}(\frac{\sqrt{2}}{2}+\epsilon,\frac{\sqrt{2}}{2}),\quad \Sigma_{s}=\frac{\sqrt{2}}{2}.
\end{equation*}
Here, we have used \eqref{SVDP} with $s=1,t=2$. 
Partition $\boldsymbol{P}$ as the following 2-by-2 blocked matrix
\begin{equation*}
	\boldsymbol{P}=\begin{pmatrix}
		\boldsymbol{P}_{11}&\boldsymbol{P}_{12}\\
		\boldsymbol{P}_{21}&\boldsymbol{P}_{22}
	\end{pmatrix}=
	\left(\begin{array}{ccc|ccc}
		1&(\frac{\sqrt{2}}{2}{\bf k}-1)\epsilon&0&0&-\epsilon&-\frac{\sqrt{2}}{2}{\bf k}\epsilon\\
		({\bf k}+\frac{\sqrt{2}}{2})\epsilon&\frac{\sqrt{2}}{2}&0&0&0&-\frac{\sqrt{2}}{2}\\
		0&0&\epsilon&1&0&0\\
		\hline
		\epsilon&\frac{\sqrt{2}}{2}{\bf j}\epsilon&0&0&1&\frac{\sqrt{2}}{2}{\bf j}\epsilon\\
		\frac{\sqrt{2}}{2}\epsilon&\frac{\sqrt{2}}{2}&0&0&{\bf j}\epsilon&\frac{\sqrt{2}}{2}\\
		0&0&1&-\epsilon&0&0\\
	\end{array}\right).
\end{equation*}
From \cref{coroCS}  the DQCS decomposition of $\begin{pmatrix}
	\boldsymbol{P}_{11}\\
	\boldsymbol{P}_{21}
\end{pmatrix}$ 
is derived as follows
\begin{equation}\label{PPGSVD1}
	\begin{aligned}	
		\begin{pmatrix}
			\boldsymbol{P}_{11}\\
			\boldsymbol{P}_{21}
		\end{pmatrix}
		&=\begin{pmatrix}
			\boldsymbol{U}& 0  \\
			0  &\boldsymbol{V}
		\end{pmatrix}\begin{pmatrix}
			{\sf \Sigma}_{\boldsymbol{A}}\\
			{\sf \Sigma}_{\boldsymbol{B}}
		\end{pmatrix}\boldsymbol{W}^{*}\\
		&=\left(\begin{array}{ccc|ccc}
			1&{\bf k}\epsilon&0&0&0&0\\
			{\bf k}\epsilon&1&0&0&0&0\\
			0&0&1&0&0&0\\
			\hline
			0&0&0&1&{\bf j}\epsilon&0\\
			0&0&0&{\bf j}\epsilon&1&0\\
			0&0&0&0&0&1
		\end{array}\right)
		\begin{pmatrix}
			1&0&0\\
			0&\frac{\sqrt{2}}{2}&0\\
			0&0&\epsilon\\
			\hline
			\epsilon&0&0\\
			0&\frac{\sqrt{2}}{2}&0\\
			0&0&1
		\end{pmatrix}\begin{pmatrix}
			1&-\epsilon&0\\
			\epsilon&1&0\\
			0&0&1
		\end{pmatrix}.
	\end{aligned}
\end{equation}
Then, from \eqref{QSVDX1} we obtain a singular dual quaternion matrix
\begin{equation}\label{XGSVD1}
	\boldsymbol{X}={\rm diag}({\boldsymbol{W}}^*{\sf \Sigma}_{\boldsymbol{C}},1)\boldsymbol{Q}^{*}
	=\begin{pmatrix}
		\frac{\sqrt{2}}{2}+\epsilon&0&0&0\\
		0&\frac{\sqrt{2}}{2}&0&0\\
		0&0&\frac{\sqrt{2}}{2}\epsilon&0\\
		0&0&-\epsilon&1\\
	\end{pmatrix}.
\end{equation} 

Combining \eqref{XGSVD1} with the submatrices in \eqref{PPGSVD1} we obtain the DQGSVD1 of $\{\boldsymbol{A}, \boldsymbol{B}\}$
in the form of \eqref{eq:AB1}, that is,
\begin{equation*}
	\begin{aligned}
		\boldsymbol{A}&=\boldsymbol{U}({\sf \Sigma}_{\boldsymbol{A}}, \  0)\boldsymbol{X}
		=\left(\begin{array}{ccc}
			1&{\bf k}\epsilon&0\\
			{\bf k}\epsilon&1&0\\
			0&0&1
		\end{array}\right)
		\left(\begin{array}{ccc|c}
			1&0&0& 0\\
			0&\frac{\sqrt{2}}{2}&0&0\\
			0&0&\epsilon&0
		\end{array}\right)\begin{pmatrix}
			\frac{\sqrt{2}}{2}+\epsilon&0&0&0\\
			0&\frac{\sqrt{2}}{2}&0&0\\
			0&0&\frac{\sqrt{2}}{2}\epsilon&0\\
			0&0&-\epsilon&1\\
		\end{pmatrix},   
		\\
		\boldsymbol{B}&=\boldsymbol{V}({\sf \Sigma}_{\boldsymbol{B}}, \ 0)\boldsymbol{X}
		=\left(\begin{array}{ccc}
			1&{\bf j}\epsilon&0\\
			{\bf j}\epsilon&1&0\\
			0&0&1
		\end{array}\right)
		\left(\begin{array}{ccc|c}
			\epsilon&0&0&0\\
			0&\frac{\sqrt{2}}{2}&0&0\\
			0&0&1&0
		\end{array}\right)\begin{pmatrix}
			\frac{\sqrt{2}}{2}+\epsilon&0&0&0\\
			0&\frac{\sqrt{2}}{2}&0&0\\
			0&0&\frac{\sqrt{2}}{2}\epsilon&0\\
			0&0&-\epsilon&1\\
		\end{pmatrix}.
	\end{aligned}
\end{equation*}


In the next example we demonstrate the other type of the quotient-type singular value decomposition of $\{{\boldsymbol A}, {\boldsymbol B}\}$, which contain a nonsingular dual quaternion matrix ${\boldsymbol X}$. 

\begin{example}\label{examp2}
	Find the quotient-type singular value decomposition of the dual quaternion matrix pair $\{{\boldsymbol A}, {\boldsymbol B}\}$, where
	\begin{equation}\label{ex2AB2}
		\boldsymbol{A}=\begin{pmatrix}
			\frac{\sqrt{2}}{2}+\epsilon&\frac{1}{2}{\bf k}\epsilon&0&0&0\\
			\frac{\sqrt{2}}{2}{\bf k}\epsilon&\frac{1}{2}&0&0&0\\
			0&0&\frac{\sqrt{2}}{2}\epsilon&\epsilon&0
		\end{pmatrix}, \quad
		\boldsymbol{B}=\begin{pmatrix}
			\frac{\sqrt{2}}{2}\epsilon&\frac{1}{2}{\bf j}\epsilon&0&0&\epsilon\\
			0&\frac{1}{2}&0&0&0\\
			0&0&\frac{\sqrt{2}}{2}-\epsilon&\frac{\sqrt{2}}{2}\epsilon&0
		\end{pmatrix}.
	\end{equation}
\end{example}

{\it Solution.}
Similar to Example \ref{examp1}, according to \cref{lm:4.1} there exist two unitary dual quaternion matrices 
\begin{equation*}
	\boldsymbol{P}=\begin{pmatrix}
		1&(\frac{\sqrt{2}}{2}{\bf k}-1)\epsilon&0&0&-\epsilon&-\frac{\sqrt{2}}{2}{\bf k}\epsilon\\
		({\bf k}+\frac{\sqrt{2}}{2})\epsilon&\frac{\sqrt{2}}{2}&0&0&0&-\frac{\sqrt{2}}{2}\\
		0&0&\epsilon&1&0&0\\
		\epsilon&\frac{\sqrt{2}}{2}{\bf j}\epsilon&0&0&1&\frac{\sqrt{2}}{2}{\bf j}\epsilon\\
		\frac{\sqrt{2}}{2}\epsilon&\frac{\sqrt{2}}{2}&0&0&{\bf j}\epsilon&\frac{\sqrt{2}}{2}\\
		0&0&1&-\epsilon&0&0
	\end{pmatrix}
\end{equation*} and 
\begin{equation}\label{Ex2Q}
	\boldsymbol{Q}=\begin{pmatrix}
		1&-\epsilon&0&0&0\\
		\epsilon&1&0&0&0\\
		0&0&1&-\epsilon&0\\
		0&0&\epsilon&1&0\\
		0&0&0&0&1
	\end{pmatrix},
\end{equation} 
such that the DQSVD of $\boldsymbol{C}=\begin{pmatrix}
	\boldsymbol{A}\\
	\boldsymbol{B}
\end{pmatrix}$ is
\begin{equation}\label{Ex1DQsvd}
	\boldsymbol{P}^{*}\boldsymbol{C}\boldsymbol{Q}
	=\left(\begin{array}{c}
		{\sf \Sigma}_{\boldsymbol{C}}  \\
		\hline
		0  \end{array}\right)
	\equiv
	\left(\begin{array}{cc}
		{\sf \Sigma}_{t}&0  \\
		0&{\Sigma}_{s}\epsilon   \\
		\hline
		0&0  \\
	\end{array}\right)
	=\left(\begin{array}{ccccc}
		\frac{\sqrt{2}}{2}+\epsilon&0&0&0&0\\
		0&\frac{\sqrt{2}}{2}&0&0&0\\
		0&0&\frac{\sqrt{2}}{2}-\epsilon&0&0\\
		0&0&0&\epsilon&0\\
		0&0&0&0&\epsilon\\
		\hline
		0&0&0&0&0
	\end{array}\right),
\end{equation}
from which we have  ${\sf \Sigma}_{\boldsymbol{C}} ={\rm diag}({\sf \Sigma}_{t},{\Sigma}_{s}\epsilon)$ and
\begin{equation}\label{Sigts}
	{\sf \Sigma}_{t} ={\rm diag}\left(\frac{\sqrt{2}}{2}+\epsilon, \frac{\sqrt{2}}{2}, \frac{\sqrt{2}}{2}-\epsilon\right), \quad {\Sigma}_{s} =I_2.
\end{equation}
Here, we have used \eqref{SVDP} with $s=2,t=3$.	
Partition $\boldsymbol{P}$ as the following 2-by-3 blocked matrix
\begin{equation*}
	\boldsymbol{P}=\begin{pmatrix}
		\hat{\boldsymbol{P}}_{11}&  \hat{\boldsymbol{P}}_{12}& \hat{\boldsymbol{P}}_{13}\\
		\hat{\boldsymbol{P}}_{21}& \hat{\boldsymbol{P}}_{22}& \hat{\boldsymbol{P}}_{23}
	\end{pmatrix}=
	\left(\begin{array}{ccc|cc|c}
		1&(\frac{\sqrt{2}}{2}{\bf k}-1)\epsilon&0&0&-\epsilon&-\frac{\sqrt{2}}{2}{\bf k}\epsilon\\
		({\bf k}+\frac{\sqrt{2}}{2})\epsilon&\frac{\sqrt{2}}{2}&0&0&0&-\frac{\sqrt{2}}{2}\\
		0&0&\epsilon&1&0&0\\
		\hline
		\epsilon&\frac{\sqrt{2}}{2}{\bf j}\epsilon&0&0&1&\frac{\sqrt{2}}{2}{\bf j}\epsilon\\
		\frac{\sqrt{2}}{2}\epsilon&\frac{\sqrt{2}}{2}&0&0&{\bf j}\epsilon&\frac{\sqrt{2}}{2}\\
		0&0&1&-\epsilon&0&0\\
	\end{array}\right).
\end{equation*}
From \cref{coroCS}  the DQCS decomposition of $\begin{pmatrix}
	\hat{\boldsymbol{P}}_{11}\\
	\hat{\boldsymbol{P}}_{21}
\end{pmatrix}$ 
is derived as follows
\begin{equation}\label{Ex1P12}
	\begin{aligned}	
		\begin{pmatrix}
			\hat{\boldsymbol{P}}_{11}\\
			\hat{\boldsymbol{P}}_{21}
		\end{pmatrix}
		&=\begin{pmatrix}
			\hat{\boldsymbol{U}}& 0  \\
			0  & \hat{\boldsymbol{V}}
		\end{pmatrix}\begin{pmatrix}
			\hat{{\sf \Sigma}}_{\boldsymbol{A}}\\
			\hat{{\sf \Sigma}}_{\boldsymbol{B}}
		\end{pmatrix}\hat{\boldsymbol{W}}^{*}\\
		&=\left(\begin{array}{ccc|ccc}
			1&{\bf k}\epsilon&0&0&0&0\\
			{\bf k}\epsilon&1&0&0&0&0\\
			0&0&1&0&0&0\\
			\hline
			0&0&0&1&{\bf j}\epsilon&0\\
			0&0&0&{\bf j}\epsilon&1&0\\
			0&0&0&0&0&1
		\end{array}\right)
		\begin{pmatrix}
			1&0&0\\
			0&\frac{\sqrt{2}}{2}&0\\
			0&0&\epsilon\\
			\hline
			\epsilon&0&0\\
			0&\frac{\sqrt{2}}{2}&0\\
			0&0&1
		\end{pmatrix}\begin{pmatrix}
			1&-\epsilon&0\\
			\epsilon&1&0\\
			0&0&1
		\end{pmatrix}.
	\end{aligned}
\end{equation}
Then, from \eqref{XXhat}	 we obtain a nonsingular dual quaternion matrix
\begin{equation}\label{ex1X}
	\hat{\boldsymbol{X}}={\rm diag}(\hat{{\boldsymbol{W}}}^*{\sf \Sigma}_{t},\Sigma_{s},I_{n-k})\boldsymbol{Q}^{*}=\begin{pmatrix}
		\frac{\sqrt{2}}{2}+\epsilon&0&0&0&0\\
		0&\frac{\sqrt{2}}{2}&0&0&0\\
		0&0&\frac{\sqrt{2}}{2}-\epsilon&\frac{\sqrt{2}}{2}\epsilon&0\\
		0&0&-\epsilon&1&0\\
		0&0&0&0&1
	\end{pmatrix},  
\end{equation}
and
\begin{subequations}\label{ex1NANB}
	\begin{align}
		\boldsymbol{N}_{\boldsymbol{A}}&=\hat{\boldsymbol{U}}^\ast \hat{\boldsymbol{P}}_{12}=\left(\begin{array}{ccc}
			1&{\bf k}\epsilon&0\\
			{\bf k}\epsilon&1&0\\
			0&0&1
		\end{array}\right)^\ast \left(\begin{array}{cc}
			0&-\epsilon\\
			0&0\\
			1&0
		\end{array}\right)=\begin{pmatrix}
			0&-\epsilon\\
			0&0\\
			1&0
		\end{pmatrix},  \\
		\boldsymbol{N}_{\boldsymbol{B}}&=\hat{\boldsymbol{V}}^\ast \hat{\boldsymbol{P}}_{22}=
		\left(\begin{array}{ccc}
			1&{\bf j}\epsilon&0\\
			{\bf j}\epsilon&1&0\\
			0&0&1
		\end{array}\right)^\ast 
		\left(\begin{array}{cc}
			0&1\\
			0&{\bf j}\epsilon  \\
			-\epsilon&0  \\
		\end{array}\right)=\begin{pmatrix}
			0&1\\
			0&0\\
			-\epsilon&0
		\end{pmatrix}.
	\end{align}
\end{subequations}

Combining \eqref{ex1X}-\eqref{ex1NANB} with the submatrices in \eqref{Ex1P12} we obtain the DQGSVD1 of $\{\boldsymbol{A}, \boldsymbol{B}\}$
in the form of \eqref{eq:AB2}, that is,
\begin{equation*}
	\begin{aligned}	
	\boldsymbol{A}&=\hat{\boldsymbol{U}}(\hat{\sf \Sigma}_{\boldsymbol{A}},  \boldsymbol{N}_{\boldsymbol{A}}\epsilon)\hat{\boldsymbol{X}}\\
	&=\left(\begin{array}{ccc}
		1&{\bf k}\epsilon&0\\
		{\bf k}\epsilon&1&0\\
		0&0&1
	\end{array}\right)
	\left(\begin{array}{ccc|cc}
		1&0&0&0&0\\
		0&\frac{\sqrt{2}}{2}&0&0&0\\
		0&0&\epsilon&\epsilon &0
	\end{array}\right)\begin{pmatrix}
		\frac{\sqrt{2}}{2}+\epsilon&0&0&0&0\\
		0&\frac{\sqrt{2}}{2}&0&0&0\\
		0&0&\frac{\sqrt{2}}{2}-\epsilon&\frac{\sqrt{2}}{2}\epsilon&0\\
		0&0&-\epsilon&1&0\\
		0&0&0&0&1
	\end{pmatrix},
	\end{aligned}
\end{equation*}
	\begin{equation*}
		\begin{aligned}
			\boldsymbol{B}&=\hat{\boldsymbol{V}}(\hat{{\sf \Sigma}}_{\boldsymbol{B}},\boldsymbol{N}_{\boldsymbol{B}}\epsilon)\hat{\boldsymbol{X}}\\
			&=\left(\begin{array}{ccc}
			1&{\bf j}\epsilon&0\\
			{\bf j}\epsilon&1&0\\
			0&0&1
		\end{array}\right)
		\left(\begin{array}{ccc|cc}
			\epsilon&0&0&0&\epsilon\\
			0&\frac{\sqrt{2}}{2}&0&0&0\\
			0&0&1&0 &0
		\end{array}\right)\begin{pmatrix}
			\frac{\sqrt{2}}{2}+\epsilon&0&0&0&0\\
			0&\frac{\sqrt{2}}{2}&0&0&0\\
			0&0&\frac{\sqrt{2}}{2}-\epsilon&\frac{\sqrt{2}}{2}\epsilon&0\\
			0&0&-\epsilon&1&0\\
			0&0&0&0&1
		\end{pmatrix}.
		\end{aligned}
	\end{equation*}


In the following example, we verify the principle of DQGSVD2, another quotient-type singular value decomposition of the dual quaternion matrix pair $\{{\boldsymbol A}, {\boldsymbol B}\}$. 

\begin{example}\label{examp3}	
	Find DQGSVD2 of the dual quaternion matrix pair $\{\boldsymbol{A}, \boldsymbol{B}\}$, where $\boldsymbol{A}$ and $\boldsymbol{B}$ are given in \eqref{ex2AB2}.
\end{example}

{\it Solution.} We begin with the DQSVD of $\boldsymbol{C}=\begin{pmatrix}
	\boldsymbol{A}\\
	\boldsymbol{B}
\end{pmatrix}$ 	
in \eqref{Ex1DQsvd}. Recalling $s=2, t=3$, partition the matrix $\boldsymbol{Q}$ in \eqref{Ex2Q} to be
$\boldsymbol{Q}=(\boldsymbol{Q}_{1},\ \boldsymbol{Q}_{2})$ with $\boldsymbol{Q}_{1}\in{\bf \mathbb{DQ}}^{5 \times 3}$, $\boldsymbol{Q}_{2}\in{\bf \mathbb{DQ}}^{5 \times 2}$, that is,
\begin{equation}\label{Ex3Q}
	\boldsymbol{Q}=(\boldsymbol{Q}_{1},\ \boldsymbol{Q}_{2})=\left(\begin{array}{ccc|cc}
		1&-\epsilon&0&0&0\\
		\epsilon&1&0&0&0\\
		0&0&1&-\epsilon&0\\
		0&0&\epsilon&1&0\\
		0&0&0&0&1
	\end{array}
	\right).
\end{equation} 

Combining \eqref{Sigts} and from \eqref{DQGSVDeq1} and \eqref{DQGSVDeq2} we have
\begin{equation*}
	\begin{aligned}
		\boldsymbol{A}_{1}&=\boldsymbol{A}\boldsymbol{Q}_{1}{\sf \Sigma}_{t}^{-1}=
		\begin{pmatrix}
			1&(\frac{\sqrt{2}}{2}{\bf k}-1)\epsilon&0\\
			({\bf k}+\frac{\sqrt{2}}{2})\epsilon&\frac{\sqrt{2}}{2}&0\\
			0&0&\epsilon\\
		\end{pmatrix},\\	
		\boldsymbol{B}_{1}&=\boldsymbol{B}\boldsymbol{Q}_{1}{\sf \Sigma}_{t}^{-1}=
		\begin{pmatrix}
			\epsilon&\frac{\sqrt{2}}{2}{\bf j}\epsilon&0\\
			\frac{\sqrt{2}}{2}\epsilon&\frac{\sqrt{2}}{2}&0\\
			0&0&1\\
		\end{pmatrix}.
	\end{aligned}
\end{equation*}

From \eqref{DQGSVDeq3} the DQSVD on $\boldsymbol{A}_{1}$ gives
\begin{equation*}
	\hat{\boldsymbol{U}}^{*}{\boldsymbol{A}_{1}}{\boldsymbol{W}}=\hat{{\sf \Sigma}}_{\boldsymbol{A}}=
	\begin{pmatrix}
		1&0&0\\
		0&\frac{\sqrt{2}}{2}&0\\
		0&0&\epsilon\\
	\end{pmatrix}
\end{equation*}
where
\begin{equation}\label{Ex3hatUW}
	\hat{\boldsymbol{U}}=\begin{pmatrix}
		1&{\bf k}\epsilon&0\\
		{\bf k}\epsilon&1&0\\
		0&0&1
	\end{pmatrix},\quad 
	\boldsymbol{W}=\begin{pmatrix}
		1&\epsilon&0\\
		-\epsilon&1&0\\
		0&0&1
	\end{pmatrix}.
\end{equation}

From \eqref{Bgsv} there exists a unitary dual quaternion matrix 
\begin{equation*}
	\hat{\boldsymbol{V}}=\begin{pmatrix}
		1&{\bf j}\epsilon&0\\
		{\bf j}\epsilon&1&0\\
		0&0&1
	\end{pmatrix}
\end{equation*}
such that
\begin{equation}\label{Ex3Bgsv}
	\hat{\boldsymbol{V}}^{*}\boldsymbol{B}_{1}\boldsymbol{W}=\begin{pmatrix}
		\epsilon&0&0\\
		0&\frac{\sqrt{2}}{2}&0\\
		0&0&1
	\end{pmatrix}.
\end{equation}

Recalling the formula \eqref{Bgsv}, we have ${\bf T}=1$ in \eqref{Ex3Bgsv}. As a result, we can omit steps \eqref{Tsvd}-\eqref{PB1W}, and directly overwrite 
\begin{equation*}
	\boldsymbol{U}=\hat{\boldsymbol{U}}, \quad \boldsymbol{V}=\hat{\boldsymbol{V}}
\end{equation*}
in \eqref{GsvXU} and ${\sf \Sigma}_{\boldsymbol{A}}=\hat{{\sf \Sigma}}_{\boldsymbol{A}}$ in \eqref{pai}, and obtain a singular dual quaternion matrix
\begin{equation*}
	\boldsymbol{X}=\boldsymbol{Q}\begin{pmatrix}
		{\sf {\Sigma}}_{t}^{-1}\boldsymbol{W}& 0 \\
		0 & 0  \\
	\end{pmatrix}=\begin{pmatrix}
		\sqrt{2}-2\epsilon&0&0&0&0\\
		0&\sqrt{2}&0&0&0\\
		0&0&\sqrt{2}+2\epsilon&0&0\\
		0&0&\sqrt{2}\epsilon&0&0\\
		0&0&0&0&0
	\end{pmatrix}
\end{equation*}
with the aid of $\boldsymbol{Q}$ in \eqref{Ex3Q}, $\boldsymbol{W}$ in \eqref{Ex3hatUW} and ${\sf {\Sigma}}_{t}$ in \eqref{Sigts}.
The dual matrix on the right-hand side of \eqref{Ex3Bgsv} gives ${\sf \Sigma}_{\boldsymbol{B}}$.

In summary, we obtain the DQGSVD2 of $\{\boldsymbol{A}, \boldsymbol{B}\}$
in the form of \eqref{eq:AB3}, that is,
\begin{equation*}
	\begin{aligned}
		\boldsymbol{U}^{*}\boldsymbol{A}\boldsymbol{X} &=
		\begin{pmatrix}
			1&{\bf k}\epsilon&0\\
			{\bf k}\epsilon&1&0\\
			0&0&1
		\end{pmatrix}^\ast 
		\begin{pmatrix}
			\frac{\sqrt{2}}{2}+\epsilon&\frac{1}{2}{\bf k}\epsilon&0&0&0\\
			\frac{\sqrt{2}}{2}{\bf k}\epsilon&\frac{1}{2}&0&0&0\\
			0&0&\frac{\sqrt{2}}{2}\epsilon&\epsilon&0
		\end{pmatrix}
		\begin{pmatrix}
			\sqrt{2}-2\epsilon&0&0&0&0\\
			0&\sqrt{2}&0&0&0\\
			0&0&\sqrt{2}+2\epsilon&0&0\\
			0&0&\sqrt{2}\epsilon&0&0\\
			0&0&0&0&0
		\end{pmatrix}  \\
		&=\left(\begin{array}{ccc|cc}
			1&0&0&0&0\\
			0&\frac{\sqrt{2}}{2}&0&0&0\\
			0&0&\epsilon&0&0\\
		\end{array}\right) = ({\sf \Sigma}_{\boldsymbol{A}}, 0),  \\
		\boldsymbol{V}^{*}\boldsymbol{B}\boldsymbol{X} &=
		\begin{pmatrix}
			1&{\bf j}\epsilon&0\\
			{\bf j}\epsilon&1&0\\
			0&0&1
		\end{pmatrix}^\ast 
		\begin{pmatrix}
			\frac{\sqrt{2}}{2}\epsilon&\frac{1}{2}{\bf j}\epsilon&0&0&\epsilon\\
			0&\frac{1}{2}&0&0&0\\
			0&0&\frac{\sqrt{2}}{2}-\epsilon&\frac{\sqrt{2}}{2}\epsilon&0
		\end{pmatrix}
		\begin{pmatrix}
			\sqrt{2}-2\epsilon&0&0&0&0\\
			0&\sqrt{2}&0&0&0\\
			0&0&\sqrt{2}+2\epsilon&0&0\\
			0&0&\sqrt{2}\epsilon&0&0\\
			0&0&0&0&0
		\end{pmatrix}  \\
		&=\left(\begin{array}{ccc|cc}
			\epsilon&0&0&0&0\\
			0&\frac{\sqrt{2}}{2}&0&0&0\\
			0&0&1&0&0\\
		\end{array}\right) = ({\sf \Sigma}_{\boldsymbol{B}}, 0).
	\end{aligned}
\end{equation*}

\section{Conclusion}\label{section6}

In this paper we have investigated the Householder transformation, QR decomposition, and DQCS decomposition of dual quaternion matrices. Based on these theoretical results, we mainly focus on the generalized singular value decomposition of a dual quaternion matrix pair in accordance with their dimensions. According to the rank or appreciable rank of the dual quaternion matrices, we have derived two forms of the DQGSVD, which afford significant importance for deeply understanding the properties of dual quaternion matrices and their applications in related fields. Except for these,
we have proposed the DQPSVD and the DQCCD decompositions of dual quaternion matrix pair yet. The various types of DQGSVD decompositions of the dual quaternion matrix triplet $\{{\boldsymbol A},{\boldsymbol B}, {\boldsymbol C}\}$ are our ongoing research work. We are committed to sharing the corresponding theoretical results in a forthcoming paper. We believe that the theoretical results derived in this paper have great potential applications for solving problems related to dual quaternion matrix equations, such as robot hand-eye calibration problem and others, which deserve our further investigation.

\section*{Acknowledgements}
This work was supported by the Fundamental Research Funds for the Central Universities
	(Grant No. 2024ZDPYCH1004).

\





\begin{thebibliography}{00}


\bibitem{h6}
F. Alvarruiz, C. Campos, and J.E. Roman, Thick-restarted joint
lanczos bidiagonalization for the GSVD, \emph{J. Comput. Appl.
	Math.}, \textbf{440}  (2024), 115506.


\bibitem{ZBai}
Z. Bai, \emph{The CSD, GSVD, Their Applications and Computations}, Technical report IMA preprint series 958, Institute for Mathematics and Its Applications, University of Minnesota, 1992.

\bibitem{Bradley2019}
M.W. Bradley,  K.A. Aiello, S.P. Ponnapalli, H.A. Hanson, and O. Alter, GSVD- and tensor GSVD-uncovered patterns of DNA copy-number alterations predict adenocarcinomas survival in general and in response to platinum, \emph{APL Bioeng.}, \textbf{3}:3  (2019), 036104. 

\bibitem{s28}
\AA. Bj{\"o}rck and G.H. Golub, Numerical methods for computing angles between linear subspaces, \emph{Math. Comput.}, \textbf{27}:123 (1973),  579-594.

\bibitem{s7}
G. Brambley and J. Kim, Unit dual quaternion-based pose optimisation for visual runway observations, \emph{IET Cyber-Systems and Robotics}, {\bf 2} (2020),   181-189.

\bibitem{cao2015}
L. Cao, Z. Ju, J. Li,  R. Jian, and C. Jiang, Sequence detection analysis based on canonical correlation for steady-state visual evoked potential brain computer interfaces, \emph{J. Neurosci. Meth.}, \textbf{253}:30 (2015), 10-17.

\bibitem{Cempel}
C. Cempel, Generalized singular value decomposition in multidimensional condition monitoring of machines--A proposal of comparative
diagnostics, \emph{Mech. Syst. Signal Pr.}, \textbf{23}:3 (2009), 701-711. 

\bibitem{Chen2019}
J. Chen, G. Wang, and G.B. Giannakis, Nonlinear dimensionality reduction for discriminative analytics of multiple datasets, \emph{IEEE
	Trans. Signal Process.}, \textbf{67}:3  (2018), 740-752.

\bibitem{s4}
Z. Chen, C. Ling, L. Qi, and H. Yan, A regularization-patching dual quaternion optimization method for solving the hand-eye calibration problem, \emph{J. Optimiz. Theory Appl.}, \textbf{200} (2024), 1193-1215.

\bibitem{Chu2000}
D. Chu, L. De Lathauwer, and B. De Moor, On the computation of the
restricted singular value decomposition via the cosine-sine decomposition,
\emph{SIAM J. Matrix Anal. Appl.}, {\bf 22}:2  (2000), 580-601.

\bibitem{CJSC2024}
C. Cui and L. Qi, A power method for computing the dominant
eigenvalue of a dual quaternion Hermitian matrix, \emph{J. Sci. Comput.}, {\bf 100}  (2024), 21.

\bibitem{s19}
B.L.R. De Moor and G.H. Golub, The restricted singular value
decomposition: properties and applications, \emph{SIAM J. Matrix Anal.
	Appl.}, {\bf 12}:3  (1991), 401-425.

\bibitem{s17}
W.X. Ding, Y. Li,  T. Wang, and M. Wei, Dual quaternion singular value
decomposition based on bidiagonalization to a dual number matrix using dual
quaternion householder transformations,  \emph{Appl. Math. Lett.}, {\bf 152}  (2024), 109021. 

\bibitem{Drmac1998}
Z. Drmac, Accurate computation of the product-induced singular value
decomposition with applications, \emph{SIAM J. Numer. Anal.}, {\bf 35}:5  (1998), 1969-1994.

\bibitem{Drmac2000}
Z. Drmac, New accurate algorithms for singular value decomposition
of matrix triplets, \emph{SIAM J. Matrix Anal. Appl.}, {\bf 21}:3  (2000), 1026-1050. 

\bibitem{Fernando}
K.V. Fernando and S. Hammarling,  A product induced singular value decomposition
($\Pi$ SVD) for two matrices and balanced realization,  \emph{Linear Algebra in Signals, Systems, and Control}, {\bf 32} (1988),
128.

\bibitem{GolubZha1994}
G.H. Golub and H. Zha, Perturbation analysis of the canonical correlations of matrix pairs,   \emph{Linear Algebra Appl.}, {\bf 210} (1994), 3-28. 

\bibitem{s3}
H. Gui and G. Vukovich,  Dual-quaternion-based adaptive motion tracking of spacecraft with reduced control effort,   \emph{Nonlinear Dynamics}, {\bf 83}  (2016), 597-614.

\bibitem{Heath}
M.T. Heath, A.J. Laub, C.C. Paige, and R.C. Ward,  Computing the singular
value decomposition of a product of two matrices, \emph{SIAM J. Sci. Stat. Comput.}, {\bf 7}:4 (1986), 1147-1159. 

\bibitem{Howland}
P. Howland, M. Jeon, and H. Park, Structure preserving dimension reduction
for clustered text data based on the generalized singular value
decomposition, \emph{SIAM J. Matrix Anal. Appl.}, {\bf 25}:1 (2003), 165-179.

\bibitem{ZHu2024}
Z.H. Hu, S.T. Ling, S.T. and Z.G. Jia,  Structure-preserving joint
lanczos bidiagonalization with thick-restart for the partial quaternion
GSVD,  \emph{Numer. Algorithms},  (2024).  DOI: https://doi.org/10.1007/s11075-024-01900-1

\bibitem{h25}
Z. Jia and H. Li, The joint bidiagonalization method for large GSVD
computations in finite precision, \emph{SIAM J. Matrix Anal.
	Appl.}, {\bf 44}:1  (2023), 382-407.

\bibitem{Jia2019}
Z.G. Jia,  M.K. Ng, and G.J. Song, Lanczos method for large-scale
quaternion singular value decomposition, \emph{Numer. Algorithms}, {\bf 82}:2 (2019),
699-717. 

\bibitem{jiang2006}
T. Jiang, Y. Liu, and M. Wei,  Quaternion generalized singular value
decomposition and its applications, \emph{Appl. Math.-A Journal of Chinese
	Univ.}, {\bf 21} (2006), 113-118. 

\bibitem{s2}
A. Li, L. Wang, and D., Wu,  Simultaneous robot-world and hand-eye
calibration using dual-quaternions and Kronecker product, \emph{Int. J. Phys. Sci.},
{\bf 5}:10 (2010), 1530-1536.

\bibitem{Li2014}
Y. Li, M. Wei, F. Zhang, and J. Zhao, A fast structure-preserving
method for computing the singular value decomposition of quaternion
matrices, \emph{Appl. Math. Comput.}, {\bf 235}:25  (2014), 157-167.

\bibitem{LiaoBai2005}
A.P. Liao, Z.Z. Bai, and Y. Lei, Best approximate solution of matrix equation $AXB+CYD=E$,
\emph{SIAM J. Matrix Anal. Appl.}, {\bf 27}:3 (2005), 675-688.

\bibitem{s11}
C. Ling, H. He, and L. Qi, Singular values of dual quaternion
matrices and their low-rank approximations, \emph{Numer. Func. Anal. Optimiz.}, {\bf 43}:12 (2022), 1423-1458.

\bibitem{s10}
C. Ling, H. He, L. Qi, and T. Feng,  Spectral norm and von Neumann type trace
inequality for dual quaternion matrices, \emph{Pacific J. Optimiz.}, {\bf 20}:2 (2024),  229-247. 

\bibitem{LLJ2022}
Q.H. Liu, S.T. Ling, and Z.G. Jia,  Randomized quaternion singular value decomposition for low-rank matrix approximation,
\emph{SIAM J. Sci. Comput.}, {\bf 44}:2  (2022), A870-A900.

\bibitem{paige1986}
C.C. Paige, Computing the generalized singular value decomposition,
\emph{SIAM J. Sci. Stat. Comput.}, {\bf 7}:4 (1986), 1126-1146. 

\bibitem{page1981}
C.C. Paige and M.A. Saunders,  Towards a generalized singular value
decomposition,  \emph{SIAM J. Numer. Anal.}, {\bf 18}:3  (1981), 398-405.

\bibitem{1994History}
C.C. Paige, M. Wei,  History and generality of the CS
decomposition, \emph{Linear Algebra Appl.}, {\bf 208-209}:1-15  (1994), 303-326.

\bibitem{s15}
L. Qi, C. Ling, and H. Yan, Dual quaternions and dual quaternion
vectors,  \emph{Commun. Appl. Math. Comput.}, {\bf 4} (2022), 1494-1508. 

\bibitem{s12}
L. Qi and Z. Luo,  Eigenvalues and singular values of dual quaternion
matrices, \emph{Pacific J. Optimiz.}, {\bf 19}:2  (2023), 257-272.

\bibitem{QWL2023}
L. Qi, X. Wang, and Z. Luo,  Dual quaternion matrices in multi-agent
formation control,  \emph{Commun. Math. Sci.}, {\bf 21} (2023), 1865-1874.

\bibitem{s6}
B. Razgus, E. Mooij, and D. Choukroun, Relative navigation in
asteroid missions using dual quaternion filtering, \emph{J. Guid.
	Control Dynam.}, {\bf 40}:9 (2017), 2151-2166. 

\bibitem{s16}
G.W. Stewart, Computing the CS decomposition of a partitioned
orthonormal matrix, \emph{Numerische Mathematik}, {\bf 40} (1982), 297-306.

\bibitem{s13}
C.F.  Van Loan,  Generalizing the singular value decomposition, \emph{SIAM
	J. Numer. Anal.}, {\bf 13}:1 (1976), 76-83.  

\bibitem{s1}
X. Wang,J. Huang, and H. Song,  Simultaneous robot--world and
hand--eye calibration based on a pair of dual equations, \emph{Measurement}, {\bf 181}  (2021), 109623.

\bibitem{s9}
T. Wei, W. Ding, and Y. Wei, Singular value decomposition of dual
matrices and its application to traveling wave identification in the brain,
\emph{SIAM J. Matrix Anal. Appl.}, {\bf 45}:1 (2024), 634-660.  

\bibitem{XuWei1998}
G. Xu, M. Wei, and D. Zheng, On solutions of matrix equation $AXB+CYD=F$,
\emph{Linear Algebra Appl.}, {\bf 279}:1-3 (1998), 93-109.


\bibitem{s20}
H. Zha, The restricted singular value decomposition of matrix triplets, \emph{SIAM J. Matrix Anal. Appl.}, {\bf 12}:1  (1991), 172-194.

\bibitem{Zha1996}
H. Zha, Computing the generalized singular values/vectors of large
sparse or structured matrix pairs,  \emph{Numerische Mathematik}, {\bf 72} (1996), 391-417. 


\bibitem{DZhang2024}
D. Zhang, T. Jiang, C. Jiang, and G. Wang,  A complex
structure-preserving algorithm for computing the singular value decomposition
of a quaternion matrix and its applications,  \emph{Numer. Algorithms}, {\bf 95} (2024), 267-283.  


\bibitem{Zhang1997}
F. Zhang,  Quaternions and matrices of quaternions,  \emph{Linear Algebra Appl.}, {\bf 251} (1997), 21-57. 

\bibitem{Zhang2021}
L. Zhang, Y. Wei, and E. K.-W. Chu,  Neural network for computing
GSVD and RSVD, \emph{Neurocomputing}, {\bf 444} (2021), 59-66.

\end{thebibliography}



\end{document}